\newcommand{\mc}{\mathcal}
\newcommand{\mbb}{\mathbb}
\newcommand{\mrm}{\mathrm}
\newcommand{\mbf}{\mathbf}
\newcommand{\0}{\emptyset}
\newcommand{\al}{\alpha}
\newcommand{\be}{\beta}
\newcommand{\ga}{\gamma}
\newcommand{\lam}{\lambda}
\newcommand{\eps}{\varepsilon}
\newcommand{\de}{\delta}
\newcommand{\om}{\omega}
\newcommand{\Om}{\Omega}
\newcommand{\la}{\langle}
\newcommand{\ra}{\rangle}
\newcommand{\NN}{\mathbb{N}}
\newcommand{\clrest}{\!\upharpoonright\!}
\newcommand{\wt}{\widetilde}
\newcommand{\dom}{\mathrm{dom}}
\newcommand{\bc}{\begin{center}}
\newcommand{\ec}{\end{center}}
\newtheorem*{nnclaim}{Claim}
\newtheorem{thm}{Theorem}[section]
\newtheorem{lem}[thm]{Lemma}
\newtheorem{prop}[thm]{Proposition}
\newtheorem{cor}[thm]{Corollary}
\newtheorem{fact}[thm]{Fact}
\theoremstyle{definition}
\newtheorem{que}[thm]{Question}
\newtheorem{df}[thm]{Definition}
\newtheorem{exa}[thm]{Example}
\newtheorem{rem}[thm]{Remark}
\newtheorem{notation}[thm]{Notation}
\title{The Zoo of combinatorial Banach spaces}
\author[P. Borodulin--Nadzieja]{Piotr Borodulin--Nadzieja}
\address[Piotr Borodulin-Nadzieja]{Mathematical Institute, University of Wroc\l aw, Pl. Grunwaldzki 2, 50-384 Wroc\l aw, Poland}
\email{pborod@math.uni.wroc.pl}
\author[B. Farkas]{Barnab\'as Farkas}
\address[Barnab\'as Farkas]{DMG/Algebra, TU Wien, Wiedner Hauptstrasse 8-10/104, 1040 Vienna, Austria}
\email{barnabasfarkas@gmail.com}
\author[S. Jachimek]{Sebastian Jachimek}
\address[Sebastian Jachimek]{Mathematical Institute, University of Wroc\l aw, Pl. Grunwaldzki 2, 50-384 Wroc\l aw, Poland}
\email{sebastian.jachimek@math.uni.wroc.pl}
\author[A. Pelczar--Barwacz]{Anna Pelczar--Barwacz}
\address[Anna Pelczar-Barwacz]{Institute of Mathematics, Faculty of Mathematics and Computer Science, Jagiellonian University, \L ojasiewicza 6, 30-348 Krak\'ow, Poland}
\email{anna.pelczar@uj.edu.pl}
\thanks{B. Farkas was supported by the Austrian Science Fund (FWF) project no. I 5918. P. Borodulin-Nadzieja and S. Jachimek were supported by the 
 Polish National Science Center under the Weave-UNISONO call in the
Weave programme, no. 2021/03/Y/ST1/00124. A. Pelczar-Barwacz was supported by the grant of the National Science Centre, Poland, no. 2020/39/B/ST1/01042}
\subjclass[2020]{03E05, 03E15, 03E75, 46B03, 46B25, 46B45}
\keywords{families of finite sets, combinatorial Banach spaces, classical sequence spaces, Schreier spaces, Schur property, saturation, extreme points of the dual ball, stopping time Banach space, Farah families, universal combinatorial spaces, Pe\l czy\'nski's universal space, complete
coanalytic families} 
\begin{document}

\maketitle

\begin{abstract} We study Banach spaces induced by families of finite sets in the most natural (Schreier-like) way, that is, we consider the completion $X_\mc{F}$ of $c_{00}$ with respect to the norm $\sup\{\sum_{k\in F}|x(k)|:F\in\mc{F}\}$ where $\mc{F}$ is an arbitrary (not necessarily compact) family of finite sets covering $\mbb{N}$. 

Among other results, we discuss the following: 

(1) Structure theorems bonding the combinatorics of $\mc{F}$ and the geometry of $X_\mc{F}$ including possible characterizations and variants of the Schur property, $\ell_1$-saturation, and the lack of copies of $c_0$ in $X_\mc{F}$. 

	(2) A plethora of examples including a relatively simple $\ell_1$-saturated combinatorial space which does not satisfy the Schur property, as well as a new presentation of Pe\l czy\'nski's universal space. 

(3) The complexity of the family $\{H\subseteq\NN:X_{\mc{F}\upharpoonright H}$ does not contain $c_0\}$.
\end{abstract}

\section{Introduction}
Given a family $\mc{F}$ of finite sets covering $\mbb{N}=\{1,2,\dots\}$ (or any countable set), we define the \emph{extended norm} $\|\bullet\|_\mc{F}$ on $\mbb{R}^\mbb{N}$ and the space $X_\mc{F}$ as follows (see \cite{BNF20}):
\begin{align*} \|x\|_\mc{F}=&\sup\bigg\{\sum_{k\in F}|x(k)|:F\in\mc{F}\bigg\},\\
X_\mc{F}=&\big\{x\in\mbb{R}^\mbb{N}:\|P_{[n,\infty)}(x)\|_\mc{F}\xrightarrow{n\to\infty}0\big\},
\end{align*}
where $P_A\colon \mbb{R}^\mbb{N}\to\mbb{R}^\mbb{N}$ stands for the usual coordinate projection along the set $A\subseteq\mbb{N}$. Then $X_\mc{F}$ equipped with $\|\bullet\|_\mc{F}$ is a Banach space, the completion of $c_{00}$, and the canonical
algebraic basis $(e_n)$ of $c_{00}$ is a normalized $1$-unconditional basis in $X_\mc{F}$. 

The term ''combinatorial Banach space`` was coined by Gowers in \cite{Gowers-blog} to describe such spaces for $\mc{F}$ being compact, hereditary, and spreading. The combinatorial Banach spaces understood in this way were 
studied extensively by many authors (see e.g. \cite{Argyros98}, \cite{Jordi-Stevo}, \cite{Gasparis-Leung}, \cite{ExtremeKevin}). In this paper we study spaces of the form $X_\mc{F}$ in the general setting, assuming only that $\mc{F}$ consists of finite sets and covers $\mbb{N}$. We will call all such spaces \emph{combinatorial (Banach) spaces}.

\smallskip
For example, if $\mc{F}=[\NN]^{\leq 1}=\{F\subseteq\NN:|F|\leq 1\}$ then $X_\mc{F}=c_0$, if $\mc{F}=[\NN]^{<\infty}=\{F\subseteq\NN:F$ is finite$\}$ then $X_\mc{F}=\ell_1$, and, in some sense, all combinatorial spaces are simple amalgamations of $c_0$ and $\ell_1$. However, as we will see, there are many easily definable but quite interesting, sometimes really surprising examples in between the two classical sequence spaces.  The first non-trivial application (see \cite{Sch30})  of this construction was the \emph{Schreier space} $X_\mc{S}$ generated by the \emph{Schreier family} 
\[ \mc{S}=\{\0\}\cup \big\{F\subseteq \mbb{N}:|F|\leq\min(F)\big\}.\]
The space $X_\mc{S}$ was the first example of a Banach space without the weak Banach-Saks property. Later many variants of $X_\mc{S}$ were used in various constructions of peculiar Banach
spaces (see e.g. \cite{Alspach-Argyros}, \cite{Castillo93}, \cite{Castillo-Gonzales}). These results led to the study of combinatorial spaces generated by compact families of finite sets (see e.g. \cite{Jordi15}, \cite{Jordi-Stevo}), and more
recently, motivated by set-theoretic considerations, to investigations of combinatorial spaces generated by arbitrary families of finite sets (see \cite{BNF20}). For example, a well-known non-compact example is the family 
\[ \mc{A}=\big\{F\subseteq 2^{<\mbb{N}}:F\,\text{ is a finite antichain}\big\}.\] 
The space $X_\mathcal{A}$ is a peculiar alloy of $c_0$ and $\ell_1$: Copies of $c_0$ live on the branches and copies of $\ell_1$ live on the infinite antichains. This space is called the \emph{(dyadic) stopping time space} (see e.g. \cite{BangOdell}) and H. Rosenthal proved that it contains copies of all $\ell_p$ spaces for $1\leq p<\infty$ (see \cite[Section 6]{BangOdell} and \cite[Secton 7.6]{Dew}); quite interesting for a simple amalgamation of $c_0$ and $\ell_1$.  This space was also studied in \cite{BNF20} as a Banach space analog of the so-called \emph{trace of null ideal}. 

\smallskip 
It turned out that there is a deep interaction between the canonical structures involved in these constructions: The generating family $\mc{F}$, the space $X_\mc{F}$, and the collection of \emph{$\mc{F}$-ideals}, that is, families of the form 
\[ \mc{I}_{\mc{F},\tau}=\big\{A\subseteq\mbb{N}:P_A(\tau)\in X_\mc{F}\big\}\subseteq\mc{P}(\NN)\]
where $\mc{F}$ is as above and $\tau=(\tau_n)\in (0,\infty)^\mbb{N}$ is a ``weight sequence''. The family $\mc{I}_{\mc{F},\tau}$ is indeed an \emph{ideal} on $\mbb{N}$ (in the Boolean sense), that is, it covers $\mbb{N}$, it is hereditary, and is closed
under taking unions of finitely many elements from it. Ideals of the form $\mc{I}_{\mc{F},\tau}$ belong to a specific and well-studied class, namely, they all are \emph{non-pathological analytic P-ideals} (see \cite{BNF20}). As in this paper we focus on the combinatorics of $\mc{F}$ and the geometry of $X_\mc{F}$, no detailed introduction to $\mc{F}$-ideals will be presented. Nevertheless, the spirit of analytic P-ideals is present in this paper as well, it manifests itself
in the definitions of some families $\mc{F}$. Let us mention here a flagship example (see \cite[Theorem 6.3]{BNF20}) of interactions between the (three) structures mentioned above:

\begin{thm}\label{noell1} 
Let $\mc{F}$ be a hereditary family of finite sets covering $\mbb{N}$. Then the following are equivalent:\footnote{Most implications follow from classical results, the new and difficult addition in \cite{BNF20} was (i)$\to$(iv).}
\begin{itemize}\setlength\itemsep{0.1cm}
\item[(i)] $\mc{F}$ is compact in $\mc{P}(\mbb{N})$.
\item[(ii)] $X_\mc{F}$ does not contain $\ell_1$.
\item[(iii)] $X_\mc{F}$ is $c_0$-saturated.
\item[(iv)] Non-trivial $\mc{F}$-ideals are not $F_\sigma$.
\end{itemize}
\end{thm}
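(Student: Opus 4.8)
The plan is to establish the cycle $(i)\Rightarrow(iii)\Rightarrow(ii)\Rightarrow(i)$ among the three ``geometric'' conditions and then to attach $(iv)$ by proving $(i)\Rightarrow(iv)$ together with $\neg(i)\Rightarrow\neg(iv)$. Every step uses one elementary remark: since $\mc{F}\subseteq[\NN]^{<\infty}$ is hereditary, $\mc{F}$ fails to be compact (equivalently, closed) in $\mc{P}(\NN)$ precisely when there is an infinite $A\subseteq\NN$ with $[A]^{<\infty}\subseteq\mc{F}$ --- if such $A$ exists its finite initial segments witness non-closedness, and conversely if $F_n\in\mc{F}$ converge to an infinite $F\notin\mc{F}$ then $F\cap[1,m]\in\mc{F}$ for every $m$ by heredity, so $[F]^{<\infty}\subseteq\mc{F}$. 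It is also convenient to record the lower semicontinuous submeasure $\phi(B)=\sup\{\sum_{k\in F\cap B}\tau_k:F\in\mc{F}\}$: then $\mc{I}_{\mc{F},\tau}$ is exactly its exhaustive ideal, $\mc{I}_{\mc{F},\tau}=\{B:\phi(B\cap[n,\infty))\to 0\}$, and $\tau\in X_{\mc{F}}$ iff $\phi([n,\infty))\to 0$.

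The geometric implications are essentially classical. For $\neg(i)\Rightarrow\neg(ii)$ I fix $A$ with $[A]^{<\infty}\subseteq\mc{F}$ and note that on the finitely supported vectors with support in $A$ the norm $\|\bullet\|_{\mc{F}}$ coincides with $\|\bullet\|_1$ (take the witness $F=\mathrm{supp}(x)$), so the closed linear span of $\{e_n:n\in A\}$ in $X_{\mc{F}}$ is isometric to $\ell_1$; read contrapositively this is $(ii)\Rightarrow(i)$. For $(i)\Rightarrow(iii)$ I would invoke the known $c_0$-saturation of combinatorial spaces over compact hereditary families (see e.g. \cite{Gasparis-Leung}): inside any infinite-dimensional subspace pass to a normalized block sequence and then induct on the Cantor--Bendixson rank of $\mc{F}$, which is a countable ordinal since $\mc{F}$ is a countable compact space, extracting a further subsequence uniformly equivalent to the unit vector basis of $c_0$ --- each step of the induction peels off a derivative and lowers the rank. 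Finally $(iii)\Rightarrow(ii)$ is immediate because $c_0$ does not embed into $\ell_1$ (e.g. by the Schur property of $\ell_1$), and this together with the isometric-$\ell_1$ observation closes $(i)\Leftrightarrow(ii)\Leftrightarrow(iii)$.

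For $\neg(i)\Rightarrow\neg(iv)$ I would exhibit a concrete non-trivial $F_\sigma$ $\mc{F}$-ideal. Fix $A=\{a_0<a_1<\cdots\}$ with $[A]^{<\infty}\subseteq\mc{F}$ and put $\tau_k=1$ for $k\notin A$ and $\tau_{a_i}=1/(i+1)$. Since every singleton lies in $\mc{F}$ (it covers $\NN$ and is hereditary) and every finite subset of $A$ lies in $\mc{F}$, a direct inspection of $\phi$ shows that
\[
\mc{I}_{\mc{F},\tau}=\Big\{B\subseteq\NN:\ B\setminus A\ \text{is finite and}\ \sum_{k\in B\cap A}\tau_k<\infty\Big\},
\]
which is the intersection of the $F_\sigma$ ideal $\{B:B\setminus A\ \text{finite}\}$ (a copy of $\mathrm{Fin}$) with the summable ideal determined by $\tau\clrest A$ (also $F_\sigma$), hence $F_\sigma$; and it is non-trivial, since it properly contains $\mathrm{Fin}$ (for instance $\{a_{i^2}:i\in\NN\}$ belongs to it) while $A$ itself does not. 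This is $(iv)\Rightarrow(i)$.

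The remaining implication $(i)\Rightarrow(iv)$ is the hard part --- and, as the footnote says, the new contribution of \cite{BNF20}; I would attack it contrapositively: \emph{if $\mc{F}$ is compact and hereditary and $\mc{I}=\mc{I}_{\mc{F},\tau}$ is $F_\sigma$, then $\mc{I}$ is trivial} (equal to $\mathrm{Fin}$ or to $\mc{P}(\NN)$). If $\tau\in X_{\mc{F}}$ then $\mc{I}=\mc{P}(\NN)$; otherwise $\phi([n,\infty))$ does not tend to $0$, and using heredity of $\mc{F}$ one extracts pairwise disjoint $F_0<F_1<\cdots$ in $\mc{F}$ with $\sum_{k\in F_j}\tau_k$ bounded below by a positive constant, so that every infinite sub-union $\bigcup_{j\in J}F_j$ escapes $\mc{I}$ while each $F_j$ lies in $\mc{I}$. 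Writing $\mc{I}=\bigcup_m C_m$ with $C_m$ closed, hereditary and increasing, the goal is to show $\mc{I}$ contains no infinite set, i.e. $\mc{I}=\mathrm{Fin}$: the natural attempt is to combine an infinite $Z\in\mc{I}$ (which sits inside some $C_{m_0}$) with the blocks $F_j$ in a closure argument so as to trap an infinite sub-union of the $F_j$'s inside a single closed $C_m$, which would then have to contain it and contradict its non-membership in $\mc{I}$. The difficulty --- the heart of the theorem --- is to prevent the $\eps$-mass blocks from ``diffusing'' across the levels $C_m$, and this is exactly where the compactness of $\mc{F}$ must be used decisively; I expect the proof to proceed by transfinite induction on the Cantor--Bendixson rank of $\mc{F}$ (or of $\{F\in\mc{F}:F\subseteq Z\}$), reducing at successor stages to a derivative --- a family of strictly smaller rank still carrying enough $\eps$-mass blocks --- with compactness/pigeonhole at limit stages and a small-rank base case handled directly. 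Making this inductive control work --- in effect, the statement that a scattered generating family cannot sustain a non-trivial $F_\sigma$ exhaustive ideal --- is the main obstacle; the rest, as noted, assembles classical facts.
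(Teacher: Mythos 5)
The paper itself offers no proof of Theorem \ref{noell1}: it is imported verbatim from \cite{BNF20} (Theorem 6.3 there), and the footnote already isolates (i)$\to$(iv) as the only implication that does not follow from classical results. Your handling of the classical part is essentially correct. The observation that non-compactness of a hereditary $\mc{F}\subseteq[\NN]^{<\infty}$ is equivalent to the existence of an infinite $A$ with $[A]^{<\infty}\subseteq\mc{F}$ is right and immediately yields an isometric copy of $\ell_1(A)$, giving (ii)$\to$(i) and (iii)$\to$(i); delegating (i)$\to$(iii) to the literature is legitimate; and the $F_\sigma$ ideal you build for $\neg$(i)$\to\neg$(iv) does the job. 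One caveat on that last step: ``non-trivial'' in \cite{BNF20} means not of the form $\{B:B\setminus C\text{ is finite}\}$ for any single $C\subseteq\NN$, not merely ``properly between $\mathrm{Fin}$ and $\mc{P}(\NN)$'' (otherwise even $\mc{F}=[\NN]^{\le 1}$ with an alternating weight would refute the theorem). Your ideal is non-trivial in the correct, stronger sense --- the summable-type ideal it induces on $A$ is tall and dense, so it is not generated by one set over $\mathrm{Fin}$ --- but that is the property you needed to verify, and your stated justification does not do it.

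The genuine gap is (i)$\to$(iv), and you say so yourself: everything after ``the natural attempt is\dots'' is a plan, not a proof. You never show how compactness of $\mc{F}$ forces a non-trivial exhaustive ideal $\mc{I}_{\mc{F},\tau}=\bigcup_m C_m$ to fail to be $F_\sigma$; the transfinite induction on Cantor--Bendixson rank is only conjectured to work (``I expect\dots'', ``the main obstacle''). Worse, the target of your contrapositive is mis-stated: when $\tau\notin X_\mc{F}$ you aim to prove $\mc{I}=\mathrm{Fin}$, but a trivial ideal may well contain infinite sets. For instance, with $\mc{F}=\mc{S}$ the Schreier family, $C$ infinite co-infinite, $\tau\equiv 1$ off $C$ and $\tau$ summable along $C$, one gets $\mc{I}_{\mc{S},\tau}=\{B:B\setminus C\text{ is finite}\}$, which is $F_\sigma$, contains the infinite set $C$, and is trivial. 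So the correct goal is that an $F_\sigma$ $\mc{I}_{\mc{F},\tau}$ is generated over $\mathrm{Fin}$ by a single set, and an argument designed to exclude all infinite members would fail. Since this implication is precisely the one the paper flags as the new and difficult content of \cite{BNF20}, the proposal cannot be accepted as a proof of the theorem.
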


In general, the purpose of our investigations is two-fold, we are looking for (1) structural results similar to the one above in the non-compact case, and (2) new constructions of Banach spaces of the form $X_\mc{F}$. Aim (2) is completed to a much bigger extent than (1), hence the title of our article, mostly because while trying to understand the structure of these spaces, we have found counterexamples to many ``conjectures'' we formulated. We are going to illustrate the wide variety of combinatorial spaces and introduce further research directions raised by these examples and their properties. We will concentrate mainly on combinatorial spaces which are closer to the $\ell_1$ extreme of the spectrum. In the light of Theorem \ref{noell1} it is natural to ask if there are similar characterizations of not containing $c_0$ or $\ell_1$-saturation in the realm of combinatorial Banach spaces. First and foremost, it is natural
to consider the three classical related properties,
\[ \text{Schur property }\longrightarrow\,\ell_1\text{-saturation }\longrightarrow\text{ no copies of }c_0, \]
and to ask if they are equivalent in the class of combinatorial spaces (recall that Theorem \ref{noell1} says that the properties $c_0$-saturation and no copies of $\ell_1$ coincide in this class). We will show that none of the above implications can be reversed in this class. These and the aforementioned (counter)examples, apart from motivating many new constructions of families and spaces, indicate that there may be no hope for satisfying analogs of Theorem \ref{noell1} (neither in the case of $\ell_1$-saturation nor in the case of no copies of $c_0$). 

\smallskip
In Section \ref{combBS} we recall some basic properties of combinatorial spaces and describe how exactly the extreme points of the dual ball $B(X^*_\mc{F})$ look like (this is a generalization of a theorem for the compact case, announced by Gowers in \cite{Gowers-blog} and proved in \cite[Theorem 4.5]{ExtremeKevin}).

\smallskip
In Section \ref{towards} we discuss characterizations and possible variants of the Schur property, $\ell_1$-saturation, and the lack of copies of $c_0$. 

\smallskip
In Section \ref{walking} (the main section of the paper) we take a walk in the Zoo of combinatorial Banach spaces. We demonstrate the rich variety of these spaces, and, among other results, we show the following:
\begin{itemize}\setlength\itemsep{0.1cm}
\item[(a)] As $c_0$ has a, up to equivalence, unique normalized unconditional basis (see \cite{LIND-equiv}), $X_\mc{F}$ is isomorphic to $c_0$ iff $(e_n)$ in $X_\mc{F}$ is equivalent to the canonical basis of $c_0$, and, obviously, this holds iff $\sup\{|F|:F\in\mc{F}\}<\infty$. Unlike in the case of $c_0$, it seems rather difficult to give a similar combinatorial characterization of $X_\mc{F}$ being isomorphic to $\ell_1$ (see Example \ref{exa-balazs}).
\item[(b)] Regarding natural dualizations of Theorem \ref{noell1}, ``nice'' properties of $\mc{F}$ do not seem to work. More precisely, one may suspect that nowhere compactness or, at least, everywhere perfectness (see Section \ref{towards}) kills copies of $c_0$. We will show that this is not the case (see Example \ref{exa-spoiled-farah} and Example \ref{iFh-n2}). 
\item[(c)] The natural ``uniform'' version of the Schur property (see Section \ref{towards}) is rather weak, we show an example of a space $X_\mc{F}$ satisfying this property but containing more or less obvious copies of $c_0$ (see Example \ref{exa-S1S2}).   
\item[(d)] As we know, in general, $\ell_1$-saturation does not imply the Schur property, the first example was constructed by Bourgain, and later more and more such examples appeared in the literature (see e.g. \cite{Hagler}, \cite{Popov}, \cite{Galego}). We add an interesting combinatorial space to this list (see Subsection \ref{rapid-farah}) which may be one of the simplest one so far (the proof as well). 
\item[(e)] We present an interesting example of a combinatorial space that does not contain $c_0$ but is not $\ell_1$-saturated either. The idea behind this construction is that for every Banach space $Y$ with unconditional basis $(b_n)$, there is a canonical family $\mc{F}$ such that a complemented block basic sequence in $X_\mc{F}$ is equivalent to $(b_n)$ (see Theorem \ref{prere}). It turns out that with $Y=\ell_2$ this canonical space does not contain $c_0$ (see Example \ref{exa-l2}).
\item[(f)] There is a universal family, that is, a hereditary cover $\mc{P}$ of $\NN$ by finite sets such that every such family $\mc{G}$ is isomorphic (via an injection) of a restriction $\mc{P}\clrest H=\{F\in\mc{P}:F\subseteq H\}$ of $\mc{P}$ (see
	Proposition \ref{fraise}). It follows that $X_\mc{P}$ contains isometric complemented copies of all combinatorial spaces (in particular, it is universal in this class). Applying the construction from (e), it follows that the space $X_\mc{P}$ is
		isomorphic to Pe\l czy\'nski's universal space (see \cite{universalPelczynski}), that is, it is a space with unconditional basis which contains complemented copies of all such spaces. However, our example is not permutatively equivalent to
		the original Pe\l czy\'nski space and so it witnesses the negative answer to an old question posed by Pe\l czy\'nski (\cite[Problem 4]{universalPelczynski}).
\end{itemize}

In Section \ref{J ideals} we study a certain combinatorial measurement of how far $X_\mc{F}$ from $c_0$ actually is, more precisely, the families of the form
\[ \mc{H}(\mc{F},\neg c_0)=\big\{H\subseteq \NN:X_{\mc{F}\upharpoonright H}\,\text{ does not contain copies of }\,c_0\big\}\]
where $X_{\mc{F}\upharpoonright H}=[(e_n)_{n\in H}]=\overline{\mrm{span}}(\{e_n:n\in H\})\subseteq X_\mc{F}$. We show that $\mc{H}(\mc{F},\neg c_0)$ is a coanalytic ideal on $\NN$ and present some interesting examples including a complete coanalytic one.

\smallskip
In Section \ref{quesec} further research directions, motivated by the results from the previous sections, are outlined.

\subsection*{Acknowledgements} We would like to thank Bal\'azs Keszegh for drawing our attention to the hypergraph used in Example \ref{exa-balazs}, and Jordi Lopez-Abad for the stimulating discussions over the subject of this paper. The first three
authors are particularly indebted to Kevin Beanland who taught them a lot about combinatorics in Banach spaces, and directly inspired several results of this article including Example \ref{exa-S1S2} and Proposition \ref{A-no-c0}.

\section{Combinatorial Banach spaces}\label{combBS} 

In this section, we will set the stage for combinatorial Banach spaces. Also, we will describe how the extreme points of their dual unit balls look like.

\smallskip
In general, we can define combinatorial spaces generated by any family $\mc{C}\subseteq\mc{P}(\mbb{N})$ covering $\mbb{N}$, also, the largest possible sequence space containing the combinatorial space (see \cite{BNF20}): 
\begin{align*}
\|x\|_\mc{C}&=\sup\bigg\{\sum_{k\in C}|x(k)|:C\in\mc{C}\bigg\},\\
\mrm{FIN}(\mc{C})&=\big\{x\in\mbb{R}^\mbb{N}:\|x\|_\mc{C}<\infty\big\},\\
\mrm{EXH}(\mc{C})&=\Big\{x\in\mbb{R}^\mbb{N}:\|P_{[n,\infty)}(x)\|_\mc{C}\xrightarrow{n\to\infty} 0\Big\}=X_\mc{C}.
\end{align*}

Then $\mrm{FIN}(\mc{C})$ and $\mrm{EXH}(\mc{C})$ equipped with $\|\bullet\|_\mc{C}$ are Banach spaces, $\mrm{EXH}(\mc{C})$ is the completion of $c_{00}$ and the canonical algebraic basis of $c_{00}$ is a normalized $1$-unconditional basis in
$\mrm{EXH}(\mc{C})$ (see e.g. \cite{BNF20}). When working with a cover $\mc{C}\subseteq\mc{P}(\Om)$ of a countable (perhaps even finite) set $\Omega$ instead of $\mbb{N}$, we define $\|\bullet\|_\mc{C}$ and $\mrm{FIN}(\mc{C})$ as above, and  
\[ X_\mc{C}=\big\{x\in\mbb{R}^\Omega:\inf\big\{\|P_{\Omega\setminus E}(x)\|_\mc{C}:E\subseteq\Omega\,\text{ is finite}\big\}=0\big\}.\]

In the general case, we will work over $\Omega=\NN$ but some of our more specific examples live on other countable underlying sets, e.g. on the binary tree 
\[2^{<\mbb{N}}=\big\{s:s=\0\,\text{ or }\,s\,\text{ is a }\,\{1,2,\dots,n\}\to\{0,1\}\,\text{function}\,(n\in\NN)\big\}\] or on $\NN^{<\NN}$. Given
a countable set $\Omega$ and $n\in\NN$, we write $[\Omega]^{\leq n}=\{E\subseteq\Omega:|E|\leq n\}$, $[\Omega]^{<\infty}=\{$finite subsets of $\Omega\}$, and $[\Omega]^\infty=\{$infinite subsets of $\Omega\}$. When referring to topological and
measure theoretic properties of families contained in  $\mc{P}(\Omega)$ for an infinite $\Omega$, we consider $\mc{P}(\Omega)\simeq 2^\Omega\simeq 2^\mbb{N}$ equipped with the usual product topology and probability measure. 

\subsection{Relevant generating families}\label{relevant} 
For a $\mc{C}\subseteq\mc{P}(\mbb{N})$ let 
\[ \mrm{hc}(\mc{C})=\big\{E\subseteq\mbb{N}:E\subseteq C\,\text{ for some }\,C\in\mc{C}\big\}\] stand for its hereditary closure and $\overline{\mc{C}}$ for its topological closure (in $\mc{P}(\NN)$, see above). One can easily show that 
$\mrm{hc}(\overline{\mc{C}})=\overline{\mrm{hc}(\mc{C})}$, in particular, if $\mc{C}$ is closed then so is $\mrm{hc}(\mc{C})$, and if $\mc{C}$ is hereditary then so is $\overline{\mc{C}}$. Also, it is trivial to see that if $\mc{C}$ covers $\mbb{N}$ then the following families generate the same norms: 
\[ \mc{C},\;\mrm{hc}(\mc{C}),\;\overline{\mc{C}},\;\mrm{hc}(\overline{\mc{C}}),\,\text{ and }\,\mrm{hc}(\mc{C})\cap [\mbb{N}]^{<\infty}=\mrm{hc}(\overline{\mc{C}})\cap[\mbb{N}]^{<\infty}.\] 
This observation basically says that it is enough to study families from 
\[ \mrm{FHC}=\big\{\mc{F}\subseteq [\mbb{N}]^{<\infty}:\mc{F}\,\text{ is hereditary and covers }\,\mbb{N}\big\},\] 
or, alternatively, from 
\[ \mrm{ADQ}=\big\{\mc{C}\subseteq\mc{P}(\NN):\mc{C}\,\text{ is compact, hereditary, and covers }\,\mbb{N}\big\},\]
elements of $\mrm{ADQ}$ are called \emph{adequate} (see \cite{adequate}). 
Considering $\mrm{FHC}\subseteq \mc{P}([\NN]^{<\infty})\simeq 2^{[\NN]^{<\infty}}\simeq 2^\NN$ and $\mrm{ADQ}\subseteq K(\mc{P}(\NN))=\{$nonempty compact subsets of $\mc{P}(\NN)\}$ equipped with the Vietoris topology, one can easily check that the map 
\[ \mrm{FHC}\to\mrm{ADQ},\,\mc{F}\mapsto\overline{\mc{F}}=\mc{F}\cup\big\{A\in [\NN]^\infty:[A]^{<\infty}\subseteq\mc{F}\big\}\] is a homeomorphism (with inverse $\mc{C}\mapsto \mc{C}\cap [\NN]^{<\infty}$) and that these spaces are homeomorphic to $2^\NN$. Mostly, we will use $\mrm{FHC}$ but sometimes it is more natural to define an adequate family.

\smallskip
Regarding compact families of finite sets, it follows that given an $\mc{F}\in\mrm{FHC}$, $\mc{F}$ is compact (i.e. $\mc{F}=\overline{\mc{F}}$) iff $\mc{F}$ does not contain infinite $\subseteq$-chains iff $\overline{\mc{F}}\subseteq[\mbb{N}]^{<\infty}$. Also, one can easily check that such an $\mc{F}$ is compact iff every sequence $(F_n)_{n\in\NN}$ in $\mc{F}$ has a subsequence $(F_n)_{n\in I}$ (for an infinite $I\subseteq\NN$) which forms a $\Delta$-system, that is, $F_n\cap F_m$ does not depend on $n,m\in I$, $n\ne m$. For detailed studies on and applications of compact families see e.g. \cite{Jordi-Stevo} or  \cite{Jordi15}. We will need an additional easy characterization:
\begin{fact}\label{scattered-fact}
Given $\mc{F}\in\mrm{FHC}$, the following are equivalent:
\begin{itemize}\setlength\itemsep{0.1cm}
\item[(i)] $\mc{F}$ is compact. 
\item[(ii)] $\mc{F}$ is scattered (that is, every non-empty $\mc{H}\subseteq\mc{F}$ contains an isolated point). 
\item[(iii)] $\overline{\mc{F}}$ is scattered.  
\end{itemize}
\end{fact}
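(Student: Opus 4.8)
The plan is to prove the implications cyclically, (i)$\Rightarrow$(iii)$\Rightarrow$(ii)$\Rightarrow$(i), with (ii)$\Rightarrow$(i) being the only step that carries real content; the other two are essentially formal.

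For (iii)$\Rightarrow$(ii): scatteredness passes to arbitrary subspaces (if $A\subseteq Y\subseteq X$ is nonempty and $X$ is scattered, then $A$, as a nonempty subset of $X$, has a point isolated in its own subspace topology, which is the same topology whether computed in $Y$ or in $X$), and $\mc{F}\subseteq\overline{\mc{F}}$ in $\mc{P}(\NN)$; so if $\overline{\mc{F}}$ is scattered then so is $\mc{F}$. For (i)$\Rightarrow$(iii): if $\mc{F}$ is compact then $\overline{\mc{F}}=\mc{F}$, and since $\mc{F}\in\mrm{FHC}$ we have $\overline{\mc{F}}=\mc{F}\subseteq[\NN]^{<\infty}$, so $\overline{\mc{F}}$ is a \emph{countable} compact metrizable space. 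I would then conclude with the standard fact that a countable compact metrizable space is scattered: if not, transfinite iteration of the Cantor--Bendixson derivative would leave a nonempty perfect closed subset, i.e.\ a nonempty perfect compact metrizable space, which contains a copy of $2^{\NN}$ and is hence uncountable --- a contradiction.

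The heart of the matter is (ii)$\Rightarrow$(i), which I would prove by contraposition. Assume $\mc{F}$ is not compact. By the characterization recalled above this gives an infinite $\subseteq$-chain $F_0\subsetneq F_1\subsetneq\cdots$ in $\mc{F}$; put $A=\bigcup_nF_n$, which is infinite because the chain is strictly increasing. Heredity of $\mc{F}$ together with the fact that every finite subset of $A$ already lies in some $F_n$ gives $[A]^{<\infty}\subseteq\mc{F}$. Finally, $[A]^{<\infty}$ is a nonempty subfamily of $\mc{F}$ with no isolated point: a basic clopen neighbourhood of a finite set $E\subseteq A$ in $\mc{P}(\NN)$ is obtained by prescribing the characteristic function on some initial segment $[1,m]$ with $m>\max E$, and taking $a\in A$ with $a>m$ puts $E\cup\{a\}$ --- a member of $[A]^{<\infty}$ distinct from $E$ --- into that neighbourhood. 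Hence $\mc{H}=[A]^{<\infty}$ witnesses the failure of (ii).

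I do not expect a serious obstacle anywhere: the only genuinely combinatorial ingredient is the implication ``not compact $\Rightarrow$ contains an infinite $\subseteq$-chain'', which is already available in the excerpt, and the only slightly non-elementary citation is ``countable compact metrizable $\Rightarrow$ scattered''. The rest is routine verification (that $A$ is infinite, that $[A]^{<\infty}\subseteq\mc{F}$, and that the exhibited subfamily is crowded), so most of the write-up would go into stating these cleanly rather than into any delicate argument.
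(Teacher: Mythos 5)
Your proof is correct, and its essential content coincides with the paper's: the substantive implication in both arguments is that non-compactness produces an infinite set $A$ with $[A]^{<\infty}\subseteq\mc{F}$, and that $[A]^{<\infty}$ is a crowded (isolated-point-free) subfamily, which is exactly how the paper handles its (ii)$\to$(iii) and (iii)$\to$(i) legs. The only genuine divergence is in the direction ``compact $\Rightarrow$ scattered.'' The paper argues this combinatorially and from scratch: since a compact $\mc{F}$ has no infinite $\subseteq$-chains, every element of a nonempty $\mc{H}\subseteq\mc{F}$ lies below a $\subseteq$-maximal element of $\mc{H}$, and maximal elements are isolated in $\mc{H}$ (a basic neighbourhood $[H',m]$ with $m\geq\max H'$ meets $\mc{H}$ only in $H'$). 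You instead observe that $\mc{F}=\overline{\mc{F}}\subseteq[\NN]^{<\infty}$ is a countable compact metrizable space and invoke the Cantor--Bendixson fact that such spaces are scattered. Both are fine; the paper's route is self-contained and exhibits the isolated points explicitly, while yours is shorter at the cost of an external (though entirely standard) citation. The different cyclic order of the implications is cosmetic.
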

\begin{proof}
(i)$\to$(ii): Given $\mc{H}\subseteq \mc{F}=\overline{\mc{F}}$, every $H\in\mc{H}$ can be covered by a $\subseteq$-maximal $H'\in\mc{H}$ (otherwise there would be an infinite chain in $\mc{H}\subseteq\mc{F}$), and these maximal elements are isolated points of $\mc{H}$. (ii)$\to$(iii): If there was an infinite $A\in\overline{\mc{F}}$, then $[A]^{<\infty}\subseteq\mc{F}$ would have no isolated points;  therefore, $\overline{\mc{F}}=\mc{F}$. (iii)$\to$(i) follows like (ii)$\to$(iii).   
\end{proof}

\subsection{Extreme points of $B(X^*_\mc{F})$}\label{extreme} 

In \cite[Theorem 4.5]{ExtremeKevin} the authors gave a combinatorial characterization of the extreme points of $B(X^*_\mc{F})$ for compact $\mc{F}\in\mrm{FHC}$. We will show that the same characterization works in the general setting.

\smallskip
If $X$ has a basis $(e_n)$ and $f\in X^*$, then $f(x)=\sum_{n=1}^\infty f(e_n)e^*_n(x)$ for every $x\in X$, and hence we may and will identify $f$ and $(f(e_n))\in\mbb{R}^\mbb{N}$ and consider $X^*\subseteq\mbb{R}^\mbb{N}$. 

\begin{rem} Of course, the series $\sum_{n=1}^\infty f(e_n)e^*_n$ is always weak$^*$-convergent in $X^*$. Applying Theorem \ref{noell1}, $\mc{F}\in\mrm{FHC}$ is compact iff $(e_n)$ is a shrinking basis in $X_\mc{F}$ iff $\sum_{n=1}^\infty f(e_n)e^*_n$ is norm-convergent in $X^*_\mc{F}$ for every $f\in X^*_\mc{F}$. 
\end{rem}

\begin{notation}
To avoid confusion when working with sequences of vectors and scalars, from now on, if $x,y_n,\al\in \mbb{R}^\mbb{N}$ (e.g. $x,y_n\in X_\mc{F}\subseteq \mbb{R}^\mbb{N}$ and $\al\in X^*_\mc{F}\subseteq\mbb{R}^\mbb{N}$) then we will refer to their $k$th coordinate as $x(k),y_n(k),\al(k)$ instead of using subscripts. 
\end{notation}

If $f\in X^*_\mc{F}$ and $\al=(f(e_n))\in\mbb{R}^\mbb{N}$, then we will write $f(x)=\la\al,x\ra$. It follows that if $\al,\be\in\mbb{R}^\mbb{N}$, $\al\in X^*_\mc{F}$, and $|\be(n)|\leq |\al(n)|$ for every $n$, then $\be\in X^*_\mc{F}$ as well and
$\|\be\|^*_\mc{F}\leq \|\al\|^*_\mc{F}$ where $\|\bullet\|^*_\mc{F}$ stands for the usual norm of $X^*_\mc{F}$. (As a side remark, let us mention that the norm $\|\bullet\|^*_\mc{F}$ is a Banach envelope of a certain simply definable quasi-norm of combinatorial nature, see \cite{quasiJachimek}.)     

\smallskip
If $\be\in B(X^*_\mc{F})$ then $|\be(n)|\leq 1$ for every $n$; conversely, if $\mrm{supp}(\be)\in\overline{\mc{F}}$ and $|\be(n)|\leq 1$ for every $n$, then $\be\in B(X^*_\mc{F})$. In particular, if $\sigma\in \{\pm 1,0\}^\mbb{N}$ is such that $\mrm{supp}(\sigma)\in\overline{\mc{F}}$, then $\sigma\in B(X^*_\mc{F})$ and, unless $\mrm{supp}(\sigma)=\0$, $\|\sigma\|^*_\mc{F}=1$. Now, if $\mc{H}\subseteq\overline{\mc{F}}$ define
\[ W(\mc{H})=\big\{\sigma\in\{\pm 1,0\}^\mbb{N}:\mrm{supp}(\sigma)\in \mc{H}\big\}\subseteq B(X^*_\mc{F}).\]

Notice that if $\mc{F}\subseteq\mc{H}$ then $W(\mc{H})$ is a \emph{norming set}, that is, 
\[ \|x\|_\mc{F}=\sup\big\{|\la\sigma,x\ra|:\sigma\in W(\mc{H})\big\}\,\text{ for every }\,x\in X_\mc{F},\]
equivalently (as $W(\mc{H})=-W(\mc{H})$, see \cite[Lemma 4]{Brech-isometries}), $\overline{\mrm{conv}}^{w^*}\!(W(\mc{H}))=B(X^*_\mc{F})$.

\smallskip
Also, one can easily check that the weak$^*$ topology on $W(\mc{H})$ coincides with the inherited topology from the product $\{\pm 1,0\}^\mbb{N}$; and it follows that $(W(\mc{H}), w^*)$ is compact iff $W(\mc{H})\subseteq\{\pm 1,0\}^\NN$ is compact iff $\mc{H}\subseteq\mc{P}(\mbb{N})$ is compact. It also follows that 
\[ X_\mc{F}\to C(W(\overline{\mc{F}}),w^*),\;x\mapsto\la\bullet,x\ra\,\text{ is an isometric embedding.}\]

Given $\mc{F}\in\mrm{FHC}$ let $\max(\overline{\mc{F}})$ be the set of all maximal elements of $(\overline{\mc{F}},\subseteq)$, that is, 
\[ \max(\overline{\mc{F}})=\big\{A\in\overline{\mc{F}}:A\,\text{ has no proper extension in }\overline{\mc{F}}\big\}.\] 
Then $\max(\overline{\mc{F}})$ covers $\NN$, moreover, $W(\max(\overline{\mc{F}}))$ is also a norming set.  

\begin{prop}\label{extremedual} $\mrm{Ext}(B(X_\mc{F}^*))=W(\max(\overline{\mc{F}}))$. 
\end{prop}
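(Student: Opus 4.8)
The plan is to prove the two inclusions separately, both working inside the weak$^*$-compact set $W(\overline{\mc{F}})\subseteq\{\pm1,0\}^\NN$, whose convex hull is weak$^*$-dense in $B(X_\mc{F}^*)$.

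For the inclusion $W(\max(\overline{\mc{F}}))\subseteq\mrm{Ext}(B(X_\mc{F}^*))$, I would take $\sigma\in W(\max(\overline{\mc{F}}))$, so $\mrm{supp}(\sigma)=A\in\max(\overline{\mc{F}})$, and suppose $\sigma=\frac12(\be_1+\be_2)$ with $\be_1,\be_2\in B(X_\mc{F}^*)$. Since $|\be_i(n)|\leq 1$ for all $n$ and $i$, and the average equals $\pm1$ exactly on $A$, we immediately get $\be_1(n)=\be_2(n)=\sigma(n)$ for every $n\in A$. The point is then to show $\be_i$ vanishes off $A$: if some $\be_i(m)\neq 0$ for $m\notin A$, then (by the basic observation in the paper that coordinates of a functional of norm $\leq 1$ are bounded by $1$, applied to the restriction) one checks that $\mrm{supp}(\be_i)$ would have to meet $\overline{\mc{F}}$ in a set properly extending $A$ — more precisely, one uses that $A\cup\{m\}$, being a finite set, lies in $\overline{\mc{F}}$ iff it lies in $\mc{F}$, and by maximality of $A$ it does not; hence the vector $\de_m$ (or rather $\mrm{sign}(\be_i(m))e_m$) together with a suitable unit vector on $A$ witnesses $\|\be_i\|_\mc{F}^*>1$, a contradiction. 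This forces $\be_1=\be_2=\sigma$, so $\sigma$ is extreme.

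For the reverse inclusion $\mrm{Ext}(B(X_\mc{F}^*))\subseteq W(\max(\overline{\mc{F}}))$, since $W(\max(\overline{\mc{F}}))$ is a weak$^*$-compact norming set with $W(\max(\overline{\mc{F}}))=-W(\max(\overline{\mc{F}}))$, we have $\overline{\mrm{conv}}^{w^*}(W(\max(\overline{\mc{F}})))=B(X_\mc{F}^*)$, and by Milman's theorem (the partial converse to Krein--Milman) every extreme point of $B(X_\mc{F}^*)$ lies in the weak$^*$-closure of $W(\max(\overline{\mc{F}}))$. So it suffices to show that set is already weak$^*$-closed, equivalently that $\max(\overline{\mc{F}})$ is a closed subset of $\mc{P}(\NN)$. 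This is where I expect the main subtlety: $\max(\overline{\mc{F}})$ need not be closed in general for an arbitrary hereditary family — but I would argue that for the purposes here it is enough to observe that a limit $\tau$ of $\sigma_k\in W(\max(\overline{\mc{F}}))$ has $\mrm{supp}(\tau)\in\overline{\mc{F}}$ (closedness of $\overline{\mc{F}}$) and then, if $\mrm{supp}(\tau)$ were not maximal, to derive a contradiction by pushing $\tau$ to a nontrivial convex combination of two elements of $B(X_\mc{F}^*)$ distinct from $\tau$ using a coordinate direction along a proper extension of $\mrm{supp}(\tau)$ in $\overline{\mc{F}}$; this shows directly that any extreme point has maximal support, bypassing the closedness question. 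Combined with the first paragraph, $\mrm{Ext}(B(X_\mc{F}^*))=W(\max(\overline{\mc{F}}))$.

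The hard part will be handling the non-compact case in the second inclusion: when $\mc{F}$ is not compact, $\overline{\mc{F}}$ contains infinite sets and a proper extension of $\mrm{supp}(\tau)$ inside $\overline{\mc{F}}$ may only be reached ``in the limit,'' so one must be careful that the perturbation $\tau\pm t\,\sigma'$ (with $\sigma'$ supported on a single coordinate $m$ such that $\mrm{supp}(\tau)\cup\{m\}\in\overline{\mc{F}}$, hence in $\mc{F}$ since it is finite) genuinely stays in $B(X_\mc{F}^*)$; this is exactly where the finiteness-of-witnesses reduction $A\cup\{m\}\in\overline{\mc{F}}\Leftrightarrow A\cup\{m\}\in\mc{F}$ does the work, and I would isolate it as the key lemma. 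Everything else — boundedness of coordinates, $W(\mc{H})$ being norming when $\mc{F}\subseteq\mc{H}$, the identification $X_\mc{F}^*\subseteq\mbb{R}^\NN$ — is already available in the excerpt.
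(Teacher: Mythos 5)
Your overall strategy coincides with the paper's: Milman's theorem plus a one-coordinate perturbation for $\mrm{Ext}(B(X^*_\mc{F}))\subseteq W(\max(\overline{\mc{F}}))$, and agreement on the support plus a finite test vector for the reverse inclusion. However, the statement you isolate as the ``key lemma'' --- that $A\cup\{m\}$, being finite, lies in $\overline{\mc{F}}$ iff it lies in $\mc{F}$ --- is false precisely in the case you single out as the hard one: when $\mc{F}$ is non-compact, a maximal element $A$ of $\overline{\mc{F}}$ is typically an \emph{infinite} set, so $A\cup\{m\}$ is never finite, is automatically outside $\mc{F}$, and the equivalence carries no information. In the direction $\mrm{Ext}(B(X^*_\mc{F}))\subseteq W(\max(\overline{\mc{F}}))$ this is harmless: to see that $\tau\pm e^*_m\in B(X^*_\mc{F})$ you never need $\mrm{supp}(\tau)\cup\{m\}\in\mc{F}$, only $\mrm{supp}(\tau)\cup\{m\}\in\overline{\mc{F}}$ together with the observation, recorded just before the proposition, that a sequence with coordinates bounded by $1$ and support in $\overline{\mc{F}}$ lies in the dual ball. (Your detour through the closedness of $\max(\overline{\mc{F}})$ is also unnecessary: applying Milman to the closed norming set $W(\overline{\mc{F}})$, as the paper does, lands you directly in $W(\overline{\mc{F}})$, after which the perturbation rules out non-maximal supports.)

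In the direction $W(\max(\overline{\mc{F}}))\subseteq\mrm{Ext}(B(X^*_\mc{F}))$ the gap is real: knowing only that $A\cup\{m\}\notin\overline{\mc{F}}$ for an infinite $A$, you still owe the construction of your ``suitable unit vector on $A$''. The missing step is to use $\overline{\mc{F}}=\mc{F}\cup\{B\in[\NN]^\infty:[B]^{<\infty}\subseteq\mc{F}\}$ to extract a finite nonempty $S\subseteq A$ with $S\cup\{m\}\notin\mc{F}$: some finite subset of $A\cup\{m\}$ is outside $\mc{F}$; it must contain $m$ because $[A]^{<\infty}\subseteq\mc{F}$; and it is not $\{m\}$ alone because $\mc{F}$ is a hereditary cover of $\NN$. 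Then the vector $x$ with $\mrm{supp}(x)=S\cup\{m\}$ and $x(n)=\mrm{sign}(\be_i(n))/|S|$ has $\|x\|_\mc{F}=1$ (by heredity, no $F\in\mc{F}$ contains all of $S\cup\{m\}$, so $|F\cap(S\cup\{m\})|\leq|S|$, and $S$ itself is in $\mc{F}$), while $\la\be_i,x\ra=1+|\be_i(m)|/|S|>1$, the desired contradiction. Without this extraction the argument does not close, since for infinite $A$ there is no normalized test vector supported on all of $A\cup\{m\}$.
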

\begin{proof} $\mrm{Ext}(B(X_\mc{F}^*))\subseteq W(\max(\overline{\mc{F}}))$: Consider the locally convex space $(X^*_\mc{F},w^*)$. We know that $B(X^*_\mc{F})=\overline{\mrm{conv}}^{w^*}(W(\overline{\mc{F}}))$ is weak$^*$ compact, and hence, applying Milman's theorem (see \cite[Theorem 3.66]{bible}), $\mrm{Ext}(B(X^*_\mc{F}))\subseteq \overline{W(\overline{\mc{F}})}^{w^*}=W(\overline{\mc{F}})$. Now let $\sigma\in W(\overline{\mc{F}})$ such that $\mrm{supp}(\sigma)$ is not maximal in $\overline{\mc{F}}$ and fix a $k\in\mbb{N}\setminus\mrm{supp}(\sigma)$ such that $\mrm{supp}(\sigma)\cup\{k\}\in\overline{\mc{F}}$. It follows that $\sigma\ne \sigma\pm e^*_k\in B(X^*_\mc{F})$ and hence $\sigma$ is not an extreme point.

\smallskip
$W(\max(\overline{\mc{F}}))\subseteq\mrm{Ext}(B(X_\mc{F}^*))$: Let $\sigma\in W(\max(\overline{\mc{F}}))$, $\al,\be\in B(X^*_\mc{F})\setminus\{\sigma\}$, and assume on the contrary that $\sigma=(\al+\be)/2$. It follows that $\al\clrest \mrm{supp}(\sigma)=\be\clrest \mrm{supp}(\sigma)=\sigma\clrest \mrm{supp}(\sigma)$, and hence  there is a $k\in \mrm{supp}(\al)\setminus \mrm{supp}(\sigma)$. As $\mrm{supp}(\sigma)\cup\{k\}\notin\overline{\mc{F}}$, we can pick a finite non-empty $S\subseteq \mrm{supp}(\sigma)$ such that $S\cup\{k\}\notin\mc{F}$. We define $x\in X_\mc{F}$ such that $\mrm{supp}(x)=S\cup\{k\}$ and $x(n)=\mrm{sign}(\al(n))/|S|$ for every $n\in\mrm{supp}(x)$. Then $\|x\|_\mc{F}=1$ but $\la\al,x\ra=1+|\al(k)|/|S|>1$, a contradiction. 
\end{proof}

\section{Towards structural theorems}\label{towards}

Theorem \ref{noell1} characterizes the lack of isomorphic copies of $\ell_1$ and $c_0$-saturation in the realm of combinatorial
Banach spaces by bonding the combinatorics of $\mc{F}$ with the geometry of $X_\mc{F}$ (in the compact case). This section is devoted to discussing (possible) characterizations of some close relatives and ``duals'' of these properties: The Schur property, the lack of copies of $c_0$, and $\ell_1$-saturation.

\subsection{Characterization of the Schur property} 

We begin with a characterization of the Schur property in the class of combinatorial spaces. We will need the following easy lemma:

\begin{lem}\label{Sch-lemma} Given $\mc{F}\in\mrm{FHC}$, the following are equivalent:
\begin{itemize}\setlength\itemsep{0.1cm}
\item[(a)] $X_\mc{F}$ satisfies the Schur property.
\item[(b)] $\mc{F}$-supported normalized block basic sequences are not weakly null.
\end{itemize}
\end{lem}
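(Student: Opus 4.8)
The plan is to prove both implications, with the nontrivial direction being (b)$\to$(a). Throughout, ``$\mc{F}$-supported'' should mean that each block of the block basic sequence is, after passing to its support, an element of $\overline{\mc{F}}$ (or its support is covered by an element of $\overline{\mc{F}}$); this is the natural normalization in $X_\mc{F}$ since by the discussion in Section \ref{combBS} the extreme points of $B(X^*_\mc{F})$ are exactly the $\pm 1,0$-vectors supported on $\max(\overline{\mc{F}})$, so a normalized block vector ``sees its norm'' on a single set from $\overline{\mc{F}}$.

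\emph{The easy direction} (a)$\to$(b): If $X_\mc{F}$ has the Schur property, then every weakly null sequence is norm-null, so in particular no \emph{normalized} sequence can be weakly null; this applies to any normalized block basic sequence, $\mc{F}$-supported or not. So (b) holds trivially. (One should double-check the degenerate possibility that there are no $\mc{F}$-supported normalized block basic sequences at all, but since $\mc{F}$ covers $\NN$ and consists of finite sets, for each $F\in\mc{F}$ the vector $\frac{1}{|F|}\sum_{k\in F}e_k$ is a norm-one $\mc{F}$-supported vector, and choosing $F_n\in\mc{F}$ with $\min(F_n)\to\infty$ — possible because $\mc{F}$ covers $\NN$ — yields such a sequence.)

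\emph{The main direction} (b)$\to$(a): Assume $X_\mc{F}$ fails the Schur property; I want to produce an $\mc{F}$-supported normalized block basic sequence that is weakly null. Fix a weakly null sequence $(x_n)$ in $X_\mc{F}$ with $\|x_n\|_\mc{F}=1$ for all $n$. The first reduction is standard: since $(e_k)$ is a basis, weak convergence to $0$ implies coordinatewise convergence to $0$, and a gliding-hump / small-perturbation argument (using that $\|P_{[m,\infty)}x_n\|_\mc{F}\to 0$ for each fixed $n$, and that for each $n$ one can find large $m$ with $\|P_{[1,m)}x_n\|_\mc{F}$ close to $1$) lets me pass to a subsequence and replace each $x_n$ by a finitely supported vector $y_n$ with $\|x_n-y_n\|_\mc{F}$ small, the $y_n$ having pairwise disjoint (indeed successive) supports, and $\|y_n\|_\mc{F}$ close to $1$ — renormalizing, $\|y_n\|_\mc{F}=1$. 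Such small perturbations of a weakly null sequence are again weakly null, so $(y_n)$ is a normalized block basic sequence that is weakly null. The remaining point is to upgrade ``block basic'' to ``$\mc{F}$-supported'': for each $n$ pick, by compactness of $\overline{\mc{F}}$ and the description of the norm, a set $F_n\in\overline{\mc{F}}$ (in fact in $\mc{F}$, since $\mrm{supp}(y_n)$ is finite) with $\sum_{k\in F_n}|y_n(k)|=\|y_n\|_\mc{F}=1$, set $z_n=P_{F_n}(y_n)$, and note $\mrm{supp}(z_n)\subseteq F_n\in\mc{F}$ so $z_n$ is $\mc{F}$-supported and still normalized. The $z_n$ are again successively supported. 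Finally $(z_n)$ is weakly null: write $z_n = y_n - P_{\NN\setminus F_n}(y_n)$; to see the correction terms $w_n := P_{\NN\setminus F_n}(y_n)$ are weakly null, it suffices (by $1$-unconditionality and the fact that a normalized block basic sequence in a space with $1$-unconditional basis is weakly null iff $\la\sigma,\cdot\ra$-null for every $\sigma\in W(\overline{\mc{F}})$, i.e. iff it tends to $0$ against every finitely-or-infinitely-supported $\pm1,0$ functional) to observe $|\la\sigma,w_n\ra|\le |\la\sigma,y_n\ra| + |\la\sigma,z_n\ra|$ and both summands tend to $0$ — the first because $(y_n)$ is weakly null, the second because $\|z_n\|_\mc{F}=1$ are successively supported block vectors...

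Here I must be a little more careful, and this is \emph{the main obstacle}: a normalized block basic sequence need not be weakly null in general, so the argument above that $(z_n)$ is weakly null cannot be purely formal. The honest route is: test $(z_n)$ against an arbitrary $f\in X^*_\mc{F}$; by weak$^*$-density of $\overline{\mrm{conv}}(W(\overline{\mc{F}}))$ and a standard $3\eps$-argument it is enough to test against $\sigma\in W(\overline{\mc{F}})$, i.e. to show $\la\sigma,z_n\ra\to 0$ for each $\sigma$ with $\mrm{supp}(\sigma)\in\overline{\mc{F}}$. Now $\la\sigma,z_n\ra=\la\sigma\clrest F_n, y_n\ra - \big(\la\sigma, y_n\ra - \la\sigma\clrest F_n,y_n\ra\big)$ — hmm, cleaner: $\la\sigma,z_n\ra = \la\sigma\clrest F_n, y_n\ra$, and $\sigma\clrest F_n \in W(\overline{\mc{F}})$ as well (since $F_n\in\mc{F}$ hereditary), while $\la\sigma, y_n\ra\to 0$. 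So it remains to control the difference $\la\sigma,y_n\ra - \la\sigma\clrest F_n, y_n\ra = \la\sigma\clrest(\NN\setminus F_n), y_n\ra = \la\sigma, w_n\ra$, and I need this to go to $0$. This does \emph{not} follow from $\|y_n\|_\mc{F}=1$ alone. The resolution: strengthen the gliding-hump step so that not only $\|y_n\|_\mc{F}\ge 1-\eps_n$ but actually $\|y_n\|_\mc{F}$ is \emph{attained, up to $\eps_n$, on a single set} in the sense that there is $F_n\in\mc{F}$ with $\sum_{k\in F_n}|y_n(k)| \ge 1-\eps_n$ \emph{and} $\sum_{k\notin F_n}|y_n(k)| \le \eps_n$ — i.e. $\|w_n\|_1\le\eps_n$, where $\|\cdot\|_1$ is the $\ell_1$-norm; this is achievable because $\|y_n\|_\mc{F}=1$ is itself attained on some finite $G_n\in\mc{F}$ (finite support!), and then replacing $y_n$ by $P_{G_n}y_n$ from the start makes $w_n=0$ outright. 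With $w_n=0$ we get $z_n=y_n$ already $\mc{F}$-supported, the whole difficulty evaporates, and $(z_n)=(y_n)$ is the desired weakly null normalized $\mc{F}$-supported block basic sequence, contradicting (b). So the real content is just the correct order of operations: \emph{first} restrict each $x_n$ (after a tail-perturbation into $c_{00}$) to a finite set on which its norm is attained, so the output vectors are automatically $\mc{F}$-supported, \emph{then} block them up, checking at each step that one stays weakly null and normalized. I would present it in that streamlined order rather than as the failed attempt above.
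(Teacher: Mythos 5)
Your easy direction is fine, and your reduction to a weakly null \emph{normalized block basic} sequence $(y_n)$ (Selection Principle / gliding hump plus stability of weak nullity under small perturbations) matches the paper. You also correctly isolate the crux: why is the restricted sequence $z_n=P_{F_n}(y_n)$, with $F_n\in\mc{F}$, $F_n\subseteq\mrm{supp}(y_n)$, $\|z_n\|_\mc{F}=1$, still weakly null? But your resolution of this point is circular. Replacing $y_n$ by $P_{G_n}(y_n)$ ``from the start'' and observing that the new correction term $w_n$ vanishes is just renaming $z_n$ as $y_n$; the obligation to prove that \emph{this} sequence is weakly null is exactly the obligation you started with, and nothing in the substitution discharges it. Your intermediate claim that one can arrange $\sum_{k\notin F_n}|y_n(k)|\leq\eps_n$ is simply false in general: in $X_{[\NN]^{\leq 1}}=c_0$ take $y_n=\chi_{E_n}$ for large finite $E_n$; the norm is attained on a singleton $F_n$, yet the $\ell_1$-mass outside $F_n$ is $|E_n|-1$. (Your proposed ``streamlined order'' --- restrict first, then block up --- also does not work as stated, since the sets $G_n$ obtained before blocking need not be disjoint, and disjointness is what the correct argument needs. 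A minor further issue: reducing weak nullity to testing against $W(\overline{\mc{F}})$ cannot be justified by weak$^*$-density of the convex hull plus a $3\eps$ argument --- that reduction fails for, e.g., $(e_n)$ in $\ell_1$ against $\{\pm e_k^*\}$; one would have to invoke Rainwater's theorem, i.e.\ Theorem~\ref{suplimsup}, over the extreme points.)

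The missing idea is short: since $(y_n)$ is a block basic sequence, the sets $F_n\subseteq\mrm{supp}(y_n)$ are pairwise disjoint; put $A=\bigcup_n F_n$. For any $\be\in X^*_\mc{F}$ one has $\la\be,z_n\ra=\la\be,P_A(y_n)\ra=\la P_A(\be),y_n\ra$, because $A\cap\mrm{supp}(y_n)=F_n$. The dual $X^*_\mc{F}$ is solid ($|P_A(\be)(k)|\leq|\be(k)|$ for all $k$ implies $P_A(\be)\in X^*_\mc{F}$, as recorded in Section~\ref{combBS}), so $P_A(\be)$ is a \emph{single} legitimate functional working for all $n$ simultaneously, and weak nullity of $(y_n)$ gives $\la\be,z_n\ra\to 0$. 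You were circling this with the functionals $\sigma\clrest F_n$, but you treated them as $n$-dependent perturbations of $\sigma$ to be controlled, rather than as restrictions of the one fixed functional $P_A(\sigma)$. With that replacement your argument closes; without it, the proof has a genuine gap at its decisive step.
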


\begin{proof} To show the non-trivial implication, assume that $(x_n)$ witnesses the failure of the Schur property, that is, $(x_n)$ is weakly null and $\|x_n\|=1$ for every $n$. Applying the Selection Principle (see \cite[Corollary C.2]{Bessaga58}), by thinning out our sequence, we can assume that $(x_n)$ is equivalent to a normalized block basic sequence $(y_n)$. 

	Then $(y_n)$ is also weakly null because given an $f\in X^*_\mc{F}$, $f\clrest [(y_n)]\in [(y_n)]^*\simeq [(x_n)]^*$, it corresponds to an $f'\in [(x_n)]^*$ which extends to an $f''\in X^*_\mc{F}$, therefore $f'(x_n)=f''(x_n)\to 0$, and hence $f(y_n)\to 0$ as well.

	Now, for each $n$, pick an $F_n\in\mc{F}$ such that $\|P_{F_n}(y_n)\|_\mc{F}=1$ and let $z_n=P_{F_n}(y_n)$, an $\mc{F}$-supported normalized block basic sequence. To finish the proof, we show that $(z_n)$ is weakly null. Assume on the contrary that there is a $\be\in X^*_\mc{F}$ such that $\la\be,z_n\ra\not\to 0$. If $A=\bigcup_{n=1}^\infty F_n$ and $\ga=P_A(\be)\in X^*_\mc{F}$, then $\la\ga,y_n\ra=\la\be,z_n\ra\not\to 0$, a contradiction.
\end{proof}

Before the next theorem, we recall a known but perhaps rarely mentioned result (see \cite[Fact 3.119 and Theorem 3.124]{bible}): 

\begin{thm}\label{suplimsup}
Given a Banach space $X$ and bounded sequence $(x_n)$ in $X$. Then 
\[\tag{$\sharp$} \sup\Big\{\limsup_{n\to\infty}f(x_n):f\in \mrm{Ext}(B(X^*))\Big\}=\sup\Big\{\limsup_{n\to\infty}g(x_n):g\in B(X^*)\Big\}.\]
\end{thm}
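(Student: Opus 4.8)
The plan is to prove Theorem \ref{suplimsup} by a standard Choquet/Milman-type argument, reducing the general functional $g\in B(X^*)$ to an extreme point via a limiting procedure. First I would note that the inequality ``$\leq$'' is trivial since $\mrm{Ext}(B(X^*))\subseteq B(X^*)$, so only ``$\geq$'' requires work. Denote by $L(g)=\limsup_{n\to\infty}g(x_n)$ the functional-indexed quantity we want to control; the key difficulty is that $g\mapsto L(g)$ is not linear, only sublinear, so we cannot directly invoke the Krein--Milman theorem on it. Instead I would pass to a weak$^*$-convergent subsequence along which the $\limsup$ is realised: by weak$^*$-compactness of $B(X^*)$ and by passing to a subnet if necessary, fix $g_0\in B(X^*)$ attaining the right-hand supremum and a subsequence $(x_{n_k})$ with $g_0(x_{n_k})\to L(g_0)$.

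The heart of the argument is to replace $L$ by an honest linear functional on $X^*$. Consider a free ultrafilter $\mc{U}$ on $\NN$ and define $\Lambda(f)=\lim_{k\to\mc{U}}f(x_{n_k})$ for $f\in X^*$. Since $(x_{n_k})$ is bounded, $\Lambda$ is a well-defined bounded linear functional on $X^*$, i.e. an element of $X^{**}$, with $\|\Lambda\|\leq\sup_n\|x_n\|=:M$; moreover $\Lambda(g_0)=L(g_0)$ by the choice of subsequence (the ultralimit of a convergent sequence is its limit), so $\Lambda(g_0)$ equals the right-hand side of ($\sharp$). Now $g\mapsto\Lambda(g)$ is weak$^*$-continuous and affine on the weak$^*$-compact convex set $B(X^*)$, so it attains its maximum over $B(X^*)$ at an extreme point $f^*\in\mrm{Ext}(B(X^*))$; hence $\Lambda(f^*)\geq\Lambda(g_0)$. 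Finally, since $\mc{U}$ is an ultrafilter, $\Lambda(f^*)=\lim_{k\to\mc{U}}f^*(x_{n_k})\leq\limsup_{k\to\infty}f^*(x_{n_k})\leq\limsup_{n\to\infty}f^*(x_n)\leq\sup\{\limsup_n f(x_n):f\in\mrm{Ext}(B(X^*))\}$, which is the left-hand side of ($\sharp$). Chaining the inequalities gives ``$\geq$'' and completes the proof.

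The main obstacle is precisely the non-linearity of $\limsup$, which forces the detour through an ultrafilter limit to manufacture a genuine element of $X^{**}$; once that is in place, the Krein--Milman / Bauer maximum principle does the rest. One should double-check the bookkeeping on subsequences versus subnets: if $B(X^*)$ is not weak$^*$-metrizable (i.e. $X$ nonseparable) one may only get a convergent subnet, not a subsequence, but this is harmless because we only ever need the value $L(g_0)$ on the right-hand side and the ultrafilter construction can be carried out along the original index set $\NN$ directly (take any $\mc{U}$ with $\lim_{k\to\mc{U}}g_0(x_k)=L(g_0)$, which exists since $L(g_0)$ is a cluster value of $(g_0(x_k))$). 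Alternatively, one can cite \cite[Fact 3.119 and Theorem 3.124]{bible} as indicated, but the ultrafilter proof above is self-contained and short.
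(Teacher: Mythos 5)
The paper does not actually prove Theorem \ref{suplimsup}; it quotes it from \cite[Fact 3.119 and Theorem 3.124]{bible}, where it is Simons' sup--limsup theorem. So your proposal must be judged against that standard argument, and it has a genuine gap. Everything up to the construction of $\Lambda=\lim_{n\to\mc{U}}x_n\in X^{**}$ is fine (the issue of whether the right-hand supremum is attained is harmless: work with a near-maximizer $g_0$ and let $\eps\to0$). The fatal step is the claim that $g\mapsto\Lambda(g)$ is weak$^*$-continuous on $B(X^*)$. An element of $X^{**}$ is weak$^*$-continuous on $X^*$ (equivalently, weak$^*$-upper semicontinuous on $B(X^*)$, using linearity, the symmetry of the ball, and Krein--Smulian) if and only if it lies in the canonical image of $X$, and a weak$^*$ ultrafilter limit of a bounded sequence need not: for $X=c_0$ and $x_n=\sum_{k\le n}e_k$ one gets $\Lambda=(1,1,\dots)\in\ell_\infty\setminus c_0$. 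Hence Bauer's maximum principle does not apply to $\Lambda$, and the inequality $\sup\{\Lambda(f):f\in\mrm{Ext}(B(X^*))\}\ge\Lambda(g_0)$ is precisely the nontrivial content of the theorem, not something you may quote.

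The gap is not cosmetic. For a general $\Lambda\in X^{**}$ the supremum over $\mrm{Ext}(B(X^*))$ can be strictly smaller than the supremum over $B(X^*)$, because $B(X^*)$ is the weak$^*$-closed but in general not the norm-closed convex hull of its extreme points: for $X=C[0,1]$ the norm-closed convex hull of $\{\pm\delta_t:t\in[0,1]\}$ contains only atomic measures, so Hahn--Banach separation in the norm topology of $M[0,1]=C[0,1]^*$ produces a $\Lambda\in C[0,1]^{**}$ with $\sup_{\mrm{Ext}(B(X^*))}\Lambda<\Lambda(\lambda)$ for Lebesgue measure $\lambda$. What rescues the statement for the particular $\Lambda$'s arising as cluster points of bounded sequences from $X$ is Simons' inequality (\cite[Fact 3.119]{bible}): one first observes that $\mrm{Ext}(B(X^*))$ is a James boundary of $B(X^*)$ --- this is where Bauer's maximum principle legitimately enters, applied to the genuinely weak$^*$-continuous functionals $x\in X$ --- and then Simons' sup--limsup theorem yields ($\sharp$) for any James boundary. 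Your argument needs to be routed through that machinery; as written it establishes only the trivial inequality ``$\leq$''.
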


\begin{thm}\label{Schur} $X_\mc{F}$ satisfies the Schur property if, and only if the following holds: 
\[\tag{$S$}  \forall\text{ $\mc{F}$-supp. norm. bl. basic }\,(x_n)\,\text{ in }\,X_\mc{F}\;\exists\,A\in\overline{\mc{F}}\;\limsup_{n\to\infty}\|P_A(x_n)\|_\mc{F}>0.\] 
\end{thm}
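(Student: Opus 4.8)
The plan is to combine Lemma~\ref{Sch-lemma} with Theorem~\ref{suplimsup} and Proposition~\ref{extremedual}. By Lemma~\ref{Sch-lemma}, $X_\mc{F}$ fails the Schur property exactly when there is an $\mc{F}$-supported normalized block basic sequence $(x_n)$ that is weakly null. So it suffices to show that, for a fixed $\mc{F}$-supported normalized block basic sequence $(x_n)$, the statement ``$(x_n)$ is \emph{not} weakly null'' is equivalent to ``$\exists A\in\overline{\mc{F}}$ with $\limsup_n\|P_A(x_n)\|_\mc{F}>0$''. Since the negation of $(S)$ reads ``there exists an $\mc{F}$-supported normalized block basic sequence $(x_n)$ with $\sup_{A\in\overline{\mc{F}}}\limsup_n\|P_A(x_n)\|_\mc{F}=0$'', this equivalence is exactly what we need.

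First I would handle the easy direction: if some $A\in\overline{\mc{F}}$ satisfies $\limsup_n\|P_A(x_n)\|_\mc{F}>0$, then $(x_n)$ is not weakly null. For each such $n$ (along the subsequence realizing the limsup) choose a sign vector $\sigma_n\in\{\pm1,0\}^\NN$ with $\mrm{supp}(\sigma_n)\subseteq A$ (hence $\mrm{supp}(\sigma_n)\in\overline{\mc{F}}$, as $\overline{\mc{F}}$ is hereditary) witnessing $\la\sigma_n,x_n\ra=\|P_A(x_n)\|_\mc{F}$. Passing to a further subsequence we may assume $\sigma_n\to\sigma$ pointwise; since $\mrm{supp}(\sigma_n)\subseteq A$ and $A$ is finite, in fact $\sigma_n=\sigma$ for all large $n$, and $\mrm{supp}(\sigma)\subseteq A\in\overline{\mc{F}}$, so $\sigma\in B(X^*_\mc{F})$. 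Then $\la\sigma,x_n\ra\not\to 0$, so $(x_n)$ is not weakly null. (Even without the finiteness trick one can argue directly, but the finiteness of $A$ makes this clean.)

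For the converse, suppose $(x_n)$ is not weakly null, so there is $f\in B(X^*_\mc{F})$ with $\limsup_n f(x_n)>0$ (replacing $f$ by $-f$ if needed). By Theorem~\ref{suplimsup} applied to the bounded sequence $(x_n)$, we get
\[ \sup_{\sigma\in\mrm{Ext}(B(X^*_\mc{F}))}\limsup_{n\to\infty}\la\sigma,x_n\ra \;=\;\sup_{g\in B(X^*_\mc{F})}\limsup_{n\to\infty} g(x_n)\;\geq\;\limsup_{n\to\infty}f(x_n)>0, \]
so there is an extreme point $\sigma$ of $B(X^*_\mc{F})$ with $\limsup_n\la\sigma,x_n\ra=:\de>0$. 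By Proposition~\ref{extremedual}, $\sigma\in W(\max(\overline{\mc{F}}))$, in particular $A:=\mrm{supp}(\sigma)\in\overline{\mc{F}}$. Finally, along the subsequence realizing the limsup, $\de-o(1)\leq\la\sigma,x_n\ra=\la\sigma,P_A(x_n)\ra\leq\|\sigma\|^*_\mc{F}\,\|P_A(x_n)\|_\mc{F}\leq\|P_A(x_n)\|_\mc{F}$, whence $\limsup_n\|P_A(x_n)\|_\mc{F}\geq\de>0$, as desired.

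I do not expect a serious obstacle here; the content is entirely in invoking the earlier results. The one point worth double-checking is the role of $\overline{\mc{F}}$ versus $\mc{F}$: in the easy direction the supports of the chosen sign vectors land in $\mc{F}$ directly (since $P_A(x_n)$ is finitely supported), but allowing $A\in\overline{\mc{F}}$ is harmless and is exactly what the converse produces via Proposition~\ref{extremedual}. One should also make sure the equivalence is stated with the ordering of quantifiers in $(S)$: $(S)$ asks for a \emph{single} $A$ working for a given sequence, which is precisely what both directions above deliver.
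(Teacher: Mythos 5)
Your overall route is the same as the paper's: reduce to $\mc{F}$-supported normalized block basic sequences via Lemma \ref{Sch-lemma}, and use Theorem \ref{suplimsup} together with Proposition \ref{extremedual} to pass between weak nullity and the condition in ($S$). The hard direction of your pointwise equivalence (not weakly null $\Rightarrow$ some $A\in\overline{\mc{F}}$ with $\limsup_{n}\|P_A(x_n)\|_\mc{F}>0$) is correct and is precisely the contrapositive of the argument in the paper.

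The easy direction as written contains a genuine error: you assert that $A$ is finite, but $A\in\overline{\mc{F}}$ may be infinite, and in fact it \emph{must} be infinite for the hypothesis to have any content --- for a block basic sequence and finite $A$ one has $\mrm{supp}(x_n)\cap A=\0$ eventually, hence $\limsup_n\|P_A(x_n)\|_\mc{F}=0$. So the case your ``finiteness trick'' covers is vacuous, and in the case that matters the subsequence argument breaks down: the $\sigma_n$ are supported in $A\cap\mrm{supp}(x_n)$, which escapes to infinity, so the pointwise limit $\sigma$ may well be $0$, and then $\la\sigma,x_n\ra\to 0$ gives nothing. The repair is the paper's one-line construction: since the supports of the $x_n$ are pairwise disjoint, the sign vectors $\sigma_n$ (chosen with $\mrm{supp}(\sigma_n)\subseteq A\cap\mrm{supp}(x_n)$) glue into a single $\sigma\in\{\pm 1,0\}^\NN$ with $\mrm{supp}(\sigma)\subseteq A\in\overline{\mc{F}}$, hence $\sigma\in B(X^*_\mc{F})$, and $\la\sigma,x_n\ra=\|P_A(x_n)\|_\mc{F}$ for every $n$ (using that $A\cap\mrm{supp}(x_n)\in\mc{F}$ by heredity, so $\|P_A(x_n)\|_\mc{F}=\|P_A(x_n)\|_1$). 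With this substitution your proof is complete and coincides with the one in the paper.
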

\begin{proof}
($S$) implies the Schur property: We will apply Lemma \ref{Sch-lemma}. Fix an $\mc{F}$-supported normalized block basic sequence $(x_n)$ and an $A\in\overline{\mc{F}}$ such that $\limsup_n\|P_A(x_n)\|_\mc{F}>0$. We can assume that $A\subseteq\bigcup_{n=1}^\infty\mrm{supp}(x_n)$ and define $\eps\in\{\pm 1,0\}^\mbb{N}$ as follows: $\mrm{supp}(\eps)=A$ and $\eps(k)=\mrm{sign}(x_n(k))$ if $k\in\mrm{supp}(x_n)$. Then $\eps\in X^*_\mc{F}$ and $\la\eps,x_n\ra=\|P_A(x_n)\|_\mc{F}\not\to 0$.  

\smallskip
Conversely, assume on the contrary that $X_\mc{F}$ satisfies the Schur property and there is an $\mc{F}$-supported normalized block basic sequence $(x_n)$ such that $P_A(x_n)\to 0$ for every $A\in\overline{\mc{F}}$. If $\sigma\in \mrm{Ext}(B(X^*_\mc{F}))=W(\max(\overline{\mc{F}}))$ with $A=\mrm{supp}(\sigma)$, then $|\la\sigma, x_n\ra|\leq \|P_A(x_n)\|_\mc{F}$ and hence the left side of ($\sharp$) is simply $\sup\{0\}=0$. Applying that the right side equals $0$, $\liminf_ng(x_n)\leq\limsup_ng(x_n)\leq 0$ for every $g\in B(X^*_\mc{F})$ and $\liminf_ng(x_n)$ cannot be negative (otherwise $\limsup_n(-g(x_n))=-\liminf_n g(x_n)>0$), it follows that $(x_n)$ is weakly null, a contradiction.      
\end{proof}

How do we apply this characterization in practice? It is trivial to check that the following simple combinatorial property implies ($S$): 
\[\tag{$S^*$} \forall\,\text{ pairwise disjoint }\,(F_n)\,\text{ in }\,\mc{F}\;\exists\,\text{ finite }\,\mc{A}\subseteq \overline{\mc{F}}\;\,\exists^\infty\,n\;F_n\subseteq \bigcup\mc{A}.\]
As we will see, all our concrete examples satisfying the Schur property actually satisfy ($S^*$) because of rather obvious reasons and typically with a single $A\in\overline{\mc{F}}$ (instead of a finite $\mc{A}\subseteq\overline{\mc{F}}$). 

\smallskip
Property ($S$) is not really a property of $\mc{F}$, it is ``too geometric'', one may consider its ``uniform'' version:
\[\tag{$U_S$} \forall\,\text{ pairwise disjoint }\,(F_n)\,\text{ in }\,\mc{F}\setminus\{\0\}\;\exists\,A\in\overline{\mc{F}}\;\limsup_{n\to\infty}\frac{|A\cap F_n|}{|F_n|}>0.\]
This is indeed the special case of ($S$) applied to the sequence $x_n=\chi_{F_n}/|F_n|$. We will see that this property is quite weak, there may even be copies of $c_0$ in an $X_\mc{F}$ satisfying ($U_S$) (see Example \ref{exa-S1S2}).

\subsection{Characterization of the lack of copies of $c_0$} 

Towards possible characterizations of ``$X_\mc{F}$ does not contain $c_0$'', let us first recall some results. First, note that $X_\mc{F}=\mrm{EXH}(\mc{F})$ does not contain $c_0$ iff $(e_n)$ is boundedly complete iff
$\mrm{EXH}(\mc{F})=\mrm{FIN}(\mc{F})$ (see \cite[Theorem 5.4]{BNF20} for some further equivalent statements in this list). In this section, we are looking for characterizations of a more combinatorial nature.

\smallskip
We know that a normalized basic sequence $(x_n)$ in a Banach space $X$ is equivalent to the usual basis of $c_0$ iff 
\[ \exists\,K>0\;\forall\,n\;\forall\,a\in\mbb{R}^n\;\bigg\|\sum_{i=1}^na(i)x_i\bigg\|\leq K\max_{i=1,\dots,n}|a(i)|.\] 

Now, if $X$ has an unconditional basis $(b_n)$ and $X$ contains a copy of $c_0$, then, assuming $(b_n)$ is normalized, there is a normalized block basic (nbb) sequence $(x_n)$ w.r.t. $(b_n)$ which is equivalent to the canonical basis of $c_0$ (see \cite[Theorem 3.3.2]{AK16}). As a normalized block basic sequence in such a space is automatically unconditional, it follows that such a sequence is equivalent to the basis of $c_0$ iff   
\[\tag{nbb $\sim c_0$} \exists\,K>0\;\forall\,n\;\bigg\|\sum_{i=1}^nx_i\bigg\|\leq K.\]

If $X=X_\mc{F}$, $b_n=e_n$, and for a normalized block basic sequence $\overline{x}=(x_n)$, $s(\overline{x})$ stands for the formal sum of $(x_n)$, then (nbb $\sim c_0$) is equivalent to $\|s(\overline{x})\|_\mc{F}<\infty$. Furthermore, in this
case, we can always assume that such a normalized block basic sequence is $\mc{F}$-supported, that is, $\mrm{supp}(x_n)\in\mc{F}$ for every $n$ because if $\|P_{F_n}(x_n)\|_\mc{F}=1$ with some $F_n\in\mc{F}$ and $y_n=P_{F_n}(x_n)$, then
$\overline{y}=(y_n)$ is an $\mc{F}$-supported normalized block basic sequence and $\|s(\overline{y})\|_\mc{F}\leq \|s(\overline{x})\|_\mc{F}$.

Of course, there are other natural ways to express $\|s(\overline{x})\|_\mc{F}$: 
\[ \|s(\overline{x})\|_\mc{F}=\sup_{H\in\mc{H}}\|P_H(s(\overline{x}))\|_\mc{F}=\sup_{H\in \mc{H}}\sum_{n=1}^\infty\|P_H(x_n)\|_\mc{F}\]
where $\mc{F}\subseteq\mc{H}\subseteq\overline{\mc{F}}$ and, in this case, $\|P_H(x)\|_\mc{F}=\|P_H(x)\|_1$ for every $x\in\mbb{R}^\NN$.

\smallskip
Reformulating the above, $X_\mc{F}$ does not contain a copy of $c_0$ iff the following holds: 
\[\tag{$\neg c_0$}\forall\text{ $\mc{F}$-supp. norm. bl. basic }\,(x_n)\,\text{ in }\,X_\mc{F}\;\sup_{A\in\overline{\mc{F}}}\sum_{n=1}^\infty\|P_A(x_n)\|_\mc{F}=\infty.\]

Now, just like ($S$), the property ($\neg c_0$) also has a weak, uniform version we will further discuss below: 
\[\tag{$U_{\neg c_0}$} \forall\text{ pairwise disjoint }(F_n)\text{ in }\mc{F}\setminus\{\0\}\;\sup_{A\in\overline{\mc{F}}}\sum_{n=1}^\infty\frac{|A\cap F_n|}{|F_n|}=\infty.\]

\subsection{Nowhere compactness and everywhere perfectness}

Theorem \ref{noell1} combined with Fact \ref{scattered-fact} says, in particular, that $X_\mc{F}$ does not contain $\ell_1$ iff $X_\mc{F}$ is $c_0$-saturated iff $\mc{F}$ is compact iff $\mc{F}$ is scattered iff $\overline{\mc{F}}$ is scattered. So, it is natural (but probably naive) to ask if any of the dual properties, lack of copies of $c_0$ or $\ell_1$-saturation, may be characterized by some sort of anti-compactness or perfectness. 

\smallskip
When looking for strong negations of compactness, the very first idea is probably the following: We say that $\mc{F}$ is \emph{nowhere compact} if $\mc{F}\clrest H$ is not compact in $\mc{P}(H)$ for any infinite $H\subseteq\mbb{N}$, i.e. 
\[\tag{nw cpt} \forall\,H\in [\mbb{N}]^{\infty}\;\exists\,A\in [H]^{\infty}\;[A]^{<\infty}\subseteq\mc{F}.\]
Of course, a non-compact family $\mc{F}$ is not necessarily nowhere compact, consider e.g. 
\[\mc{F}_\text{col}=\big\{F\in [\mbb{N}\times\mbb{N}]^{<\infty}:\exists\,n\;F\subseteq \{n\}\times\mbb{N}\big\}.\]
It is obviously non-compact but $\mc{F}_\text{col}\clrest \{(n,a_n):n\in\NN\}$ is compact for every sequence $a_n\in\NN$. Now, both ($U_S$) (because of trivial reasons) and ($\neg c_0$) (because of Theorem \ref{noell1}) imply nowhere compactness but unfortunately this property is very weak, it does not imply ($U_{\neg c_0}$) (see Example \ref{iFh-n2}).

\smallskip
Regarding perfectness, first of all, it would be convenient to have the following fact available.

\begin{fact}\label{perfect} Given $\mc{F}\in\mrm{FHC}$, the following are equivalent:
\begin{itemize}\setlength\itemsep{0.1cm}
\item[(a)] $\mc{F}$ is perfect in $[\NN]^{<\infty}$.
\item[(b)] $\overline{\mathcal{F}}$ is perfect in $\mc{P}(\NN)$.
\item[(c)] Every $F\in \mathcal{F}$ can be extended to an infinite $A \in \overline{\mathcal{F}}$.
\end{itemize}
\end{fact}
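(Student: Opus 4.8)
The plan is to prove the three equivalences by first noting that the homeomorphism $\mrm{FHC}\to\mrm{ADQ}$, $\mc{F}\mapsto\overline{\mc{F}}$ (with inverse $\mc{C}\mapsto\mc{C}\cap[\NN]^{<\infty}$) transports topological properties back and forth, so (a)$\Leftrightarrow$(b) should come almost for free once I check that this homeomorphism matches up isolated points on the two sides. Concretely, I would argue that $F$ is isolated in $\mc{F}$ (relative to the subspace $[\NN]^{<\infty}$, or equivalently relative to $2^\NN$ since $\mc{F}\subseteq[\NN]^{<\infty}$) if and only if $F$ is isolated in $\overline{\mc{F}}$: one direction is automatic since $\mc{F}\subseteq\overline{\mc{F}}$ means fewer points can accumulate in a subspace, and for the other direction, if $F$ is isolated in $\mc{F}$ via a basic clopen neighbourhood determined by finitely many coordinates, I need to rule out that some infinite $A\in\overline{\mc{F}}$ or some other finite $G\in\mc{F}$ sneaks into that neighbourhood; since being in $\overline{\mc{F}}$ forces all finite subsets to be in $\mc{F}$ (heredity), any such $A$ would produce finite elements of $\mc{F}$ near $F$, contradicting isolation. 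This gives (a)$\Leftrightarrow$(b), and then ``perfect'' just means ``closed with no isolated points'', where $\overline{\mc{F}}$ is closed by definition and $\mc{F}$ is closed in $[\NN]^{<\infty}$ iff it is closed in $2^\NN$ intersected appropriately — but here I should be careful: $\mc{F}$ need not be closed in $2^\NN$ (it may have infinite sets in its closure), so ``perfect in $[\NN]^{<\infty}$'' must mean closed-in-$[\NN]^{<\infty}$-and-crowded; I would spell this out.

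For (b)$\Leftrightarrow$(c), the forward direction is straightforward: if $\overline{\mc{F}}$ is perfect and $F\in\mc{F}\subseteq\overline{\mc{F}}$, then $F$ is not isolated in $\overline{\mc{F}}$, so there are elements of $\overline{\mc{F}}$ arbitrarily close to $F$ agreeing with $F$ on larger and larger initial segments; a standard fusion/diagonalization along these approximants produces a strictly increasing chain $F=A_0\subsetneq A_1\subsetneq\cdots$ inside $\overline{\mc{F}}$ (using heredity of $\overline{\mc{F}}$ to throw away unwanted coordinates), whose union $A$ is infinite, and $A\in\overline{\mc{F}}$ since $\overline{\mc{F}}$ is closed and $A$ is the limit of the $A_n$ — actually I should instead build $A$ so that every finite subset lies in $\mc{F}$, which is what $A\in\overline{\mc{F}}$ amounts to. The reverse direction (c)$\Rightarrow$(b): given (c), I must show $\overline{\mc{F}}$ has no isolated points. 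Take any $B\in\overline{\mc{F}}$. If $B$ is infinite, then the finite initial segments $B\cap\{1,\dots,n\}$ are in $\overline{\mc{F}}$ (heredity) and converge to $B$, so $B$ is not isolated. If $B$ is finite, then $B\in\mc{F}$, so by (c) there is an infinite $A\in\overline{\mc{F}}$ with $B\subseteq A$; then the sets $B\cup(A\cap\{1,\dots,n\})$ lie in $\overline{\mc{F}}$ and converge to the infinite set $B\cup A = A$... but that limit might equal $B$ only if $A=B$, which is impossible since $A$ is infinite, so these sets are eventually distinct from $B$ and converge to $A\ne B$ — wait, I want them converging to $B$, not to $A$. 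The right move: use $A\cap[1,n]$ for $n\ge\max(B)$ which all contain $B$ for large $n$ and are pairwise distinct, strictly increasing, hence none equals $B$ once $n$ is large enough that $A\cap[1,n]\supsetneq B$; these are in $\overline{\mc{F}}$ and their ``limit from below'' — hmm, they increase to $A$. Let me instead exhibit a sequence converging to $B$: the sets $A\cap[1,n]$ do NOT converge to $B$. The correct construction is: for each $k$ pick $a_k\in A\setminus B$ with $a_k\to\infty$ (possible, $A$ infinite), and set $C_k = B\cup\{a_k\}\in\overline{\mc{F}}$ (subset of $A\in\overline{\mc{F}}$, use heredity); then $C_k\ne B$ and $C_k\to B$ in $\mc{P}(\NN)$. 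So $B$ is not isolated, proving (b).

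The main obstacle I anticipate is being pedantic about \emph{which ambient space} each perfectness claim refers to — $[\NN]^{<\infty}$ with its subspace topology from $2^\NN$ is not compact and not closed in $2^\NN$, so ``perfect'' there cannot mean the same thing as ``perfect in the compact space $\mc{P}(\NN)$''. I expect the paper intends ``perfect in $[\NN]^{<\infty}$'' to mean ``$\mc{F}$ is relatively closed in $[\NN]^{<\infty}$ and has no isolated points'', and ``perfect in $\mc{P}(\NN)$'' to mean the genuine notion (closed, crowded) — and the content of (a)$\Leftrightarrow$(b) is precisely that these match up under $\mc{F}\leftrightarrow\overline{\mc{F}}$, including that relative closedness of $\mc{F}$ in $[\NN]^{<\infty}$ is automatic here (since $\mc{F}=\overline{\mc{F}}\cap[\NN]^{<\infty}$ always, as $\mc{F}$ is hereditary). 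Getting that bookkeeping exactly right, and making sure every accumulation argument actually lands inside $\overline{\mc{F}}$ via heredity rather than merely inside the abstract closure, is where the care is needed; the rest is routine diagonalization.
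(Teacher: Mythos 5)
Your proposal is correct and rests on the same key observations as the paper's proof: heredity makes every infinite $A\in\overline{\mc{F}}$ a limit of its finite subsets (so infinite points are never isolated and $\mc{F}=\overline{\mc{F}}\cap[\NN]^{<\infty}$ is relatively closed), and an infinite $A\supseteq F$ yields $F\cup\{a_k\}\to F$. The only organizational difference is in (b)$\to$(c), where you build the infinite extension directly by a fusion recursion while the paper argues by contraposition via a finite maximal element of $\overline{\mc{F}}$ that would be isolated; both are fine.
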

\begin{proof}
(a)$\to$(b): Given an $A\in\overline{\mc{F}}\setminus \mc{F}$, we know that $A$ is infinite and $[A]^{<\infty}\subseteq \mc{F}$, and, of course, $A\in \overline{[A]^{<\infty}}$. (b)$\to$(c): Assuming there is no such $A\in\overline{\mc{F}}$, every $F'\in\max(\overline{\mc{F}})$  covering $F$ is finite, and for such an $F'$ the set $\{S\subseteq\NN:F'\subseteq S\}$ is open and witnesses that $F'$ is an isolated point of $\overline{\mc{F}}$. (c)$\to$(a): $\mc{F}=\overline{\mc{F}}\cap [\NN]^{<\infty}$ is always closed in $[\NN]^{<\infty}$. If $F\in\mc{F}$ and $A=\{a_1<a_2<\dots\}\in\overline{\mc{F}}$ covers $F$, then $F\cup \{a_n\}\in\mc{F}$ converges to $F$.
\end{proof}

This fact basically says that perfectness means that $\mathcal{F}$ is ``induced'' by a family of infinite subsets of $\mathbb{N}$, namely, by  $\overline{\mathcal{F}} \cap [\mathbb{N}]^\infty$. Just like in the case of anti-compactness, we may wish to go further and define the following: We say that $\mc{F}$ is \emph{everywhere perfect} if $\mc{F}\clrest H$ is perfect in $[H]^{<\infty}$ for every infinite $H\subseteq \NN$, i.e. $\overline{\mc{F}}\clrest H=\overline{\mc{F}\clrest H}$ is perfect in $\mc{P}(H)$ for every infinite $H\subseteq\NN$, i.e.
\[\tag{ew pft} \forall\,H\in[\NN]^{\infty}\;\forall\,F\in \mathcal{F}\clrest H\;\exists\,A\in\overline{\mc{F}}\cap [\NN]^\infty\;F\subseteq A\subseteq H.\] 
Of course, (everywhere) perfect families are (nowhere) compact but everywhere perfectness is still too weak, it still does not imply ($U_{\neg c_0}$) (see Example \ref{iFh-n2}). At the same time, somehow, when looking for examples of combinatorial spaces without copies of $c_0$, it seems natural to look among everywhere perfect families (see also Section \ref{quesec}).
 
Also, there are no further implications between (everywhere) perfectness and (nowhere) compactness: The family $\mc{F}_\text{col}$ witnesses that a perfect family can easily have compact restrictions, and if   
\[ \mc{F}_\text{oe}=\big[\{\text{odd numbers}\}\big]^{<\infty}\cup\big[\{\text{even numbers}\}\big]^{<\infty}\] 
then $\mc{F}_\text{oe}\cup \{\{1,2\}\}$ is nowhere compact (actually, it satisfies ($S^*$) as well) but $\{1,2\}$ is an isolated point. This example also shows that perfectness is somehow too specific, it is very easy to ruin simply by adding an isolated point. Notice that this manipulation is cheap in the sense that, though, $\mc{F}_\text{oe}\cup\{\{1,2\}\}$ is not perfect, it is, of course, equivalent to the everywhere perfect family $\mc{F}_\text{oe}$  (for related questions see Section \ref{quesec}). 

\begin{rem}
The notion of everywhere perfectness can be seen as an extension of  Ellentuck-perfectness from $[\mbb{N}]^\infty$ to $\mc{P}(\NN)$:  Recall that the Ellentuck topology on $[\mathbb{N}]^\infty$ is generated by the basic sets of the form 
\[ \langle E, n, H\rangle = \big\{B\in [\NN]^\infty: E \subseteq B \subseteq E\cup H\big\}\] where $E\subseteq\{1,2,\dots,n\}$ and $H\subseteq \NN\setminus\{1,\dots,n\}$ is infinite. This topology on $[\NN]^\infty$ is finer than the one inherited from the product topology on $\mc{P}(\NN)$ because if 
\[ [E,n]=\big\{B\in\mc{P}(\NN):E=\{1,2,\dots,n\}\cap B\big\}\] stands for the usual basic open set in $\mc{P}(\NN)$, $E\subseteq\{1,2,\dots,n\}$, then $[E,n]\cap [\NN]^\infty=\la E,n,(n,\infty)\ra$. 

It turns out that if $\mc{F}\in\mrm{FHC}$, then $\overline{\mc{F}}\cap [\NN]^\infty$ is either empty or Ellentuck-perfect: If $A\in \la E,n,H\ra\cap \overline{\mc{F}}$ and $k\in A\setminus E$ then $A\ne A\setminus \{k\}\in \la E,n,H\ra\cap\overline{\mc{F}}$. To extend the meaning of Ellentuck-perfectness to $\mc{P}(\NN)$ (that is, to $\mc{F}$ and to $\overline{\mc{F}}$) we shall backtrack how we obtain usual perfectness in $\mc{P}(\NN)$ from the product topology on $[\NN]^{\infty}$: A non-empty hereditary $\mc{C}\subseteq\mc{P}(\NN)$ is perfect iff 
\[ \forall\text{ basic open }\,[E,n]\;\big(E\in \mc{C}\longrightarrow [E,n]\cap [\NN]^\infty\cap \mc{C}\ne\0\big).\] 
Now, extending Ellentuck-perfectness to $\mc{P}(\NN)$ the same way would be
\[ \forall\text{ basic open }\,\la E,n,H\ra\;\big(E\in \mc{C}\longrightarrow \la E,n,H\ra\cap [\NN]^\infty\cap \mc{C}\ne\0\big),\]
and this property happens to be equivalent to everywhere perfectness. 
\end{rem}

Let us summarize the implications between the properties we have encountered in this section:
\begin{diagram}
S^*& & & & & & \text{ew pft}\\
\dTo& & & & & & \dTo \\
S & \rTo & \ell_1\text{-sat.} & \rTo & \neg c_0 & \rTo & \text{nw cpt} \\
\dTo &   & &  &  \dTo & \ruTo(6,2) \\
U_S &  & \rTo & & U_{\neg c_0}
\end{diagram}
Most properties we would like to characterize are located between ($S^*$) and ($U_{\neg c_0}$) which do not seem to be drastically far away from each other. On the other hand, apart from two questions (namely, if either $(S)\to (S^*)$ or $(U_{\neg c_0})\to$(nw cpt) hold, see Section \ref{quesec}), the examples in the next section (together with the easy ones from this section) show that in the realm of combinatorial spaces there are no further implications between these properties.

\section{Walking in the Zoo}\label{walking}

\subsection{Compact families.}\label{compact} As we have mentioned, combinatorial Banach spaces induced by compact families have been intensively studied. So, we will only briefly overview the classical examples.
The most classical one is, of course, $c_0$. It is generated by the family $[\NN]^{\leq 1}$. Notice also that $\mrm{FIN}([\NN]^{\leq 1})=\ell_\infty$.
\smallskip 

In fact, $c_0$ is an instance of the generalized Schreier spaces. 

\begin{exa}[Schreier spaces] \label{exa-Schreier}
For $\al<\om_1$ let $\mc{S}_\al$ be the $\al$th Schreier family on $\NN$, for example, $\mc{S}_0=[\NN]^{\leq 1}$, $\mc{S}_1=\mc{S}=\{\0\}\cup \{F\subseteq\NN:|F|\leq\min(F)\}$ is the classical Schreier family, 
\[ \mc{S}_2=\{\0\}\cup\bigg\{\bigcup_{i=1}^nE_i:E_i\in\mc{S}_1\setminus\{\0\}\,\text{ and }\,\{n\}<E_1<E_2<\dots<E_n\bigg\},\] 
where $E<F$ iff $\max(E)<\min(F)$ ($E,F\in [\NN]^{<\infty}\setminus\{\0\}$), etc. These families are compact (see e.g. \cite{Alspach-Argyros} for much more details).
\end{exa}

Another well-known example of a Banach space induced by a compact family is the following.

\begin{exa}[$c_0$- and $\ell_\infty$-products]\label{exa-c_0-product} Fix a partition $(V_n)$ of $\NN$ into non-empty finite sets. Let $\mc{Q}_n$ be a hereditary cover of $V_n$ and let $\mc{Q} = \bigcup_n \mc{Q}_n$. Then $\mc{Q}$ is compact and $X_\mc{Q}=\mrm{EXH}(\mc{Q})$ is (isomorphic to) the $c_0$-product of $(X_{\mc{Q}_n})$. E.g. if $\mathcal{Q}_n = \mathcal{P}(V_n)$ then $X_{\mc{Q}_n}=\ell_1(V_n)$ is $\mathbb{R}^{V_n}$ equipped with the $\ell_1$ norm. Similarly,  $\mrm{FIN}(\mc{Q})$ is the $\ell_\infty$-product of $(X_{\mc{Q}_n})$.
\end{exa}

Infinite elements in the partition above lead to the simplest non-compact families, see e.g. $\mc{F}_\text{col}$ and $\mc{F}_\text{oe}$ in the previous section.

\subsection{Creatures living on trees.} Now we will present two spaces defined by H. Rosenthal, the stopping time space $S$ and its ``separable dual'' $B$ (see e.g. \cite{BangOdell} and \cite{Dew}). Both of them are combinatorial spaces induced by families on the dyadic tree.

\begin{exa}[the space $S$]\label{exa-antichains}  Let 
	\[ \mc{A}=\big\{F\subseteq 2^{<\NN}:F\,\text{ is a finite antichain}\big\}.\] The space $S=X_{\mc{A}}$ is called the \emph{(dyadic) stopping time space}. H. Rosenthal proved that it contains copies of all $\ell_p$ spaces, $1\leq p<\infty$ (see \cite[Section 6]{BangOdell} and \cite[Secton 7.6]{Dew}). Therefore, the naive impression that combinatorial spaces (simple amalgamations of $c_0$ and $\ell_1$) are $\{c_0,\ell_1\}$-saturated, that is, all infinite dimensional closed subspaces contain copies of either $c_0$ or $\ell_1$, is false.

\smallskip
The space $X_\mc{A}$ was also studied in \cite{BNF20} (though the authors were not aware of its history) as an analog of the trace of the null ideal. For example, it was shown that $\mrm{FIN}(\mc{A})$ contains a canonical isometric copy of the Banach space $M(2^\NN)$ of finite signed Borel measures on $2^\NN$ equipped with the total variation norm: For $\mu\in M(2^\NN)$ define $x_\mu:2^{<\NN}\to\mbb{R}$, $x_\mu(t) = \mu([t])$ where $[t]=\{\eps\in 2^\NN:\eps$ extends $t\}$ is the basic clopen set generated by $t$.
	Then $x_\mu\in\mrm{FIN}(\mc{A})$ and $\| x_\mu \|_{\mc{A}}=\|\mu\|_\mrm{tv}$. 
\end{exa}

\begin{exa}[the space $B$]\label{exa-chains} Let 
\[ \mc{C}=\big\{E\subseteq 2^{<\NN}:E\,\text{ is a finite chain}\big\}.\] 
The space $B=X_{\mc{C}}$ is also well-studied, for example, we know that $X_\mc{C}$ contains isometric copies of all the Banach spaces with unconditional basis (see \cite[Theorem 2]{BangOdell}). The combinatorial symmetry between chains and antichains lifts to a symmetry between $X_\mc{C}$ and $X_\mc{A}$, namely, $X_\mc{A}^*=\mrm{FIN}(\mc{C})$ and $X_\mc{C}^*=\mrm{FIN}(\mc{A})$ (see \cite[Proposition 1]{BangOdell}). It follows that if $(a_t)$ ($t\in 2^{<\NN}$) is the canonical basis of $X_\mc{A}$ and $(c_t)$ is of $X_\mc{C}$, then $[(a^*_t)]=X_\mc{C}$ and $[(c^*_t)]=X_\mc{A}$.

\smallskip
Like in the case of the family $\mc{A}$, we can find a canonical copy of a classical Banach space, this time of $C(2^\mathbb{N})$, in $\mrm{FIN}(\mc{C})$ (see \cite[Theorem 2]{BangOdell}).
\end{exa}

The above results allow us to say, somewhat informally, that the space $B$ is the combinatorial version of $C(2^\mathbb{N})$ and $S$ is the combinatorial version of $C(2^\mathbb{N})^*\simeq M(2^\NN)$.

\subsection{Farah families}\label{farah-families} 
This class of families is motivated by the definition of an analytic P-ideal due to Farah (see \cite{Farah}). We start with a modified version of the original family he used in his example.

\begin{exa}[Farah family]\label{exa-farah} Let
\[ \mrm{Fh}=\bigg\{F\in [\NN]^{<\infty}:\big|F\cap [2^{n-1},2^n)\big|\leq \frac{2^{n-1}}{n}\,\text{ for every }\,n\geq 1\bigg\}.\] 
Then $\mrm{Fh}$ satisfies ($S^*$), it is everywhere perfect, and $X_\mrm{Fh}$ is not isomorphic to $\ell_1$ (see \cite{BNF20}).
\end{exa}

In general, given a partition $(V_n)$ of $\NN$ (or of any countable set) into non-empty finite sets and hereditary covers $\mc{Q}=(\mc{Q}_n)$, $\mc{Q}_n$ of $V_n$, we define the associated \emph{Farah family} as follows:
\[ \mrm{Fh}(\mc{Q})=\big\{F\in [\NN]^{<\infty}:F\cap V_n\in\mc{Q}_n\,\text{ for every }\,n\big\}.\]
The family $\mrm{Fh}(\mc{Q})$ also satisfies ($S^*$), it is everywhere perfect, and the space $X_{\mrm{Fh}(\mc{Q})}$ is (isomorphic to) the $\ell_1$-product of $(X_{\mc{Q}_n})$. 

\begin{rem}
Clearly, the presentation of $\mrm{Fh}(\mc{Q})$ is not unique. At the same time, there is an easy combinatorial characterization of these families, and it also provides us with a unique presentation: Given $\mc{F}\in\mrm{FHC}$, we say that a finite set $H\subseteq\NN$ is \emph{$\mc{F}$-decomposable} if there is a partition $H=H_0\cup H_1$ into non-empty sets such that $\mc{F}\clrest H=\{E_0\cup E_1:E_i\in\mc{F}\clrest H_i\}$, otherwise we say that $H$ is \emph{$\mc{F}$-indecomposable}. Let $\mrm{ID}(\mc{F})$ be the family of all $\mc{F}$-indecomposable sets and $\max(\mrm{ID}(\mc{F}))$ consist of the maximal such sets (i.e. the maximal elements in $(\mrm{ID}(\mc{F}),\subseteq)$). 

For example, $\mrm{ID}([\NN]^{\leq 1})=[\NN]^{<\infty}$ and $\max(\mrm{ID}([\NN]^{\leq 1}))=\0$; $\mrm{ID}([\NN]^{<\infty})=[\NN]^{\leq 1}$ and so 
$\max(\mrm{ID}([\NN]^{<\infty}))=[\NN]^1$; 
\[\mrm{ID}(\mc{S})=[\NN]^{\leq 1}\cup \big([\NN]^{<\infty}\setminus\mc{S}\big)\] and hence $\max(\mrm{ID}(\mc{S}))=\0$; and finally, 
\[ \mrm{ID}(\mrm{Fh})=[\NN]^{\leq 1}\cup\bigcup_{n\geq 1}\bigg\{H\subseteq I_n:|H|>\frac{2^{n-1}}{n}\bigg\}\] and so $\max(\mrm{ID}(\mrm{Fh}))=\{I_n:n\geq 1\}$. 

It is straightforward to show the following: (1) Maximal $\mc{F}$-indecomposable sets are either disjoint or coincide. (2) A family $\mc{F}$ is of the form $\mrm{Fh}(\mc{Q})$ for some $\mc{Q}$ iff $\max(\mrm{ID}(\mc{F}))$ covers $\NN$; and in this case, $\mc{F}$ is the Farah family generated by $(\mc{F}\clrest H:H\in\max(\mrm{ID}(\mc{F})))$.
\end{rem}

\begin{exa}[a strange case of $X_\mc{F}\simeq\ell_1$, Bal\'azs Keszegh]\label{exa-balazs} This example suggests that characterizing those $\mc{F}$ such that $X_\mc{F}$ is isomorphic to $\ell_1$, i.e. $(e_n)$ in $X_\mc{F}$ is equivalent to the usual basis of $\ell_1$, is pretty far from obvious. 

For the rest of this example fix $m>1$. For $k\in\NN$ define the following hypergraph on $V_{k}=[\{1,2,\dots,mk\}]^k$, that is, a collection of subsets, called (hyper)edges, of the underlying set $V_{k}$: 
\[ \mc{H}_{k}=\big\{H^k_i=\{a\in V_{k}:i\in a\}:i=1,2,\dots,mk\big\}.\]
Now, let $\mc{Q}_k$ be the hereditary closure of $\mc{H}_{k}$, $\mc{Q}=(\mc{Q}_k)$, and $V=\bigcup_k V_k$. We show that $X_{\mrm{Fh}(\mc{Q})}$ is $m$-isomorphic to $\ell_1(V)$, i.e. that $X_{\mc{Q}_k}$ is $m$-isomorphic to $\ell_1(V_k)$: If $x\in \mbb{R}^{V_k}$ then
\[ \|x\|_1=\sum_{a\in V_{k}}|x(a)|= \frac{1}{k}\sum_{i=1}^{mk}\sum_{a\in H^k_i}|x(a)|\leq \frac{1}{k}mk\|x\|_{\mc{H}_{k}}=m\|x\|_{\mc{H}_{k}}\] 
where the second equality holds because each $a\in V_{k}$ is covered by exactly $|a|=k$ many edges.

\smallskip 
Why is this interesting? Notice, that if $I\subseteq \{1,2,\dots,mk\}$ and $|I|=(m-1)k$, then $\{H^k_i:i\in I\}$ does not cover $V_{k}$ because $V_{k}\setminus\bigcup_{i\in I}H^k_i=\{\{1,2,\dots,mk\}\setminus I\}$ (it also follows that any $(m-1)k+1$ many edges cover $V_k$). If we increase $k$, then we need more and more sets from $\mc{Q}_{k}$ to cover $V_{k}$, and seemingly this yields that $\|\bullet\|_{\mc{H}_k}$ is getting further and further from $\|\bullet\|_1$; yet somehow, according to the above, it is not.
\end{exa}

\subsection{Farah families with intervals}\label{farah-families-1} Let $V_n$, $\mc{Q}_n$, and $\mc{Q}$ be like in the definition of Farah families and define
\begin{align*} \mrm{iFh}(\mc{Q})&=\big\{F\in [\NN]^{<\infty}:F\cap V_n \in \mc{Q}_n\,\text{ for all but possibly one $n$}\big\}\\
&=\big\{F\cup E:F\in \mrm{Fh}(\mc{Q})\,\text{ and }\,E\subseteq V_n\,\text{ for some }\,n\big\}.
\end{align*} 
Adding the intervals to $\mrm{Fh}(\mc{Q})$ may look like a cosmetic modification but this extension can change the resulting combinatorial space quite fundamentally; of course, $\mrm{iFh}(\mc{Q})$ is still everywhere perfect. We begin with the modification of the original Farah family $\mrm{Fh}$.

\begin{exa}[$\mrm{Fh}$ with intervals]\label{exa-spoiled-farah}
For $n\geq 1$ let $I_n=[2^{n-1},2^n)$ and 
\[ \mrm{iFh}=\big\{F\cup E:F\in \mrm{Fh}\,\text{ and }\,E\subseteq I_n\,\text{ for some }\,n\big\}.\]
It is easy to check that $\mrm{iFh}$ satisfies ($U_{\neg c_0}$) but not ($U_S$). We show that $X_\mrm{iFh}$ contains a copy of $c_0$: Indeed, the normalized block basis sequence $x_n=2^{-2^n}\chi_{I_{2^n+1}}\in X_{\mrm{iFh}}$ is equivalent to the canonical basis of $c_0$ because 
\[ \bigg\|\sum_{n=1}^mx_n\bigg\|_{\mrm{iFh}}\leq\bigg\|\sum_{n=1}^mx_n\bigg\|_\mrm{Fh}+1\leq \sum_{n=1}^m2^{-2^n}\frac{2^{2^n+1-1}}{2^n+1}+1< 2.\]
\end{exa}

\begin{exa}[``$n^2$ Farah'' with intervals]\label{iFh-n2} 
We show an example of an $\mrm{iFh}(\mc{Q})$ family which does not satisfy ($U_{\neg c_0}$). Let $V_n=[3^{n-1},3^n)$ and 
\[ \mc{Q}_n=\bigg\{F\subseteq V_n:\big|F\cap V_n\big|\leq \frac{|V_n|}{n^2}\bigg\}.\]
Notice that $2\cdot 3^{n-1}/n^2\geq 1$ hence $\mc{Q}_n$ covers $V_n$. It is trivial to check that the sequence $(V_n)$ witnesses that $\mrm{iFh}(\mc{Q})$ does not satisfy ($U_{\neg c_0}$).
\end{exa}

\begin{exa}[``$\mc{S}_1$ in $\mc{S}_2$'']\label{exa-S1S2} We will construct an $\mrm{iFh}(\mc{Q})$ family which satisfies ($U_S$) but $X_{\mrm{iFh}(\mc{Q})}$ contains a copy of $c_0$. Let $(I_n)_{n\geq 1}$ be the decomposition of $\NN$ into maximal intervals from $\mc{S}_2$ (the $2$nd Schreier family). In other words, $I_n=\bigcup_{j=1}^{\min(I_n)} I^n_j$ where $I^n_j$ are maximal consecutive intervals from $\mc{S}_1$. Define
\[ \mc{Q}_n=\bigg\{E\subseteq I_n:E\,\text{ can be covered by }\,\leq\frac{2\min(I_n)}{n^2}\,\text{ many sets from }\,\mc{S}_1\bigg\}.\] 

We claim that $\mrm{iFh}(\mc{Q})$ is as desired. Why $s_n:=\lfloor 2\min(I_n)/n^2\rfloor$? Because we need that $s_n\geq 1$ to make sure $\mc{Q}_n$ covers $I_n$, and we will use below that $s_n\to \infty$ and $\sum_{n=1}^\infty \frac{s_n}{\min(I_n)}<\infty$. 

\smallskip
$\mrm{iFh}(\mc{Q})$ satisfies ($U_S$): Notice that if $E\notin\mc{S}_1$ is finite and $E=E_1\cup E_2\cup\dots \cup E_m$ is a decomposition into consecutive non-empty sets from $\mc{S}_1$, all of them maximal except perhaps the last one, then $|E_{m-1}\cup
	E_m|> |E|/2$. In general (by induction), assuming that $m>d$ we have \[ |E_{m-d}\cup E_{m-d+1}\cup\dots \cup E_m|>|E|(1-2^{-d}).\] 
Now let $(F_k)$ be a sequence of pairwise disjoint sets from $\mrm{iFh}(\mc{Q})\setminus\{\0\}$, we will find an $A\in\overline{\mrm{Fh}(\mc{Q})}\subseteq\overline{\mrm{iFh}(\mc{Q})}$ such that $\limsup_k|A\cap F_k|/|F_k|=1$. We can assume that 
\[ \big\{n:F_1\cap I_n\ne\0\}<\big\{n:F_2\cap I_n\ne\0\big\}<\dots.\] 
If $F_k\cap I_n\in\mc{Q}_n$ for every $n$, then we can add this $A_k=F_k$ to the desired $A$. Now assume that $F_k\cap I_{n_k}\notin\mc{Q}_{n_k}$ for a fixed $n_k$, $F_k\cap I_{n_k}=E_1\cup E_2\cup\dots E_m$ is a decomposition as above, it follows that $m>s_{n_k}$. Let 
\[ A'_k=E_{m-s_{n_k}+1}\cup E_{m-s_{n_k}+2}\cup\dots \cup E_m\in \mc{Q}_{n_k},\] 
and $A_k=A'_k\cup (F_k\setminus I_{n_k})$. Then $A_k\in\mrm{Fh}(\mc{Q})$ and  
\[ \frac{|A_k\cap F_k|}{|F_k|}\geq \frac{|A_k\cap F_k\cap I_{n_k}|}{|F_k\cap I_{n_k}|}=\frac{|A'_k|}{|F_k\cap I_{n_k}|}> 1-2^{-s_{n_k}+1}.\]
It follows that $A=\bigcup_kA_k\in\overline{\mrm{Fh}(\mc{Q})}$ and $\limsup_k |A\cap F_k|/|F_k|=1$.

\smallskip
$X_{\mrm{iFh}(\mc{Q})}$ contains a copy of $c_0$: We will need the second repeated average sequence $\lam^2_n\in [0,\infty)^\NN$, $n\geq 1$ (see e.g. \cite{Argyros98}) but instead of defining it precisely, let us only recall the properties we need. First of all, $\mrm{supp}(\lam^2_n)=I_n$ and $\|\lam^2_n\|_{\mrm{iFh}(\mc{Q})}=\sum_{k\in I_n}\lam^2_n(k)=1$, in particular, $(\lam^2_n)$ is a normalized block sequence in $X_{\mrm{iFh}(\mc{Q})}$. Furthermore, if $\lam^2$ is the formal sum of $(\lam^2_n)$, $G\in\mc{S}_1\setminus\{\0\}$, and $\min(G)\in I^n_j$, then 
\[ \sum_{k\in G}\lam^2(k)\leq \sum_{k\in I^n_j}\lam^2(k)=\sum_{k\in I^n_j}\lam^2_n(k)=\frac{1}{\min(I_n)}.\] 
It follows that  
\[ \bigg\|\sum_{n=1}^m\lam^2_n\bigg\|_{\mrm{iFh}(\mc{Q})}\leq 1+\sum_{n=1}^m\frac{2\min(I_n)}{n^2 \min(I_n)}< 5.\] 
\end{exa}

\subsection{The rapid Farah family}\label{rapid-farah} The following modification of $\mrm{Fh}$ opened Pandora's box: As above, let $I_n=[2^{n-1},2^n)$, given a set $A\subseteq\NN$ let $D_A=\{n\in\NN:A\cap I_n\ne\0\}$ and 
\[ s_A:\big\{1,2,\dots,|D_A|\big\}\to D_A\] 
be its increasing enumeration (for $A=\0$ let $s_A$ be the empty sequence), and define the \emph{rapid Farah family} as 
\[ \mrm{rFh}=\bigg\{F\in [\NN]^{<\infty}:\big|F\cap I_{s_F(n)}\big|\leq\frac{2^{s_F(n)-1}}{n}\,\text{ for every }\,n\in\dom(s_F)\bigg\}.\]
It is easy to see that $(I_n)$ witnesses that $\mrm{rFh}$ fails to satisfy ($U_S$), hence $X_{\mrm{rFh}}$ does not have the Schur property. We will show that the space $X_\mrm{rFh}$ is $\ell_1$-saturated. There are many other known examples of such spaces (see e.g. \cite{Hagler}, \cite{Popov}) but most of them are quite involved, certainly more complicated than $X_{\mrm{rFh}}$.

\smallskip
First of all, let us introduce some notations: For $1\leq l\leq 2^{n-1}$ we define the seminorm $\|\bullet\|_{n,l}$ on $\mbb{R}^\NN$ as 
\[ \|x\|_{n,l}=\max\bigg\{\sum_{i\in F}|x(i)|:F\subseteq I_n\,\text{ and }\,|F|\leq \frac{|I_n|}{l}\bigg\}.\]
Of course, $\|\bullet\|_{n,l}$ is a norm on $\{x\in\mbb{R}^\NN:\mrm{supp}(x)\subseteq I_n\}\simeq\mbb{R}^{I_n}$. For example, $\|x\|_\mrm{Fh}=\sum_{n=1}^\infty\|x\|_{n,n}$ and 
\[ \|x\|_\mrm{rFh}=\sup\bigg\{\sum_{k=1}^\infty\|x\|_{n_k,k}:(n_k)\in\NN^\NN\,\text{ is strictly increasing}\bigg\}.\] 

We prove an easy observation basically saying that the sequence $\|x\|_{n,l}$ does not decrease too fast in $l$ assuming $l\ll 2^n$: 
\begin{fact}
If $x\in\mbb{R}^\NN$, $1\leq l\leq l'$, and $(l'+1)^2\leq 2^{n-1}$, then
\[\tag{$\star$} \|x\|_{n,l'} \geq \dfrac{l}{l'+1} \|x\|_{n,l}.\]
\end{fact}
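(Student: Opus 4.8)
The plan is to prove $(\star)$ by a direct counting argument comparing the two seminorms. Recall that $\|x\|_{n,l}$ is realized by taking the $\lfloor |I_n|/l\rfloor$ largest (in absolute value) coordinates of $x$ inside $I_n$. So first I would fix $x$ with $\mathrm{supp}(x)\subseteq I_n$ (the general case reduces to this since both sides only see $P_{I_n}(x)$), and fix a set $F\subseteq I_n$ with $|F|\le |I_n|/l$ achieving $\|x\|_{n,l}=\sum_{i\in F}|x(i)|$. The goal is to extract from $F$ a small enough number of coordinates that still carry a large portion of this sum, or rather to cover $F$ by a bounded number of ``$(n,l')$-admissible'' blocks and argue one of them is heavy.

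The key step: set $p=\lfloor |I_n|/l\rfloor$ and $q=\lfloor |I_n|/l'\rfloor$, so $|F|\le p$ and an $(n,l')$-admissible set has size at most $q$. Partition $F$ (after ordering its elements by decreasing $|x(\cdot)|$) into consecutive chunks of size $q$; this produces at most $\lceil p/q\rceil$ chunks. By the pigeonhole/averaging principle, the heaviest chunk $F'$ satisfies $\sum_{i\in F'}|x(i)|\ge \frac{1}{\lceil p/q\rceil}\sum_{i\in F}|x(i)| = \frac{\|x\|_{n,l}}{\lceil p/q\rceil}$, and since $F'\subseteq I_n$ with $|F'|\le q\le |I_n|/l'$ we get $\|x\|_{n,l'}\ge \|x\|_{n,l}/\lceil p/q\rceil$. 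It remains to check $\lceil p/q\rceil \le (l'+1)/l$ under the hypothesis $(l'+1)^2\le 2^{n-1}=|I_n|/... $ — here I would use $p=\lfloor |I_n|/l\rfloor \le |I_n|/l$ and $q=\lfloor |I_n|/l'\rfloor \ge |I_n|/l' - 1 \ge |I_n|/(l'+1)$ (the last inequality is where $(l'+1)^2\le 2^{n-1}\le |I_n|$ enters, guaranteeing $|I_n|/l'-1 \ge |I_n|/(l'+1)$, equivalently $|I_n|\ge l'(l'+1)$). Then $p/q \le \frac{|I_n|/l}{|I_n|/(l'+1)} = (l'+1)/l$, and a short argument (or a slightly more careful choice of chunk sizes) upgrades this to $\lceil p/q\rceil \le (l'+1)/l$ as needed.

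The main obstacle I anticipate is the bookkeeping with the floor functions: making sure the chunk-size choice is exactly right so that the ceiling $\lceil p/q\rceil$ is controlled by $(l'+1)/l$ rather than something slightly larger like $(l'+1)/l + 1$. The clean way around this is probably \emph{not} to use chunks of uniform size $q$, but instead to split $F$ into exactly $\lceil l'/l\rceil$ or so pieces directly sized by the ratio, or even better: pick the single subset of $F$ of size exactly $\min(|F|,q)$ consisting of the top coordinates, and bound the ratio $\sum_{F}|x|/\sum_{\text{top }q}|x|$ directly by $|F|/q \le p/q$ using that the top $q$ coordinates are each at least as large as the average of all of $F$. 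That gives $\|x\|_{n,l'} \ge \frac{q}{p}\|x\|_{n,l} \ge \frac{l}{l'+1}\|x\|_{n,l}$ in one line once $q/p \ge l/(l'+1)$ is verified, which is exactly the inequality $|I_n|\ge l'(l'+1)$ rearranged. This second route avoids ceilings entirely and is the version I would actually write down.
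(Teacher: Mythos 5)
Your second route (the one you say you would actually write down) is correct and is essentially the paper's own argument: both rest on the elementary fact that for nonnegative values sorted in decreasing order the top $K'$ of $K$ terms carry at least a $K'/K$ fraction of the sum, applied with $K=\lfloor 2^{n-1}/l\rfloor$ and $K'=\lfloor 2^{n-1}/l'\rfloor$, followed by the same floor arithmetic where $(l'+1)^2\leq 2^{n-1}$ absorbs the loss of $1$ from the floor. Your instinct to discard the chunking/pigeonhole version was right, since $\lceil p/q\rceil$ is not cleanly bounded by $(l'+1)/l$; the prefix-sum version you settle on needs no such care.
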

\begin{proof}
It is trivial to check that if $1\leq K'\leq K$, $v\in \mbb{R}^K$, and $v(1)\geq v(2)\geq\dots\geq v(K)\geq 0$, then $(v(1)+\dots+v(K'))/(v(1)+\dots+v(K))\geq K'/K$. It follows that
\begin{align*} 
\frac{\|x\|_{n,l'}}{\|x\|_{n,l}}\geq& \frac{\lfloor2^{n-1}/l'\rfloor}{\lfloor 2^{n-1}/l\rfloor} \geq \frac{2^{n-1}/l'-1}{2^{n-1}/l}= \frac{l}{l'} - \frac{l}{2^{n-1}} \\
\geq& \frac{l}{l'} - \frac{l}{(l'+1)^2}> \frac{l\cdot l'\cdot(l'+1)}{l'\cdot(l'+1)^2} = \frac{l}{l'+1}.\qedhere
\end{align*} 
\end{proof}

\begin{thm}\label{elel}
The space $X_{\mrm{rFh}}$ is $\ell_1$-saturated.
\end{thm}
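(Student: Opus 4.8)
\textit{Plan.} By the Selection Principle (\cite[Corollary C.2]{Bessaga58}) every infinite dimensional closed subspace of $X_\mrm{rFh}$ contains, up to $(1+\eps)$, a block basic sequence of $(e_n)$; and for any normalized block basic $(x_n)$ in $X_\mrm{rFh}$ the triangle inequality gives the automatic upper estimate $\|\sum_na_nx_n\|_\mrm{rFh}\le\sum_n|a_n|$. Hence it suffices to prove: \emph{every normalized block basic sequence $(x_n)$ of $(e_n)$ admits a normalized block sequence $(v_r)$ of $(x_n)$ with $\|\sum_ra_rv_r\|_\mrm{rFh}\ge c\sum_r|a_r|$ for a universal $c>0$} (then $(v_r)\sim(\text{unit vectors of }\ell_1)$). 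Fix such an $(x_n)$ and let $[\al_n,\be_n]$ be the set of indices $m$ with $I_m\cap\mrm{supp}(x_n)\ne\0$. Since the supports are successive and $\be_n\to\infty$, after passing to a subsequence the interval-blocks $[\al_n,\be_n]$ are pairwise disjoint; thinning further we may assume $2^{\al_n-1}$ grows so fast that every floor-rounding $\lfloor|I|/l\rfloor$ occurring below is within a factor $1-o(1)$ of $|I|/l$ — this is precisely what ($\star$) quantifies. Throughout we use $\|y\clrest I\|_1\le\|y\|_\mrm{rFh}$ for any interval $I$ (one-element branch), hence $\|x_n\clrest I_m\|_1\le1$; combined with disjointness of the interval-blocks this yields the \emph{key bound}: for any interval $I$ and any finite block $B$, $\|(\sum_{n\in B}x_n)\clrest I\|_1\le1$, since at most one $x_n$ meets $I$.

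The first ingredient is that $X_\mrm{rFh}$ contains no copy of $c_0$, equivalently (by the characterization recalled in Section~\ref{towards}) $\mrm{EXH}(\mrm{rFh})=\mrm{FIN}(\mrm{rFh})$. Sketch: if $\|x\|_\mrm{rFh}<\infty$ then $\|x\clrest I_n\|_1\to0$, for otherwise running a branch through infinitely many intervals on which $\|x\clrest I_n\|_1\ge\de$ and applying ($\star$) gives $\|x\|_\mrm{rFh}\ge\de\sum_k\tfrac1{k+1}=\infty$; and then, if $\|P_{[N,\infty)}x\|_\mrm{rFh}\not\to0$, one extracts successive admissible sets $F_j\subseteq[N_j,\infty)$ with $N_j\to\infty$ very fast and $\sum_{i\in F_j}|x(i)|\ge\de$, and concatenates suitably truncated copies of them into a single admissible set witnessing $\|x\|_\mrm{rFh}=\infty$: the truncation costs only a bounded factor because, $\|x\clrest I\|_1$ being uniformly small on the relevant intervals, each $F_j$ meets very many intervals and the mass lying beyond the accumulated offset survives with factor $\ge\tfrac12$ — divergence of $\sum_k\tfrac1k$ is again the point. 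Since $(x_n)$ is $1$-unconditional, it follows that $\|\sum_{n=1}^Nx_n\|_\mrm{rFh}$ cannot stay bounded (else $(x_n)$ would be equivalent to the $c_0$-basis), so the block-norms $N_B:=\|\sum_{n\in B}x_n\|_\mrm{rFh}$ are unbounded over finite blocks $B$.

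Now build the blocks recursively. Given $B_1,\dots,B_{r-1}$, let $Q_r$ be the number of distinct intervals met by $\bigcup_{r'<r}\bigcup_{n\in B_{r'}}\mrm{supp}(x_n)$, use unboundedness to choose a block $B_r$ past $B_{r-1}$ with $N_r:=N_{B_r}\ge4(Q_r+1)$, and put $v_r:=N_r^{-1}\sum_{n\in B_r}x_n$, a normalized block sequence of $(x_n)$. For the lower estimate, pick for each $r$ a near-optimal admissible set $F_r\in\mrm{rFh}$ inside $\bigcup_{n\in B_r}\mrm{supp}(x_n)$ with $\sum_{i\in F_r}|(\sum_{n\in B_r}x_n)(i)|\ge\tfrac34N_r$, and form $A:=\bigsqcup_rA_r$, where $A_r$ keeps exactly the intervals that $F_r$ meets but truncates the $p$-th one (in increasing order) to its $\lfloor|I|/(Q_r+p)\rfloor$ largest-weight elements. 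Because $A$ uses precisely the intervals of $F_1,\dots,F_{r-1}$ before it reaches $B_r$, the $p$-th interval of $F_r$ sits at position $Q_r+p$ inside $A$, so $A\in\mrm{rFh}$. By the key bound the mass of $F_r$ on its first $Q_r+1$ intervals is $\le Q_r+1\le\tfrac14N_r$, while on each interval at position $p\ge Q_r+2$ the truncation retains a factor $\frac{p}{Q_r+p}\ge\tfrac12$, up to the negligible floor error; hence $\sum_{i\in A_r}|(\sum_{n\in B_r}x_n)(i)|\ge\tfrac18N_r$ for $N_r$ large. Therefore $\|\sum_ra_rv_r\|_\mrm{rFh}\ge\sum_{i\in A}\big|(\sum_ra_rv_r)(i)\big|=\sum_r\frac{|a_r|}{N_r}\sum_{i\in A_r}\big|(\sum_{n\in B_r}x_n)(i)\big|\ge\tfrac18\sum_r|a_r|$, which is the desired lower $\ell_1$-estimate.

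The main obstacle is the no-$c_0$ step together with the block-norm unboundedness it yields: the truncate-and-concatenate argument there, like the concluding estimate, is delicate because one must at once keep the assembled set in $\mrm{rFh}$, retain a uniform fraction of its mass, and keep the accumulated offsets small against the (huge) interval sizes — all of which rests on the divergence of $\sum_k\tfrac1k$ and on the quantitative inequality ($\star$). The remaining steps are routine bookkeeping once the interval-blocks have been made disjoint and the intervals made large enough.
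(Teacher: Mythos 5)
Your proof is correct, and it is organized quite differently from the paper's. The paper proves a perturbed triangle inequality (for every $s$ there is a normalized $y\in[(x_m)]$, supported beyond $\bigcup_{n\le s}I_n$, with $\|z+\beta y\|_\mrm{rFh}\ge\|z\|_\mrm{rFh}+|\beta|/2$ for all $z$ supported on the first $s$ intervals) and iterates it; the claim itself is proved by a dichotomy on whether $\max_i\|x_m\|_{n^m_i,1}\to0$, with the complementary case handled by forming long averages whose norms diverge like the harmonic series and which therefore have small single‑interval contributions. You instead first establish that $X_\mrm{rFh}$ contains no copy of $c_0$ (a fact the paper only records in the Remark afterwards), deduce that block norms $N_B=\|\sum_{n\in B}x_n\|_\mrm{rFh}$ are unbounded along every tail, choose blocks with $N_r\ge4(Q_r+1)$, and build a single admissible witness $A$ by concatenating position‑shifted truncations of near‑optimal sets $F_r$. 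The quantitative engine is the same in both arguments --- the inequality ($\star$), the divergence of $\sum_k 1/k$, and the fact that shifting an admissible set by a bounded offset $Q$ retains a definite fraction of the mass sitting beyond position $\sim Q$ --- but your trivial bound $\|(\sum_{n\in B_r}x_n)\clrest I\|_1\le1$ per interval (valid once the interval‑blocks are disjoint) replaces the paper's Case 1/Case 2 analysis entirely, at the cost of the worse constant $\tfrac18$ versus $\tfrac12$ and of needing the no‑$c_0$ step up front. That step is the only part of your write‑up that is a sketch rather than a proof; it does hold and is fillable exactly along the lines you indicate (choose $N_j$ so large that $\sup_{n\ge N_j}\|x\clrest I_n\|_1\le\delta/(8(Q_j+1))$, so that the first $2Q_j+2$ positions of $F_j$ carry at most half its mass and the rest survives the shift with factor $\ge\tfrac12$), but since it carries the same real content as your concluding estimate it should be written out in full --- indeed the two concatenation computations could profitably be unified into one lemma. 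Two small points worth making explicit: the passage to a subsequence with pairwise disjoint interval‑blocks should be done greedily (each $I_m$ meets only finitely many supports, so $\al_n\to\infty$), and in the final estimate one should note that for a finite combination $\sum_{r\le R}a_rv_r$ the relevant witness $\bigcup_{r\le R}A_r$ is a finite member of $\mrm{rFh}$, which is what the position bound $q\le Q_r+p$ delivers.
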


\begin{proof} 
Applying the Selection Principle (now its other variant, see e.g. \cite[Theorem 4.26]{bible}), it is enough to find copies of $\ell_1$ in subspaces of the form $[(x_m)]$ where $(x_m)$ is a normalized block basic sequence. We can assume that the sets $D_m=\{n:\mrm{supp}(x_m)\cap I_n\ne\0\}$ are consecutive and fix 
\[ \big\{n^m_1<n^m_2<\dots<n^m_{l_m}\big\}\subseteq D_m\,\text{ such that }\,1=\|x_m\|_{\mrm{rFh}}=\sum_{k=1}^{l_m}\|x_m\|_{n^m_k,k}.\] 

The proof is based on the following technical statement: 
\begin{nnclaim}
Let $s\in \mathbb{N}$. Then there is a $y \in [(x_m)]$ such that the following holds:
\begin{itemize}\setlength\itemsep{0.1cm}
\item[(a)] $\mrm{supp}(y)\subseteq \NN\setminus \bigcup_{n=1}^sI_n$ is finite and $\|y\|_\mrm{rFh} = 1$. 
\item[(b)] If $z\in c_{00}$, $\mrm{supp}(z) \subseteq \bigcup_{n=1}^s I_n$, and $\beta\in\mbb{R}$, then 
\[\| z + \beta y\|_{\mrm{rFh}} \geq \| z
	\|_{\mrm{rFh}} +  |\beta|/2.\]
\end{itemize}
\end{nnclaim}
	
Let us first show that this implies the theorem. We can construct inductively a normalized block basic sequence $y_k\in [(x_m)]$ the following way: Let $y_1 = x_1$ and in general, let $y_{k+1}$ be $y$ from the Claim above to an $s$ satisfying $\mrm{supp}(y_k)\subseteq\bigcup_{n=1}^s I_n$. To finish the argument we show that $(y_k)$ is equivalent to the usual basis of $\ell_1$. If $K\in\NN$ and $\be\in\mbb{R}^K$ then 
\begin{align*}
\bigg\|\sum_{k=1}^K \be(k) y_k\bigg\|_\mrm{rFh} &\geq \bigg\|\sum_{k=1}^{K-1}\be(k) y_k\bigg\|_\mrm{rFh} + \frac{|\beta(K)|}{2} \\
&\geq \bigg\|\sum_{k=1}^{K-2}\be(k) y_k\bigg\|_\mrm{rFh} + \frac{|\be(K-1)|}{2} + \frac{|\beta(K)|}{2} \geq \dots\\ 
&\geq	|\beta(1)| + \frac{|\beta(2)|}{2} + \dots +  + \frac{|\beta(K)|}{2} \geq \frac{1}{2}\sum_{k=1}^K|\beta(k)|.
\end{align*} 

Regarding the Claim, we distinguish two cases.

\smallskip
\textbf{Case 1.} $\max\{\|x_m\|_{n^m_i,1}:i=1,\dots,l_m\}\xrightarrow{m\to\infty} 0$. 

We show that $y=x_m$ is as required if $m$ is large enough. Take an arbitrary $m\in\NN$ such that $s\leq \min(D_m)-4$. Then $s\leq n^m_1-4$ and hence $s+i+1\leq n^m_1+(i-1)-2\leq n^m_i-2$ for every $i\in [1,l_m]$. It follows that $(s+i+1)^2\leq 2^{n^m_i-1}$ for every such $i$. The point is that, assuming $s\leq \min(D_m)-4$ and $1\leq i\leq l_m$,
\begin{itemize}\setlength\itemsep{0.1cm}
\item[(i)] $\|x_m\|_{n^m_i,s+i}$ is defined, and 
\item[(ii)] ($\star$) applies with $x=x_m$, $l=i$, $l'=s+i$, and $n=n^m_i$. 
\end{itemize}

By the definition of $\|\bullet\|_\mrm{rFh}$, we know that
\[\| z + \beta x_m \|_{\mrm{rFh}} \geq \| z \|_{\mrm{rFh}} + |\beta| \sum_{i=1}^{l_m}\|x_m\|_{n^m_i,s + i}.\]
Therefore, given any $r\in [1,l_m)$,
\begin{align*}\tag{1} \| z + \beta x_m \|_{\mrm{rFh}} - \| z \|_{\mrm{rFh}} - \| \beta x_m \|_{\mrm{rFh}} 
&\geq |\beta|\bigg(\sum_{i=1}^{l_m} \|x_m\|_{n^m_i,s+i}- \sum_{i=1}^{l_m} \|x_m\|_{n^m_i,i}\bigg)\\ 
&\geq |\beta|\bigg(\sum_{i=r+1}^{l_m} \|x_m\|_{n^m_i,s+i}- \sum_{i=1}^{l_m} \|x_m\|_{n^m_i,i}\bigg).\end{align*}

Now, we need to specify $m$ a little further: Fix first $r$ then $m$ from $\NN$ such that
\begin{itemize}\setlength\itemsep{0.1cm}
\item[($r$)] $r/(s + r + 1) \geq 3/4$;
\item[($m$)] $s\leq \min(D_m)-4$ and $\|x_m\|_{n^m_i,1}\leq 1/4r$ for every $i\in [1,l_m]$.
\end{itemize}
Applying ($\star$) as in (ii) above, for every $i\in (r,l_m]$ we have  
\[ \|x_m\|_{n^m_i,s + i} \geq \frac{i}{s+i+1}\|x_m\|_{n^m_i,i} \geq  \frac{r}{s+r+1}\|x_m\|_{n^m_i,i} \geq \frac{3}{4}\|x_m\|_{n^m_i,i},\] 
and hence the last difference of sums in (1) can be estimated as follows:
\begin{align*}\tag{2} 
\sum_{i=r+1}^{l_m} \|x_m\|_{n^m_i,s+i}- \sum_{i=1}^{l_m} \|x_m\|_{n^m_i,i}&\geq -\frac{1}{4}\sum_{i=r+1}^{l_m} \|x_m\|_{n^m_i,i} - \sum_{i=1}^{r} \|x_m\|_{n^m_i,i}\\
&\geq -\frac{1}{4}\|x_m\|_\mrm{rFh} - r \|x_m\|_{n^m_i,1}\geq -\frac{1}{4} - r\frac{1}{4r} = - \frac{1}{2}.
\end{align*}

Combining (1) and (2), $\| z + \beta x_m \|_{\mrm{rFh}} - \| z \|_{\mrm{rFh}} - |\beta| \geq -|\beta|/2$, hence $y=x_m$ is as desired.

\smallskip
\textbf{Case 2.} There are a $\de>0$, an $S\in[\NN]^\infty$, and for every $m\in S$ an $i_m\in [1,l_m]$ such that $\|x_m\|_{n^m_{i_m},1}\geq \de$. 

Fix $J\in\NN$ and $E=\{m_1<m_2<\dots<m_J\}\subseteq S\setminus \{1,2,3\}$. Then, with $n_j=n^{m_j}_{i_{m_j}}$, we know that $1\leq m_1-3\leq n^{m_1}_1-3\leq n_1-3$, it follows that $j+1\leq n_1+(j-1)-2\leq n_j-2$, and hence $(j+1)^2\leq 2^{n_j-1}$ and we can apply ($\star$) with $l=1$, $l'=j$, and $n=n_j$:
\[ \bigg\|\sum_{m\in E} x_m\bigg\|_\mrm{rFh} \geq \sum_{j=1}^J\|x_{m_j}\|_{n_j,j} \geq \sum_{j=1}^J\frac{\|x_{m_j}\|_{n_j,1}}{j+1}\geq \sum_{j=1}^J\frac{\de}{j+1}. \]
Therefore, we can pick finite subsets $E_1<E_2<\dots$ of $S$ such that $\|\sum_{m\in E_k}x_m\|_\mrm{rFh}\geq k$ for every $k$ and define \[ \wt{x}_k = \frac{\sum_{m\in E_k}x_m}{\|\sum_{m\in E_k}x_m\|_\mrm{rFh}}\in [(x_m)],\]
a normalized block basic sequence. Instead of working with $(x_m)$, we switch to $(\wt{x}_k)$ and define everything as above, $\wt{D}_k=\{n:\mrm{supp}(\wt{x}_k)\cap I_n\ne\0\}$, $\{\wt{n}^k_i:i=1,\dots,\wt{l}_k\}\subseteq\wt{D}_k$ such that $1=\|\wt{x}_k\|_\mrm{rFh}=\sum_{i=1}^{\wt{l}_k}\|\wt{x}_k\|_{\wt{n}^k_i,i}$, etc. Then 
\[ \max\big\{\|\wt{x}_k\|_{\wt{n}^k_i,1}:i\in[1,\wt{l}_k]\big\}\leq \frac{\max\big\{\|x_m\|_{\wt{n}^k_i,1}:m\in E_k,i\in [1,\wt{l}_k]\big\}}{k}\leq \frac{1}{k},\] therefore, we can apply Case 1 to find the desired $y\in [(\wt{x}_k)]\subseteq [(x_m)]$.
\end{proof}

\begin{rem} In fact, we obtained an even simpler example of an $\ell_1$-saturated space without the Schur property: Consider $X=[(x_n)] \subseteq X_\mrm{rFh}$ where $x_n = \chi_{I_n}/|I_n|$, then $(x_n)$ witnesses the failure of the Schur property, and, by the last theorem, $X$ is $\ell_1$-saturated. Considering $X\subseteq\mbb{R}^\NN$ along the $1$-unconditional basis $(x_n)$, the norm is of the following very simple form:
\[ \|a\|= \sup\bigg\{\sum_{k=1}^\infty\frac{|a(n_k)|}{k}:(n_k)\in \NN^\NN\,\text{ is strictly increasing}\bigg\}. \]
	In other words, $X$ is the completion of $c_{00}$ w.r.t. $\|\bullet\|$; or, alternatively,  $\|\bullet\|$ is an extended norm on $\mbb{R}^\NN$ and $a\in X$ iff $\|a\|<\infty$ iff $\|P_{[n,\infty)}(a)\|\to 0$, because $X_{\mrm{rFh}}$ does not
	contain copies of $c_0$, hence nor does $X$, therefore its basis is boundedly complete.
\end{rem}

\subsection{Combinatorial spaces with prerequisite subspaces} Example \ref{exa-antichains} and Example \ref{exa-chains} witness that combinatorial spaces can contain any Banach space with an unconditional basis. However, it is more or less impossible to track
down and ``really'' see e.g. a copy of $\ell_2$ in these examples. We will show that for every Banach space $Y$ with unconditional basis, there is a natural family $\mc{F}$ such that a complemented block basic sequence in $X_\mc{F}$ is equivalent to the basis of $Y$. The point is that we may encode a given ``geometric'' structure in the definition of $\mc{F}$. 

Let $I_n=[2^{n-1},2^n)$, $\Omega=\bigcup_{n=3}^\infty I_n$, and fix a Banach space $Y$ with normalized $1$-unconditional basis $(b_n)_{n\geq 3}$. We consider $Y\subseteq\mbb{R}^{\NN\setminus\{1,2\}}$ along this basis (that is, $y=\sum_{n=3}^\infty y(n)b_n$), also, we consider $Y^*\subseteq\mbb{R}^{\NN\setminus\{1,2\}}$ along $(b^*_n)$. As $(b_n)$ is $1$-unconditional, if $\sigma\in Y^*$ then $\|\sigma\|_{Y^*}\leq \sum_{n=3}^\infty|\sigma(n)|$. Define
\[ \mc{F}(Y)=\bigg\{F\in [\Omega]^{<\infty}:\bigg(\frac{|F\cap I_n|}{|I_n|}\bigg)\in B(Y^*)\bigg\}\]
and notice that it is a hereditary cover of $\Omega$.

\begin{thm}\label{prere}
With $Y$ and $\mathcal{F}=\mc{F}(Y)$ as above, the sequence $x_n = \chi_{I_n}/|I_n|$ is a complemented normalized block basic sequence in $X_\mc{F}$ that is equivalent to $(e_n)$.
\end{thm}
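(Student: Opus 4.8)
The plan is to establish two things: that $(x_n)$ with $x_n=\chi_{I_n}/|I_n|$ is equivalent to the canonical basis $(e_n)$ of $Y$ inside $X_{\mc{F}}$, and that the closed span $[(x_n)]$ is complemented in $X_{\mc{F}}$ via a natural projection. For the first point, the key observation is that the norm of a block vector $\sum_n a(n)x_n$ in $X_{\mc{F}}$ is computed by testing against $F\in\mc{F}$, and since $x_n$ is constant on $I_n$ with value $1/|I_n|$, we have $\sum_{k\in F}|(\sum_n a(n)x_n)(k)| = \sum_n |a(n)|\,|F\cap I_n|/|I_n|$. Writing $t_n = |F\cap I_n|/|I_n|$, the definition of $\mc{F}(Y)$ says exactly that $(t_n)\in B(Y^*)$, so this sum is $\la (t_n), (|a(n)|)\ra \le \|(|a(n)|)\|_Y = \|\sum_n a(n)e_n\|_Y$ by $1$-unconditionality of $(b_n)$. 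Hence $\|\sum_n a(n)x_n\|_{\mc{F}} \le \|\sum_n a(n)e_n\|_Y$. For the reverse inequality one needs to realize (approximately) the $Y$-norm: given $a\in c_{00}$ and $\sigma\in B(Y^*)$ nearly norming $(|a(n)|)$, one would like a single $F\in\mc{F}$ with $|F\cap I_n|/|I_n|\approx|\sigma(n)|$. Since $\sigma$ has finite support in the relevant estimate and $|I_n|=2^{n-1}\to\infty$, one can choose $F\cap I_n$ to be any subset of size $\lfloor |\sigma(n)|\,|I_n|\rfloor$, and the rounding error $1/|I_n|$ is summably small (or can be absorbed by a limiting/density argument), giving $\|\sum_n a(n)x_n\|_{\mc{F}} \ge (1-\eps)\|\sum_n a(n)e_n\|_Y$ for $a$ in a fixed finite-dimensional coordinate block. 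A cleaner route is to note $\sup$ over finite $F$ of $\sum_n|a(n)| \lfloor |\sigma(n)||I_n|\rfloor/|I_n|$ equals $\sum_n |a(n)||\sigma(n)|$ once we also allow $F$ supported on arbitrarily late $I_n$'s, using that $B(Y^*)$ is closed under shrinking coordinates (as $(b_n)$ is $1$-unconditional, so any $(t_n)\le(|\sigma(n)|)$ coordinatewise lies in $B(Y^*)$); thus the $\sup$ is attained in the limit and the two norms coincide exactly up to the rounding, which vanishes. So $(x_n)$ is $1$-equivalent (or $(1+\eps)$-equivalent, then genuinely isometric by the limiting argument) to $(e_n)$.

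For the complementation, the natural candidate projection is the averaging operator $P\colon X_{\mc{F}}\to [(x_n)]$ defined by $P(y) = \sum_n \big(\sum_{k\in I_n} y(k)\big)\, x_n = \sum_n \big(\sum_{k\in I_n} y(k)\big)\,\chi_{I_n}/|I_n|$, i.e. replace $y$ on each block $I_n$ by its average. One checks $P$ is the identity on $[(x_n)]$ and $P^2=P$, so it remains to bound $\|P\|$. Here I would again compute against an arbitrary $F\in\mc{F}$: $\sum_{k\in F}|P(y)(k)| = \sum_n |F\cap I_n|/|I_n| \cdot |\sum_{k\in I_n} y(k)| \le \sum_n (|F\cap I_n|/|I_n|) \sum_{k\in I_n}|y(k)|$. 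Setting $t_n = |F\cap I_n|/|I_n|$, we have $(t_n)\in B(Y^*)$; I want to compare $\sum_n t_n \sum_{k\in I_n}|y(k)|$ with $\|y\|_{\mc{F}} = \sup_{G\in\mc{F}} \sum_{k\in G}|y(k)|$. The natural move is: for each $n$ pick $G_n\subseteq I_n$ with $|G_n| = \lfloor t_n|I_n|\rfloor \approx |F\cap I_n|$ capturing the $t_n$-fraction of the largest entries of $|y|$ on $I_n$; then $(G_n)_n$ glued together, call it $G = \bigcup_n G_n$, has $|G\cap I_n|/|I_n| \le t_n$, so $(|G\cap I_n|/|I_n|)\le (t_n)$ coordinatewise and hence lies in $B(Y^*)$, i.e. $G\in\mc{F}$. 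By the choice of $G_n$ as the top entries, $\sum_{k\in G_n}|y(k)| \ge t_n \sum_{k\in I_n}|y(k)|$ (up to the rounding $1/|I_n|$ times $\max$-entry, which is controlled). Summing, $\sum_{k\in G}|y(k)| \ge \sum_n t_n \sum_{k\in I_n}|y(k)|$ modulo a small error, and since $G\in\mc{F}$ this is $\le \|y\|_{\mc{F}}$. Therefore $\|P(y)\|_{\mc{F}} \le \|y\|_{\mc{F}}$, so $P$ is a norm-one projection (or norm-$(1+\eps)$, then one argues the error term is genuinely zero in the $\sup$). Finally $P$ maps into $X_{\mc{F}}$ (not just $\mrm{FIN}(\mc{F})$) because $P(y)$ has the same tail-vanishing behavior as $y$: $\|P_{[N,\infty)}P(y)\|_{\mc{F}}\le\|P_{[N',\infty)}(y)\|_{\mc{F}}$ for $N'$ the start of the block containing $N$, which tends to $0$.

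The main obstacle I anticipate is making the rounding arguments fully rigorous in both directions at once: the subsets $F\cap I_n$ have integer cardinality while the target fractions $t_n$ or $|\sigma(n)|$ are real, so $|F\cap I_n|/|I_n|$ only lands in a $1/|I_n|$-net of $[0,1]$. The saving grace is precisely that $|I_n| = 2^{n-1}$ grows geometrically, so all these errors are summable and in fact, by a limiting argument (shifting the relevant blocks arbitrarily far out, where the mesh $1/|I_n|$ is arbitrarily fine, combined with the $1$-unconditionality of $Y$ which lets us freely shrink the test functional's coordinates), one gets exact equality rather than just a $(1+\eps)$-bound. A secondary subtlety is verifying that the glued test set $G = \bigcup_n G_n$ genuinely belongs to $\mc{F}$: this is where I use that $\mc{F}(Y)$ is defined by membership of the whole profile $(|G\cap I_n|/|I_n|)_n$ in $B(Y^*)$ and that $B(Y^*)$ is hereditary under coordinatewise domination — a direct consequence of $(b_n)$ being $1$-unconditional — so dominating by $(t_n)\in B(Y^*)$ suffices. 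Once these bookkeeping points are in place, the computations are routine.
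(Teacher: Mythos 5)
Your argument follows the paper's proof essentially step for step: the same computation identifying $\|\sum_n a(n)x_n\|_\mc{F}$ with a supremum of $\sum_n|a(n)|\,|F\cap I_n|/|I_n|$ over profiles $(|F\cap I_n|/|I_n|)\in B(Y^*)$, the same rounding construction for the lower bound, and the same averaging projection with the same ``top entries'' test set for its boundedness. Two corrections are in order. First, your claim that the equivalence is exact (isometric, via a ``limiting argument'') is false: the profiles $|F\cap I_n|/|I_n|$ only range over a $1/|I_n|$-grid, and since the supports $I_n$ are fixed you cannot shift the relevant blocks farther out; for instance with $Y=\ell_2$ and $a=(e_3+e_4)/\sqrt{2}$ the best admissible grid profile yields a value strictly below $\|a\|_{\ell_2}=1$. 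What the summability $\sum_{n\geq 3}1/|I_n|=1/2$ actually buys --- and what the paper proves --- is the two-sided estimate $\tfrac12\|a\|_Y\leq\|\sum_n a(n)x_n\|_\mc{F}\leq\|a\|_Y$, which is all the theorem asserts. Second, in the complementation argument there is no rounding issue at all: with $t_n=|F\cap I_n|/|I_n|$ the quantity $t_n|I_n|=|F\cap I_n|$ is already an integer, so taking $G_n$ to be the $|F\cap I_n|$ largest entries of $|y|$ on $I_n$ gives $\sum_{k\in G_n}|y(k)|\geq t_n\sum_{k\in I_n}|y(k)|$ exactly, and $G=\bigcup_n G_n\in\mc{F}$ because its profile equals that of $F$; the projection is genuinely norm one.
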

\begin{proof}
If $y\in c_{00}(\Omega)$, then
\begin{align*} \bigg\| \sum_{n=3}^\infty y(n)x_n \bigg\|_\mc{F}&= \sup\bigg\{\sum_{n=3}^\infty |F\cap I_n|\frac{|y(n)|}{|I_n|}:F\in\mc{F}\bigg\}\\
&= \sup\bigg\{\bigg|\bigg\la\bigg(\eps_n\frac{|F\cap I_n|}{|I_n|}\bigg),y\bigg\ra\bigg|:\eps_n=\pm 1\,\text{ and }\,F\in \mc{F}\bigg\}\\
&\leq\sup\big\{|\la\sigma,y\ra|:\sigma\in B(Y^*)\big\}=\|y\|_Y.
\end{align*}

Conversely, given $\sigma\in B(Y^*)$, for each $n\geq 3$ we can fix an $F_n\subseteq I_n$ such that 
\[ \frac{|F_n|}{|I_n|}\leq |\sigma(n)|<\frac{|F_n|+1}{|I_n|}.\]
Then $A_\sigma=\bigcup_{n=3}^\infty F_n\in \overline{\mc{F}}$ and
\[ \bigg\|\sigma-\bigg(\mrm{sgn}(\sigma(n))\frac{|A_\sigma\cap I_n|}{|I_n|}\bigg)\bigg\|_{Y^*} \leq \sum_{n=3}^\infty\bigg|\sigma(n)-\mrm{sgn}(\sigma(n))\frac{|F_n|}{|I_n|}\bigg|<\sum_{n=3}^\infty \frac{1}{|I_n|}=\frac{1}{2}.\]
Therefore, if $y\in c_{00}(\Omega)$, then 
\begin{align*} 
\|y\|_Y&=\sup\big\{|\la\sigma,y\ra|:\sigma\in B(Y^*)\big\}\\
&\leq \sup\bigg\{\bigg|\bigg\la\bigg(\eps_n\frac{|F\cap I_n|}{|I_n|}\bigg),y\bigg\ra\bigg|+\frac{\|y\|_Y}{2}:\eps_n=\pm 1\,\text{ and }\,F\in \mc{F}\bigg\}\\
	&=\bigg\|\sum_{n=3}^\infty y(n)x_n\bigg\|_\mc{F}+\frac{\|y\|_Y}{2},
\end{align*}  
and hence $\|y\|_Y\leq 2\|\sum_{n=3}^\infty y(n)x_n\|_\mc{F}$.

\smallskip
To show that $[(x_n)]$ is complemented in $X_\mc{F}$, define $T:\mbb{R}^\NN \rightarrow \mbb{R}^\NN$ as follows: For $x\in \mbb{R}^\NN$ and $k \in I_n$ let \[ T(x)(k) = \sum_{i \in I_n} \frac{x(i)}{2^{n-1}}.\] 
In other words, $T(x)$ on $I_n$ replaces the values of $x$ with its arithmetic mean over $I_n$. Clearly, $T$ is linear, $T\clrest [(x_n)]$ is the identity, and $T^2=T$. It remains to show that $T[X_\mc{F}]\subseteq X_\mc{F}$ (i.e. $T[X_\mc{F}]\subseteq [(x_n)]$) and that $T$ is continuous.

Given $x \in X_{\mathcal{F}}$ and $F \in \mathcal{F}$, let $E\subseteq\NN$ be such that 
\begin{itemize}\setlength\itemsep{0.1cm}
\item[(a)] $|E\cap I_n|= |F\cap I_n|$ for every $n$ (hence $E\in\mc{F}$), and 
\item[(b)] $\sum_{k\in E}|x(k)|$ is maximal with respect to (a). 
\end{itemize} 
It follows that $\sum_{k\in F}|T(x)(k)|\leq \sum_{k\in E}|x(k)|\leq \|x\|_\mc{F}$ holds for every $F\in\mc{F}$, hence $\|T(x)\|_\mc{F}\leq \|x\|_\mc{F}$. Applying this inequality, if $x\in X_\mc{F}$ and $n\geq 2$ then
\[ \|P_{[2^n,\infty)}(T(x))\|_\mc{F}=\|T(P_{[2^n,\infty)}(x))\|_\mc{F}\leq \|P_{[2^n,\infty)}(x)\|_\mc{F},\] 
therefore, $T(x)\in X_\mc{F}$, and so $T:X_\mc{F}\to X_\mc{F}$ is bounded.
\end{proof}

\begin{exa}\label{exa-l2}
Let $Y=\ell_2$ and let $\mc{F}$ be the associated family above. We show that $X_{\mc{F}}$ does not contain $c_0$ and hence this example witnesses that not containing $c_0$ does not imply $\ell_1$-saturation in the realm of combinatorial spaces. Also, notice that $\mc{F}$ fails to satisfy ($U_S$) because of the sequence $(I_n)$. 

Let $(x_n)$ be an $\mc{F}$-supported normalized block basic sequence in $X_\mc{F}$, $\mrm{supp}(x_n)=F_n\in\mc{F}$; by thinning our sequence, we can assume that the sets $D_n=\{k\geq 3:F_n\cap I_k\ne\0\}$ are consecutive and 
\[\tag{$\ast$} \sum_{n=1}^\infty\frac{16}{2^{\min(D_n)}}<\frac{1}{4}.\]

For $k\in D_n$, let $F_{n,k}=F_n\cap I_k$ and pick an $E_{n,k}\subseteq F_{n,k}$ such that 
\[\tag{$\ast\ast$} |E_{n,k}|=\bigg\lceil\frac{|F_{n,k}|}{2n}\bigg\rceil\,\text{ and }\,\|P_{E_{n,k}}(x_n)\|_\mc{F}\geq\frac{\|P_{F_{n,k}}(x_n)\|_\mc{F}}{2n}.\] 
	We show that \[ A=\bigcup_{n=1}^\infty\bigcup_{k\in D_n}E_{n,k}\in\overline{\mc{F}}\] and that $\sum_{n=1}^\infty\|P_A(x_n)\|_\mc{F}=\infty$ (hence ($\neg c_0$) holds).

\smallskip
$A\in\overline{\mc{F}}$: 
\begin{align*} 
\sum_{k=3}^\infty\frac{|A\cap I_k|^2}{|I_k|^2}&=\sum_{n=1}^\infty\sum_{k\in D_n}\frac{|E_{n,k}|^2}{|I_k|^2}\leq \sum_{n=1}^\infty\sum_{k\in D_n}\bigg(\frac{|F_{n,k}|}{2n}+1\bigg)^2\frac{1}{|I_k|^2} \\
&= \sum_{n=1}^\infty\bigg(\frac{1}{(2n)^2}\sum_{k\in D_n}\frac{|F_{n,k}|^2}{|I_k|^2} +\frac{1}{n}\sum_{k\in D_n}\frac{|F_{n,k}|}{|I_k|^2}+\sum_{k\in D_n}\frac{1}{|I_k|^2}\bigg)
\end{align*}
where we know the following:
\[\tag{1} \sum_{k\in D_n}\frac{|F_{n,k}|^2}{|I_k|^2}=\sum_{k=3}^\infty\frac{|F_n\cap I_k|^2}{|I_k|^2}\leq 1\,\text{ because }\,F_n\in\mc{F}.\]
\[\tag{2} \frac{1}{n}\sum_{k\in D_n}\frac{|F_{n,k}|}{|I_k|^2}\leq  \frac{1}{n}\sum_{k\in D_n}\frac{1}{2^{k-1}}\leq \frac{1}{n}\sum_{k=\min(D_n)}^\infty\frac{1}{2^{k-1}}=\frac{4}{n\cdot 2^{\min(D_n)}}< \frac{16}{2^{\min(D_n)}}. \]
	\[\tag{3} \sum_{k\in D_n}\frac{1}{|I_k|^2}\leq \sum_{k=\min(D_n)}^\infty\frac{1}{2^{2k-2}}=\frac{16}{3\cdot 2^{2\min(D_n)}}<\frac{16}{2^{\min(D_n)}}.\]
Now, substituting (1), (2), and (3) in the estimation above and applying ($\ast$):
\[
\sum_{k=3}^\infty\frac{|A\cap I_k|^2}{|I_k|^2}< \sum_{n=1}^\infty\bigg(\frac{1}{(2n)^2}+\frac{16}{2^{\min(D_n)}}+\frac{16}{2^{\min(D_n)}}\bigg)<\frac{\pi^2}{24}+\frac{1}{4}+\frac{1}{4}<1.\]

The second statement follows easily from ($\ast\ast$):
\begin{align*}
\sum_{n=1}^\infty\|P_A(x_n)\|_\mc{F}&=\sum_{n=1}^\infty\sum_{k\in D_n}\|P_{E_{n,k}}(x_n)\|_\mc{F}\geq\sum_{n=1}^\infty\sum_{k\in D_n}\frac{\|P_{F_{n,k}}(x_n)\|_\mc{F}}{2n}\\
&=\sum_{n=1}^\infty\frac{\|P_{F_n}(x_n)\|_\mc{F}}{2n}=\sum_{n=1}^\infty\frac{1}{2n}=\infty.
\end{align*}
\end{exa}

\subsection{Universal families and spaces.} We will finish our journey to the Zoo with a Fra\"ise type construction which will provide us with another classical example of a separable Banach space, the Pe\l czy\'nski's universal space, of the form $X_\mc{F}$.

\smallskip
We say that an $\mc{F}\in\mrm{FHC}$ is \emph{universal} if every $\mc{G}\in\mrm{FHC}$ is isomorphic to a restriction of $\mc{F}$, that is, there is a one-to-one $e:\NN\to\NN$ such that $G\in\mc{G}$ iff $e[G]\in\mc{F}$. Clearly, if $\mc{F}$ is universal, then $X_\mc{F}$ contains complemented copies of all combinatorial spaces (in other words, $X_\mc{F}$ is universal for this class of spaces). To show that there are universal families, we introduce the following notion: 

\begin{df} 
An $\mc{F}\in\mrm{FHC}$ satisfies the \emph{extension property}, EP if the following holds: If $\mc{H}$ is a hereditary cover of a finite set $E$, $E_0\subseteq E$, and $\al_0:E_0\to \NN$ is an isomorphism between $\mc{H}\clrest E_0$ and $\mc{F}\clrest \al_0[E_0]$, then there is an isomorphism $\al:E\to \NN$ between $\mc{H}$ and $\mc{F}\clrest\al[E]$ extending $\al_0$.
\end{df}

\begin{prop}\label{fraise} The following holds:
\begin{itemize}\setlength\itemsep{0.1cm}
\item[(a)] There is a $\mc{P}$ satisfying EP.
\item[(b)] If $\mc{P}$ satisfies EP then it is universal.
\item[(c)] If $\mc{P}$ and $\mc{G}$ satisfy EP, then they are isomorphic.
\end{itemize}
\end{prop}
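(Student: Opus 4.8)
The plan is a Fra\"ise-type construction, where the ``finite pieces'' are hereditary covers of finite sets and an isomorphism is a bijection of underlying sets transporting one family to the other. The one combinatorial ingredient is a free amalgamation lemma, which I would phrase as follows: if $\mc{P}_0$ is a hereditary cover of a finite set $D$, $\mc{H}$ is a hereditary cover of a finite set $E$, $D\cap E=E_0$, and $\mc{P}_0\clrest E_0=\mc{H}\clrest E_0$, then
\[ \mc{P}_1=\big\{F\subseteq D\cup E:F\cap D\in\mc{P}_0\,\text{ and }\,F\cap E\in\mc{H}\big\}\]
is a hereditary cover of $D\cup E$ with $\mc{P}_1\clrest D=\mc{P}_0$ and $\mc{P}_1\clrest E=\mc{H}$. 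I would prove this by checking that for $F\subseteq D$ the condition ``$F\cap E\in\mc{H}$'' is automatic (indeed $F\cap E\subseteq E_0$, so $F\cap E\in\mc{P}_0\clrest E_0=\mc{H}\clrest E_0$ by heredity of $\mc{H}$), and symmetrically that for $F\subseteq E$ the condition ``$F\cap D\in\mc{P}_0$'' is automatic; heredity of $\mc{P}_1$ and the covering property are immediate. Transporting $\mc{H}$ along a given $\al_0$, this is exactly what the extension property demands at a single finite step.

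\textbf{Part (a).} I would construct $\mc{P}=\bigcup_n\mc{P}_n$ as an increasing union of hereditary covers $\mc{P}_n$ of finite sets $E_n\subseteq\NN$ with $\mc{P}_{n+1}\clrest E_n=\mc{P}_n$; this ``freezing'' of restrictions to already-absorbed finite sets is the key bookkeeping feature. Up to isomorphism there are only countably many requirements, i.e.\ triples $(\mc{H},E_0\subseteq E,\al_0)$ with $\mc{H}$ a hereditary cover of a finite $E$ and $\al_0\colon E_0\to\NN$, so fix an enumeration listing each of them infinitely often. At stage $n$ I first adjoin, if necessary, the least natural number not yet in $E_n$ (a trivial instance of the lemma with $E_0=\0$), so that $\mc{P}$ will cover $\NN$; then, if the $n$th requirement is \emph{active}, meaning $\al_0[E_0]\subseteq E_n$ and $\al_0$ is an isomorphism between $\mc{H}\clrest E_0$ and $\mc{P}_n\clrest\al_0[E_0]$, I apply the amalgamation lemma to pass to $\mc{P}_{n+1}$ together with an extension $\al\supseteq\al_0$ that is an isomorphism between $\mc{H}$ and $\mc{P}_{n+1}\clrest\al[E]$. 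To verify EP for $\mc{P}$: given any $\mc{H},E_0\subseteq E,\al_0$ as in the definition, since $E_0$ is finite we have $\al_0[E_0]\subseteq E_n$ for large $n$, and then $\mc{P}\clrest\al_0[E_0]=\mc{P}_n\clrest\al_0[E_0]$ by freezing, so the requirement is active from stage $n$ on; it is listed (hence handled) at some later stage $m$, and $\mc{P}\clrest\al[E]=\mc{P}_{m+1}\clrest\al[E]$ again by freezing, so the $\al$ produced there witnesses EP.

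\textbf{Parts (b) and (c).} For (b), fix $\mc{G}\in\mrm{FHC}$, enumerate its underlying set as $(g_k)$, and build nested finite isomorphisms $\al_0\subseteq\al_1\subseteq\cdots$, each $\al_n\colon E_n\to\NN$ between $\mc{G}\clrest E_n$ and $\mc{P}\clrest\al_n[E_n]$, starting from $\al_0=\0$; at step $n$ I apply EP of $\mc{P}$ with $\mc{H}=\mc{G}\clrest(E_n\cup\{g_n\})$, $E_0=E_n$, and the given map $\al_n$, obtaining $\al_{n+1}$ with $g_n\in E_{n+1}$. Then $e=\bigcup_n\al_n\colon\NN\to\NN$ is one-to-one, and for finite $G$, choosing $n$ with $G\subseteq E_n$, $G\in\mc{G}\iff\al_n[G]\in\mc{P}\iff e[G]\in\mc{P}$ (heredity of $\mc{P}$ lets one drop the restriction), so $\mc{P}$ is universal. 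For (c), the same construction run as a genuine back-and-forth between $\mc{P}$ and $\mc{G}$ --- alternately using EP of $\mc{G}$ to force the least remaining point of $\mc{P}$'s domain into the domain of the current partial isomorphism, and EP of $\mc{P}$, applied to the inverse partial isomorphism, to force the least remaining point of $\mc{G}$'s domain into its range --- produces a bijection $\NN\to\NN$ which is an isomorphism between $\mc{P}$ and $\mc{G}$.

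The only real content is the amalgamation lemma together with the observation that the freezing property $\mc{P}_{n+1}\clrest E_n=\mc{P}_n$ is precisely what lets the abstract quantity $\mc{P}\clrest\al_0[E_0]$ appearing in the definition of EP be read off from a finite approximation; granted these, (a), (b), and (c) are the textbook amalgamation, one-sided, and two-sided back-and-forth arguments. A minor point to keep in view is the handling of the degenerate cases $E_0=\0$ and $\mc{H}=\{\0\}$, which are all vacuous but should be noted explicitly.
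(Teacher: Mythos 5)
Your proposal is correct and follows essentially the same route as the paper: a Fra\"ise-type construction of $\mc{P}$ as an increasing union of finite hereditary covers with frozen restrictions, followed by the standard one-sided and two-sided back-and-forth arguments for (b) and (c). The only organizational difference is that you isolate the free amalgamation lemma explicitly and process requirements one at a time, whereas the paper adjoins all one-point hereditary extensions of the current finite stage simultaneously (relabelled to distinct new points); both come to the same thing.
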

\begin{proof} (a): Notice that we can always assume that $|E|=|E_0|+1$ and that $\al_0=\mrm{id}_{E_0}$. Now, $\mc{P}$ will be constructed by recursion of the form $\bigcup_{N\in S}\mc{P}_N$ where $\mc{P}_N$ is a hereditary cover of $\{1,2,\dots,N\}$, $S\subseteq\NN$ is infinite, and $\mc{P}_N\clrest \{1,2,\dots,M\}=\mc{P}_M$ whenever $M<N$, $M,N\in S$. Assume that we already have $\mc{P}_N$ for some $N$ (let $\mc{P}_0=\{\0\}$) and let $\{\mc{H}_i:i=1,2,\dots,K\}$ be an enumerations of all hereditary covers of $\{1,2,\dots,N,N+1\}$ satisfying $\mc{H}_i\clrest \{1,2,\dots,N\}=\mc{P}_N$. We can relabel $N+1$ to $N+i$ in the underlying set of $\mc{H}_i$ and define $\mc{P}_{N+K}=\bigcup_{i=1}^K\mc{H}_i$.

\smallskip
(b): Fix a $\mc{G}\in\mrm{FHC}$ and assume that we already defined the restriction $e_k$ of the desired embedding $e$ on $\{1,2,\dots,k\}$, that is, $e_k$ is an isomorphism between $\mc{G}\clrest \{1,\dots,k\}$ and $\mc{P}\clrest e_k[\{1,\dots,k\}]$. To obtain $e_{k+1}$ we simply apply the extension property for $\mc{H}=\mc{G}\clrest\{1,\dots,k,k+1\}$, $E_0=\{1,\dots,k\}$, and $\al_0=e_k$. Then $e=\bigcup_{k=1}^\infty e_k$ is an embedding of $\mc{G}$ in $\mc{F}$.

\smallskip
(c) follows from a ``zigzagging'' argument: We define an isomorphism $\al$ between $\mc{P}$ and $\mc{G}$ by recursion such that at odd stages ($2n-1$), applying that $\mc{G}$ satisfies EP, we make sure that $n\in\mrm{dom}(\al)$, and at even stage ($2n$), applying that $\mc{P}$ satisfies EP, we make sure that $n\in\mrm{ran}(\al)$.  
\end{proof}

The following result provides us with a very simple presentation of Pe\l czy\'nski's universal space, namely as a combinatorial space. 

\begin{cor} Let $\mc{P}$ be an universal family. Then the space $X_\mathcal{P}$ contains complemented copies of all Banach spaces with an unconditional basis. Consequently, $X_\mathcal{P}$ is isomorphic to Pe\l czy\'nski's universal space.
\end{cor}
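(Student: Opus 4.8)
The plan is to combine two ingredients already available in the paper. First, by Proposition \ref{fraise}(b), the universal family $\mc{P}$ contains, as an isomorphic restriction, every $\mc{F}\in\mrm{FHC}$; consequently $X_{\mc{P}}$ contains an isometric complemented copy of $X_{\mc{F}}$ for every such $\mc{F}$ (the complementation comes from the $1$-unconditionality of the basis: $X_{\mc{P}\clrest H}=[(e_n)_{n\in H}]$ is $1$-complemented in $X_{\mc{P}}$ via the coordinate projection $P_H$, which has norm $1$ by unconditionality, and $X_{\mc{P}\clrest H}$ is isometric to $X_{\mc{F}}$ because the two families generate the same norm under the relabelling). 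Second, I invoke Theorem \ref{prere}: for any Banach space $Y$ with a normalized $1$-unconditional basis, the family $\mc{F}(Y)$ has the property that $X_{\mc{F}(Y)}$ contains a complemented normalized block basic sequence $2$-equivalent to the basis of $Y$.

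Chaining these: given an arbitrary Banach space $Y$ with unconditional basis, after renormalizing we may assume the basis is normalized and $1$-unconditional (this only changes the norm by an equivalence, hence changes ``complemented copy of $Y$'' only up to isomorphism, which is all we need). Form $\mc{F}(Y)\in\mrm{FHC}$ and embed it into $\mc{P}$ by Proposition \ref{fraise}(b). Then $X_{\mc{P}}$ contains a complemented copy of $X_{\mc{F}(Y)}$, which in turn contains a complemented subspace isomorphic to $Y$; since ``complemented in a complemented subspace'' is ``complemented'', $X_{\mc{P}}$ contains a complemented copy of $Y$. As $Y$ was arbitrary, $X_{\mc{P}}$ contains complemented copies of all Banach spaces with an unconditional basis, and $X_{\mc{P}}$ itself is such a space (its canonical basis $(e_n)$ is normalized $1$-unconditional). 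By the characterization of Pe\l czy\'nski's universal space $U$ from \cite{universalPelczynski} — namely, $U$ is the unique (up to isomorphism) space with an unconditional basis that contains complemented copies of all spaces with an unconditional basis — we conclude $X_{\mc{P}}\simeq U$.

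The only real subtlety is making the two complementation steps compose cleanly: one must check that the isomorphic embedding $e\colon\NN\to\NN$ of $\mc{F}(Y)$ into $\mc{P}$ induces an \emph{isometric} identification of $X_{\mc{F}(Y)}$ with a $1$-complemented subspace of $X_{\mc{P}}$, and then transfer the bounded projection $T$ from Theorem \ref{prere} through this identification; the composition of $T$ (bounded) with the coordinate projection $P_{e[\Omega]}$ (norm $1$) is the desired bounded projection of $X_{\mc{P}}$ onto the copy of $Y$. This is routine once one notes that $P_A$ has norm $1$ for every $A\subseteq\NN$ by unconditionality, so no constants blow up. The appeal to the uniqueness characterization of $U$ is a black box cited from \cite{universalPelczynski} and is not proved here.
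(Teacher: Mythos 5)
Your proposal is correct and follows essentially the same route as the paper's own proof: apply Theorem \ref{prere} to get a family $\mc{F}(Y)$ whose space contains a complemented copy of $Y$, use the universality of $\mc{P}$ (Proposition \ref{fraise}) to embed $X_{\mc{F}(Y)}$ as a complemented subspace of $X_\mc{P}$, and conclude via Pe\l czy\'nski's uniqueness characterization. The extra details you supply (renormalization of the basis, transitivity of complementation, the norm-one coordinate projection) are correct and are left implicit in the paper.
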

\begin{proof} 
Let $Y$ be a Banach space with an unconditional basis. Theorem \ref{prere} gives us an $\mathcal{F}$ such that $X_\mathcal{F}$ contains a complemented copy of $Y$. The space $X_\mc{P}$ contains a complemented copy of $X_\mc{F}$ and hence of $Y$ as well. By \cite[Corollary 4]{universalPelczynski}, $X_\mathcal{P}$ is isomorphic to Pe\l czy\'nski's universal space.
\end{proof}

\begin{rem}
The family $\mc{P}$ satisfying EP is the Fra\"isse limit of the family of all finite hereditary families. In \cite{Garbulinska} the authors constructed Pe\l
czy\'nski's space as the Fra\"isse limit of a certain family of finite dimensional Banach spaces.
\end{rem}

\begin{rem}
	The above example provides a solution for one of Pe\l czy\'nski's questions, \cite[Problem 4]{universalPelczynski}, which seems to be still open. The canonical basis $(e_n)$ of $X_\mc{P}$, where $\mc P$ is a universal family, is not permutatively equivalent
	to Pe\l czy\'nski's universal unconditional basis $(u_n)$ of his universal space (see \cite[Problem 4]{universalPelczynski}), i.e. there is no permutation $\pi$ such that $(e_{\pi(n)})$ is equivalent to $(u_n)$. Indeed, contrary to the case of
	$(u_n)$, the base of our space is not universal. E.g. no subsequence of $(e_n)$ is equivalent to the canonical basis of $\ell_2$ because, assuming $H\subseteq\NN$ is infinite, either $X_{\mc{P}\upharpoonright H}=[(e_n)_{n\in H}]$ contains a copy of $\ell_1$ or $X_{\mc{P}\upharpoonright H}$ is $c_0$-saturated. 
\end{rem}

\begin{rem} Assuming $\mc{P}$ is universal, the space $X_\mathcal{P}$ contains a complemented copy of $B=X_\mc{C}$ (see Example \ref{exa-chains}) which, on the other hand, contains copies of every Banach space with unconditional basis. But, of course, from this fact we cannot a priori conclude that these copies are complemented. 
\end{rem}

\section{The ideal $\mc{H}(\mc{F},\neg c_0)$ and its relatives}\label{J ideals}

As the space $X_\mc{F}$ can be seen as an amalgamation of $c_0$ and $\ell_1$, it is natural to somehow measure how far $X_\mc{F}$ is from e.g. $c_0$:
\[ \mc{H}(\mc{F},\neg c_0)=\big\{H\subseteq\NN:X_{\mc{F}\upharpoonright H}\,\text{ does not contain copies of }\,c_0\big\}\]
where $\mc{F}\clrest H=\{F\in\mc{F}:F\subseteq H\}$ and hence $X_{\mc{F}\upharpoonright H}=[(e_n)_{n\in H}]$ is the closed linear span of $\{e_n:n\in H\}$ in $X_\mc{F}$. Clearly, this family is hereditary and contains all finite subsets of $\NN$. For example, $\mc{F}$ is compact iff $\mc{H}(\mc{F},\neg c_0)=[\NN]^{<\infty}$. Similarly, we can define $\mc{H}(\mc{F},\neg \ell_1)$, and in general, given a (hereditary) property $\Phi$ of Banach spaces, we can define $\mc{H}(\mc{F},\Phi)$.

\smallskip
Bringing $\mrm{FIN}(\mc{F})$ into play as well, recall that $X_\mc{F}=\mrm{EXH}(\mc{F})$ does not contain $c_0$ iff $(e_n)$ is boundedly complete iff $\mrm{EXH}(\mc{F})=\mrm{FIN}(\mc{F})$. In particular, 
\[ \mc{H}(\mc{F},\neg c_0)=\big\{H\subseteq\NN:\mrm{EXH}(\mc{F}\clrest H)=\mrm{FIN}(\mc{F}\clrest H)\big\}.\] It follows easily that $\mc{H}(\mc{F},\neg c_0)$ is closed under taking unions of finitely many elements from it, hence it is an ideal. Also, $\mc{H}(\mc{F},\neg c_0)$ is the coprojection to the first coordinate of the $G_{\delta\sigma}$ set 
\[ \big\{(H,x)\in [\NN]^\infty\times\mbb{R}^\NN:x\clrest H\in\mrm{FIN}(\mc{F}\clrest H)\setminus\mrm{EXH}(\mc{F}\clrest H)\big\},\] 
therefore $\mc{H}(\mc{F},\neg c_0)$ is coanalytic.

\begin{prop}\label{thm:no c0}
Let $\mc{F}\in\mrm{FHC}$. Then the following are equivalent:
\begin{itemize}\setlength\itemsep{0.1cm}
\item[(i)] $X_\mc{F}$ does not contain $c_0$.
\item[(ii)] $\mc{H}(\mc{F},\neg c_0)=\mc{P}(\NN)$.
\item[(iii)] $\mc{H}(\mc{F},\neg c_0)$ is not null in $\mc{P}(\NN)\simeq 2^\NN$.
\item[(iv)] $\mc{H}(\mc{F},\neg c_0)$ is not meager in $\mc{P}(\NN)\simeq 2^\NN$. 
\end{itemize} 
\end{prop}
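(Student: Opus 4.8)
The plan is to prove the cycle $(i)\Rightarrow(ii)\Rightarrow(iii)\Rightarrow(i)$ and, separately, $(ii)\Rightarrow(iv)\Rightarrow(i)$, with the implications $(ii)\Rightarrow(iii)$ and $(ii)\Rightarrow(iv)$ being trivial (the whole space $\mc{P}(\NN)$ is neither null nor meager). The implication $(i)\Rightarrow(ii)$ is also immediate: if $X_\mc{F}$ itself does not contain $c_0$, then since $X_{\mc{F}\clrest H}=[(e_n)_{n\in H}]$ is a (closed, complemented along the unconditional basis) subspace of $X_\mc{F}$, it cannot contain $c_0$ either; hence every $H\subseteq\NN$ belongs to $\mc{H}(\mc{F},\neg c_0)$. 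So the real content is in the two ``smallness'' implications $(iii)\Rightarrow(i)$ and $(iv)\Rightarrow(i)$, equivalently their contrapositives: if $X_\mc{F}$ contains $c_0$, then $\mc{H}(\mc{F},\neg c_0)$ is both null and meager.

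So assume $X_\mc{F}$ contains $c_0$. By the discussion preceding $(\neg c_0)$ in Section \ref{towards}, there is an $\mc{F}$-supported normalized block basic sequence $(x_n)$ with $\|s(\overline{x})\|_\mc{F}<\infty$, say $\sup_{A\in\overline{\mc{F}}}\sum_n\|P_A(x_n)\|_\mc{F}=:M<\infty$; put $F_n=\mrm{supp}(x_n)\in\mc{F}$, a block sequence of finite sets, and let $A_\infty=\bigcup_n F_n$. The key observation is that for \emph{any} infinite set $H$ that contains infinitely many of the $F_n$ entirely, the sequence $(x_n)_{n\in\{k:F_k\subseteq H\}}$ witnesses a copy of $c_0$ inside $X_{\mc{F}\clrest H}$, so $H\notin\mc{H}(\mc{F},\neg c_0)$. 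Thus $\mc{H}(\mc{F},\neg c_0)$ is contained in the complement of
\[ \mc{U}=\big\{H\subseteq\NN:F_n\subseteq H\text{ for infinitely many }n\big\}=\bigcap_{N}\bigcup_{n\geq N}\{H:F_n\subseteq H\}. \]
Now I would check that $\mc{U}$ is comeager and conull in $2^\NN$: each set $\bigcup_{n\geq N}\{H:F_n\subseteq H\}$ is open, and it is dense because any basic clopen set (determined by finitely many coordinates) contains, for all large $n$, a point extending it that also contains $F_n$ (the $F_n$ are eventually disjoint from any fixed finite set, as $(F_n)$ is a block sequence); so $\mc{U}$ is comeager. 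For measure, $\PP(\{H:F_n\subseteq H\})=2^{-|F_n|}$, but since the $F_n$ are nonempty these probabilities need not be summable, so Borel--Cantelli in the naive form does not apply. However, the events $\{H:F_n\subseteq H\}$ over a block sequence of \emph{disjoint} sets are \emph{independent}, so by the second Borel--Cantelli lemma, $\sum_n 2^{-|F_n|}=\infty$ forces $\PP(\mc{U})=1$, while if $\sum_n 2^{-|F_n|}<\infty$ one must argue differently — and here is the one genuine subtlety. In that case, I would not use these particular $F_n$ but first \emph{thin and shrink} the witnessing block sequence: replace each $x_n$ by $P_{E_n}(x_n)$ for a suitably small $E_n\subseteq F_n$. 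The point is that $c_0$ being contained in $X_\mc{F}$ is equivalent to the existence of a normalized block sequence with bounded formal sum, and if $\sum_n 2^{-|\mrm{supp}(x_n)|}<\infty$ then, passing to $P_{E_n}(x_n)/\|P_{E_n}(x_n)\|$ with $|E_n|$ chosen so that the norms stay bounded below while $\sum_n 2^{-|E_n|}=\infty$, one still gets a $c_0$-sequence but now with a divergent sum — alternatively, and more robustly, just observe directly that $\mc{H}(\mc{F},\neg c_0)$, being an ideal containing all finite sets, is either $\mc{P}(\NN)$ or is a proper ideal, and a proper ideal on $\NN$ that is ($\sigma$-)downward closed and meets every tail is always both meager and null by the standard $0$--$1$-law argument (e.g. the Talagrand/Jalali-Naini characterization: an ideal is meager iff it is nonmeager-avoiding on a partition, and analytic proper ideals are never nonmeager; being coanalytic here, one invokes that a coanalytic proper ideal extending $\mrm{Fin}$ is meager, hence by the measure–category duality for such ideals also null).

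The main obstacle, then, is precisely pinning down the measure statement $(iii)$. I expect the cleanest route is the last one: once we know $\mc{H}(\mc{F},\neg c_0)$ is a \emph{proper} coanalytic ideal extending the ideal of finite sets (which is what $X_\mc{F}\supseteq c_0$ gives us via $(i)\Rightarrow\neg(ii)$), the classical dichotomy for such ideals — a coanalytic (indeed, already analytic suffices, but we have coanalytic from the remark before the proposition) proper ideal containing $\mrm{Fin}$ is automatically meager, and, by the $0$--$1$ law, any ideal is either measure zero or non-measurable, and a proper analytic/coanalytic ideal cannot be non-measurable, hence is null — delivers $(iv)$ and $(iii)$ simultaneously from $\neg(ii)$, closing both cycles. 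I would present the explicit $\mc{U}$-argument above for the comeager/conull case as the ``hands-on'' version and invoke the ideal dichotomy to mop up the residual case, so that the proof is self-contained modulo the cited fact about proper analytic ideals.
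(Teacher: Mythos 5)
Your fallback argument is, in substance, exactly the paper's proof: $\mc{H}(\mc{F},\neg c_0)$ is a coanalytic ideal, hence a tail-set that is both measurable and has the Baire property, so the zero--one laws make it null-or-conull and meager-or-comeager, and the complementation map $H\mapsto\NN\setminus H$ rules out the conull and comeager alternatives for a proper ideal (if $\mc{H}$ is a proper ideal then $\mc{H}\cap\{\NN\setminus H:H\in\mc{H}\}=\0$). This yields (iii)$\to$(ii) and (iv)$\to$(ii) simultaneously, and the remaining implications are trivial as you say; so the proposal is correct. One warning about the ``hands-on'' half you intend to keep: its measure part is not merely subtle but unsalvageable by shrinking supports. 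If the witnessing sequence is, say, $x_n=\chi_{F_n}/|F_n|$ with $F_n\in\mc{F}$ and $|F_n|$ growing fast, then keeping $\|P_{E_n}(x_n)\|_\mc{F}$ bounded below forces $|E_n|\geq\delta|F_n|$, so you cannot arrange $\sum_n 2^{-|E_n|}=\infty$; and when $\sum_n 2^{-|F_n|}<\infty$ the first Borel--Cantelli lemma makes your set $\mc{U}$ \emph{null}, so the inclusion $\mc{H}(\mc{F},\neg c_0)\subseteq\mc{P}(\NN)\setminus\mc{U}$ carries no measure information whatsoever. I would therefore drop the explicit $\mc{U}$-computation for measure entirely (it is fine, and a nice concrete touch, for category) and present only the tail-set/ideal dichotomy, which is what the paper does by citing the classical zero--one laws and then the complementation observation.
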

\begin{proof} 
\smallskip
(i)$\leftrightarrow$(ii), (ii)$\to$(iii), and (ii)$\to$(iv) are trivial. 

\smallskip
(iii)$\to$(ii) and (iv)$\to$(ii) can be shown as follows: We say that an $\mc{H}\subseteq\mc{P}(\NN)$ is a \emph{tail-set} if $\mc{H}$ is closed for finite modifications (that is, if $H\in\mc{H}$, $A\subseteq\NN$, and $|H\triangle A|<\infty$, then $A\in\mc{H}$ as well). For example, ideals on $\NN$ are tail-sets. We know (see e.g. \cite[Theorem 21.3 and 21.4]{Oxtoby}) that if $\mc{H}\subseteq\mc{P}(\NN)$ is a measurable tail-set, then $\mc{H}$ is of measure $0$ or $1$; and if $\mc{H}$ is a tail-set with the Baire property (BP), then $\mc{H}$ is meager or comeager. Also, the measure preserving homeomorphism $C:\mc{P}(\NN)\to\mc{P}(\NN)$, $H\mapsto \NN\setminus H$ witnesses that measurable ideals are of measure $0$ or $=\mc{P}(\NN)$ and that ideals with the BP are meager or $=\mc{P}(\NN)$ (because if $\mc{H}\ne\mc{P}(\NN)$ then $\mc{H}\cap C[\mc{H}]=\0$). We just have to apply these results to the ideal $\mc{H}(\mc{F},\neg c_0)$ (and recall that coanalytic sets are measurable and have the BP).
\end{proof}

Regarding $\mc{H}(\mc{F},\neg \ell_1)$, applying Theorem \ref{noell1}, it follows that
\begin{align*} 
\mc{H}(\mc{F},\neg \ell_1)&=\big\{H\subseteq\NN:X_{\mc{F}\upharpoonright H}\,\text{ does not contain }\,\ell_1\big\}\\
&=\big\{H\subseteq\NN:\mc{F}\clrest H\,\text{ is compact in }\,\mc{P}(H)\big\}\\
&=\mc{P}(\NN)\setminus\big\{A\subseteq\NN:\exists\,B\in\overline{\mc{F}}\cap [\NN]^\infty\;A\supseteq B\big\}
\end{align*}
is also an ideal; and just like its cousin, $\mc{H}(\mc{F},\neg\ell_1)$ is also $\mbf{\Pi}^1_1$ because it is the coprojection to the first coordinate of the closed set 
\[ \big\{(A,B)\in\mc{P}(\NN)\times \big([\NN]^\infty\cap\overline{\mc{F}}\big):A\supseteq B\big\}.\]
In particular, the $\ell_1$ versions of Proposition \ref{thm:no c0} holds, and it says that the following are equivalent: (i) $X_\mc{F}$ does not contain $\ell_1$. (ii) $\mc{H}(\mc{F},\neg\ell_1)=\mc{P}(\NN)$. (iii) $\mc{H}(\mc{F},\neg\ell_1)$ is not null. (iv) $\mc{H}(\mc{F},\neg\ell_1)$ is not meager. 

\smallskip
Let us see some concrete examples. As above, let $I_n=[2^{n-1},2^n)$, $n\geq 1$, and for $H\subseteq \NN$ let us denote $D_H=\{n:H\cap I_n\ne\0\}$. Also, let us recall the following definitions:
\begin{align*}
\mrm{Fh}&=\bigg\{F\in [\NN]^{<\infty}:|F\cap I_n|\leq \frac{2^{n-1}}{n}\,\text{ for every }\,n\bigg\}\\
\mrm{iFh}&=\big\{F\cup E:F\in\mrm{Fh}\,\text{ and }\,E\subseteq I_n\,\text{ for some }\,n\big\}
\end{align*}

Clearly, $\mc{H}(\mrm{iFh},\neg\ell_1)=[\NN]^{<\infty}$.

\begin{prop}\label{Fhtilde} $\mc{H}(\mrm{iFh},\neg c_0)=\{H\subseteq\NN:\inf_{n\in D_H}\frac{2^{n-1}}{n|H\cap I_n|}>0\}$ and hence it is
$F_\sigma$.
\end{prop}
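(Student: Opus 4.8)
\medskip
\noindent\textbf{Proof plan.}
Write $r_n(H)=\frac{2^{n-1}}{n\,|H\cap I_n|}$ for $n\in D_H$ and let $\mc{G}$ denote the right-hand side, i.e. $\mc{G}=\{H:\inf_{n\in D_H}r_n(H)>0\}$. First I would dispose of the ``hence'' clause: $\mc{G}=\bigcup_{j\geq 1}\mc{G}_j$ where $\mc{G}_j=\{H\subseteq\NN:|H\cap I_n|\leq j\cdot 2^{n-1}/n\text{ for every }n\}$ (the condition is vacuous for $n\notin D_H$), and each set $\{H:|H\cap I_n|\leq j\cdot 2^{n-1}/n\}$ depends only on the coordinates of $H$ inside $I_n$, hence is clopen; so $\mc{G}_j$ is closed and $\mc{G}$ is $F_\sigma$. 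Thus everything reduces to the equality $\mc{H}(\mrm{iFh},\neg c_0)=\mc{G}$, which I prove by two inclusions, using throughout the elementary sandwich $\|\bullet\|_{\mrm{Fh}}\leq\|\bullet\|_{\mrm{iFh}}\leq\|\bullet\|_{\mrm{Fh}}+\sup_n\|P_{I_n}(\bullet)\|_1$ (the right inequality because any $F\in\mrm{iFh}$ splits as $F'\cup E$ with $F'=F\setminus I_{n_0}\in\mrm{Fh}$ and $E\subseteq I_{n_0}$ for the exceptional interval $I_{n_0}$), together with $\|x\|_{\mrm{Fh}}=\sum_n\|x\|_{n,n}$ and the fact that a combinatorial norm is monotone under coordinatewise $\leq$ of absolute values.

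For $\mc{G}\subseteq\mc{H}(\mrm{iFh},\neg c_0)$: fix $H\in\mc{G}$ and set $c=\inf_{n\in D_H}r_n(H)>0$, so $|H\cap I_n|\leq\frac1c\cdot\frac{2^{n-1}}{n}$ for $n\in D_H$; since $2^{n-1}\geq n$ for every $n\geq 1$ one has $2^{n-1}/n\leq 2\lfloor 2^{n-1}/n\rfloor$, hence $|H\cap I_n|\leq\frac2c\lfloor 2^{n-1}/n\rfloor$. Therefore, for $x\in\mbb{R}^H$ and any $n$, splitting $H\cap I_n$ into $\lceil 2/c\rceil$ blocks each of size $\leq\lfloor 2^{n-1}/n\rfloor$ gives $\|P_{I_n}x\|_1=\|P_{H\cap I_n}x\|_1\leq\lceil 2/c\rceil\,\|x\|_{n,n}\leq\lceil 2/c\rceil\,\|x\|_{\mrm{Fh}}$. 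Plugging this into the sandwich shows $\|\bullet\|_{\mrm{iFh}}$ and $\|\bullet\|_{\mrm{Fh}}$ are equivalent on $\mbb{R}^H$, i.e. $X_{\mrm{iFh}\clrest H}$ is isomorphic to $X_{\mrm{Fh}\clrest H}=[(e_n)_{n\in H}]\subseteq X_{\mrm{Fh}}$. Since $\mrm{Fh}$ satisfies $(S^*)$ (Example~\ref{exa-farah}), hence $(S)$, hence the Schur property by Theorem~\ref{Schur}, the same holds for its subspace; in particular $X_{\mrm{iFh}\clrest H}$ contains no copy of $c_0$, so $H\in\mc{H}(\mrm{iFh},\neg c_0)$.

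For the reverse inclusion I argue the contrapositive: assume $\inf_{n\in D_H}r_n(H)=0$. A brief argument (if $\inf\{r_n(H):n\in D_H,\,n\geq N\}$ were positive for some $N$, then $\inf_{n\in D_H}r_n(H)$, being a minimum of that positive number with an infimum over a finite set of positive numbers, would be positive too) shows $D_H$ is infinite and lets me pick $m_1<m_2<\cdots$ in $D_H$ with $r_{m_k}(H)<2^{-k}$. Put $x_k=\chi_{H\cap I_{m_k}}/|H\cap I_{m_k}|$; each $H\cap I_{m_k}$ lies in a single interval, so it belongs to $\mrm{iFh}\clrest H$, making $(x_k)$ an $\mrm{iFh}\clrest H$-supported normalized block basic sequence in $X_{\mrm{iFh}\clrest H}$. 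Since $\|x_k\|_{\mrm{Fh}}=\|x_k\|_{m_k,m_k}\leq r_{m_k}(H)<2^{-k}$ and $\sup_n\|P_{I_n}(\sum_{k\leq N}x_k)\|_1=1$, the sandwich gives $\|\sum_{k\leq N}x_k\|_{\mrm{iFh}}<\sum_k 2^{-k}+1<2$ for all $N$; combined with monotonicity of the norm this yields $\max_k|a_k|\leq\|\sum_k a_kx_k\|_{\mrm{iFh}\clrest H}\leq 2\max_k|a_k|$, so $(x_k)$ spans a copy of $c_0$ and $H\notin\mc{H}(\mrm{iFh},\neg c_0)$.

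The computations involved (the floor bookkeeping $2^{n-1}/n\leq 2\lfloor 2^{n-1}/n\rfloor$, the $\lceil 2/c\rceil$-block split, the evaluation of $\|x_k\|_{m_k,m_k}$) are routine; the only point needing a moment's thought rather than pure arithmetic is the extraction of the increasing sequence $(m_k)$, handled as above. I do not expect a real obstacle: the morally right picture is that adjoining the intervals to $\mrm{Fh}$ distorts the norm by only a bounded factor precisely on those $H$ whose slices $H\cap I_n$ never exceed a fixed multiple of the $\mrm{Fh}$-budget $2^{n-1}/n$, and manufactures an (almost isometric) copy of $c_0$ exactly when the slices outgrow every such multiple.
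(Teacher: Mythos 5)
Your proposal is correct. The second half (manufacturing a copy of $c_0$ from a sequence $m_1<m_2<\cdots$ in $D_H$ along which $\tfrac{2^{n-1}}{n|H\cap I_n|}\to 0$, using the vectors $\chi_{H\cap I_{m_k}}/|H\cap I_{m_k}|$ and the bound $\|\cdot\|_{\mrm{iFh}}\le\|\cdot\|_{\mrm{Fh}}+\sup_n\|P_{I_n}(\cdot)\|_1$) is essentially identical to the paper's argument. The first half, however, takes a genuinely different route: you show that when $\inf_{n\in D_H}\tfrac{2^{n-1}}{n|H\cap I_n|}=c>0$ the norms $\|\cdot\|_{\mrm{iFh}}$ and $\|\cdot\|_{\mrm{Fh}}$ are equivalent on $c_{00}(H)$ (via the $\lceil 2/c\rceil$-block splitting of each $H\cap I_n$), and then quote that $\mrm{Fh}$ satisfies ($S^*$) and hence $X_{\mrm{Fh}}$ has the Schur property by Theorem~\ref{Schur}, which passes to the isomorphic subspace and excludes $c_0$. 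The paper instead argues directly inside $X_{\mrm{iFh}\upharpoonright H}$: for any normalized block basic sequence $(x_i)$ it selects, for each $i$ and each $n$, a set $H_{i,n}\subseteq\mrm{supp}(x_i)\cap I_n$ of size at most $2^{n-1}/n$ carrying an $\eps$-fraction of the $\ell_1$-mass, obtaining $\|P_{H_i}(x_i)\|_{\mrm{Fh}}\ge\eps$ and hence a uniform $\ell_1$-lower estimate $\|\sum_i a(i)x_i\|_{\mrm{iFh}}\ge\eps\sum_i|a(i)|$. The paper's computation yields the stronger conclusion that every block basic sequence in $X_{\mrm{iFh}\upharpoonright H}$ has an $\ell_1$-subsequence, while your argument is more modular and shorter, at the price of leaning on the (externally cited) fact that $\mrm{Fh}$ satisfies ($S^*$); both are sound, and your explicit $F_\sigma$ decomposition $\mc{G}=\bigcup_j\mc{G}_j$ fills in a detail the paper dismisses as clear.
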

\begin{proof}
The family on the right side is clearly $F_\sigma$ ($\inf\0=\infty$ by definition).

\smallskip
First let $H\in [\NN]^\infty$ such that $\frac{2^{n-1}}{n|H\cap I_n|}\xrightarrow{n\in D} 0$ for some $D\in [D_H]^\infty$, w.l.o.g. we can assume that 
\[ s:=\sum_{n\in D} \frac{2^{n-1}}{n|H\cap I_n|}<\infty.\] For $n\in D$ define $x_n=\chi_{H\cap I_n}/|H\cap I_n|$. Then $(x_n)_{n\in D}$ is a normalized block basic sequence in $X_{\mrm{iFh}\upharpoonright H}$ and if $D=\{n_1<n_2<\dots\}$ then
\begin{align*}
\bigg\|\sum_{i=1}^mx_{n_i}\bigg\|_{\mrm{iFh}}&\leq\bigg\|\sum_{i=1}^mx_{n_i}\bigg\|_\mrm{Fh}+1\leq \sum_{i=1}^m\frac{\min\big(|H\cap I_{n_i}|,\frac{2^{n_i-1}}{n_i}\big)}{|H\cap I_{n_i}|}+1\\
&\leq \sum_{i=1}^m\frac{2^{n_i-1}}{n_i|H\cap I_{n_i}|}+1\leq s+1,
\end{align*} 
and hence, $(x_n)_{n\in D}$ is equivalent to the canonical basis of $c_0$.

\smallskip
Conversely, fix an $H\in [\NN]^\infty$ from the family on the right and an $\eps>0$ such that $\lfloor 2^{n-1}/n\rfloor\geq \eps|H\cap I_n|$ for every $n\in D_H$. We show that no normalized block basic sequence $(x_i)$ in
	$X_{\mrm{iFh}\upharpoonright H}$ can be equivalent to the usual basis of $c_0$. By thinning out such a sequence, we can assume that the sets $D_i=\{n:\mrm{supp}(x_i)\cap I_n\ne\0\}\subseteq D_H$ are consecutive. For each $i$ and $n\in D_i$ we can pick an $H_{i,n}\subseteq\mrm{supp}(x_i)\cap I_n\subseteq H\cap I_n$ of size $\leq 2^{n-1}/n$ such that 
\[ \sum_{k\in H_{i,n}}|x_i(k)|\geq \eps \sum_{k\in H\cap I_n}|x_i(k)|,\] 
and let $H_i=\bigcup_{n\in D_i}H_{i,n}\in\mrm{Fh}$. It follows that $\|P_{H_i}(x_i)\|_{\mrm{Fh}}\geq \eps \|x_i\|_1\geq\eps\|x_i\|_\mrm{iFh}=\eps$, and, of course, $\bigcup_{i=1}^nH_i\in\mrm{Fh}$ for every $n$, hence if $a\in\mbb{R}^n$, then  
\[\bigg\|\sum_{i=1}^n a(i)x_i\bigg\|_{\mrm{iFh}}\geq\bigg\|\sum_{i=1}^n a(i)x_i\bigg\|_{\mrm{Fh}} \geq \sum_{i=1}^n|a(i)| \|P_{H_i}(x_i)\|_{\mrm{Fh}}\geq \eps \sum_{i=1}^n|a(i)|,\] 
therefore, $(x_i)$ (after thinning out at the beginning) is equivalent to the canonical basis of $\ell_1$.  
\end{proof}

We recall the definition of the family $\mc{A}$ from Example \ref{exa-antichains}:
\[ \mc{A}=\big\{F\subseteq 2^{<\NN}:F\,\text{ is a finite antichain}\big\}.\]

\begin{prop}\label{A-no-c0} $\mc{H}(\mc{A},\neg c_0)=\big\{H\subseteq 2^{<\NN}:H$ does not contain infinite chains$\}$ and it is $\mbf{\Pi}^1_1$-complete in $\mc{P}(2^{<\NN})$.
\end{prop}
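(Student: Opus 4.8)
The plan is to prove the two halves of the statement separately. For the identity $\mc{H}(\mc{A},\neg c_0)=\{H\subseteq 2^{<\NN}:H\text{ contains no infinite chain}\}$, I would argue as follows. If $H$ contains an infinite chain $C=\{t_0\subsetneq t_1\subsetneq\cdots\}$, then the vectors $e_{t_k}$ with $k\in\NN$ span an isometric copy of $c_0$ inside $X_{\mc{A}\clrest H}$: any finite subset of a chain is not an antichain (has at most one element), so $\|\sum_{k\in E}a(k)e_{t_k}\|_{\mc{A}}=\max_k|a(k)|$ for finite $E$, which gives $c_0$. Hence $H\notin\mc{H}(\mc{A},\neg c_0)$. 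Conversely, if $H$ contains no infinite chain, then by König's lemma (applied to the tree ordering restricted to $H$, or rather to the downward closure considerations) every chain in $H$ is finite, but more is needed: I would show $\mc{A}\clrest H$ is compact in $\mc{P}(H)$, equivalently, contains no infinite $\subseteq$-chain (Section~\ref{relevant}). A nested sequence $F_0\subsetneq F_1\subsetneq\cdots$ of finite antichains in $H$ would yield an infinite antichain $A=\bigcup_n F_n\subseteq H$; but does an infinite antichain give an infinite chain? No — so compactness of $\mc{A}\clrest H$ can genuinely fail even when $H$ has no infinite chain. The correct route: $H$ has no infinite chain iff $H$ has finite height iff... actually this is false too (the full antichain of all nodes at levels forming an infinite set has no infinite chain). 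So the right statement is simply that $X_{\mc{A}\clrest H}$ does not contain $c_0$ iff $H$ has no infinite chain, and the forward direction (no infinite chain $\Rightarrow$ no $c_0$) uses the structure of $S$: an $\mc{A}$-supported normalized block basic sequence $(x_n)$, if $\|s(\overline{x})\|_{\mc{A}}<\infty$, would need a single antichain $A\in\overline{\mc{A}\clrest H}$ with $\sum_n\|P_A(x_n)\|_1<\infty$; but if $H$ has no infinite chain then along any branch $H$ meets only finitely many nodes, forcing the supports of the $x_n$ to spread across incomparable nodes, and then one builds an antichain hitting each $x_n$ with mass bounded below — contradicting $(\neg c_0)$. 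I would carefully assemble this using the criterion $(\neg c_0)$ from Section~\ref{towards}.

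For $\mbf{\Pi}^1_1$-completeness, I would reduce the known $\mbf{\Pi}^1_1$-complete set $\mathrm{WF}=\{T\subseteq\NN^{<\NN}:T\text{ is a well-founded tree}\}$, equivalently the set of trees with no infinite branch, to $\mc{H}(\mc{A},\neg c_0)$. The idea is to fix a computable, injective order-preserving map $\varphi:\NN^{<\NN}\to 2^{<\NN}$ (so that $s\subseteq t$ iff $\varphi(s)\subseteq\varphi(t)$, which is standard — think of coding each natural number $n$ by the block $0^n1$ and concatenating). Given a tree $T\subseteq\NN^{<\NN}$, let $H_T=\varphi[T]\subseteq 2^{<\NN}$. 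Then chains in $H_T$ correspond exactly to chains in $T$, i.e., to branches through $T$; so $T$ is well-founded iff $H_T$ contains no infinite chain iff $H_T\in\mc{H}(\mc{A},\neg c_0)$. The map $T\mapsto H_T$ is continuous from $\mc{P}(\NN^{<\NN})$ (with product topology) to $\mc{P}(2^{<\NN})$, since membership of a fixed node $\varphi(s)$ in $H_T$ depends only on whether $s\in T$. This gives a continuous reduction of $\mathrm{WF}$ to $\mc{H}(\mc{A},\neg c_0)$, proving $\mbf{\Pi}^1_1$-hardness; combined with the general fact from the start of this section that $\mc{H}(\mc{F},\neg c_0)$ is always $\mbf{\Pi}^1_1$, we conclude $\mbf{\Pi}^1_1$-completeness.

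The main obstacle I anticipate is the forward direction of the set identity — showing that if $H$ has no infinite chain then $X_{\mc{A}\clrest H}$ genuinely contains no copy of $c_0$. One has to be careful because $H$ can still carry large (even infinite) antichains, and $c_0$ could a priori hide in a subtler block subspace than the obvious $(e_{t_n})$. The argument must use that every branch of $2^{<\NN}$ meets $H$ finitely often: given an $\mc{A}$-supported normalized block basic $(x_n)$, I would inductively pick for each $n$ a node $t_n\in\mrm{supp}(x_n)$ carrying a definite fraction of the $\ell_1$-mass of $x_n$, but to form an \emph{antichain} from the $t_n$'s I need to avoid comparabilities; since $H$ has no infinite chain, a König-type / Ramsey argument lets me thin out $(x_n)$ so that the chosen nodes form an antichain $A$, and $A\in\overline{\mc{A}\clrest H}$ with $\sum_n\|P_A(x_n)\|_1=\infty$, contradicting $(\neg c_0)$. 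Making the "definite fraction" step work may require passing from $x_n$ to $P_{F_n}(x_n)$ for a single antichain $F_n$ realizing the norm and then extracting one node, which is fine; the bookkeeping to guarantee incomparability across all $n$ simultaneously is the delicate point.
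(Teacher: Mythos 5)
Your easy direction (an infinite chain spans an isometric $c_0$) and your complexity reduction (an order-preserving embedding $\varphi:\NN^{<\NN}\to 2^{<\NN}$ with $s\subseteq t$ iff $\varphi(s)\subseteq\varphi(t)$, sending well-founded trees to chain-free sets) are both correct and the latter is exactly the paper's reduction. The genuine gap is in the forward direction of the set identity, and it sits precisely where you flagged the ``delicate point''. Your extraction scheme picks \emph{one} node $t_n\in\mrm{supp}(x_n)$ ``carrying a definite fraction of the $\ell_1$-mass of $x_n$'', and you assert this is fine after replacing $x_n$ by $P_{F_n}(x_n)$ for a norming antichain $F_n$. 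It is not: on an antichain the $\mc{A}$-norm is the $\ell_1$-norm, so a normalized block can be $\chi_{F_n}/|F_n|$ with $|F_n|\to\infty$, and then every single node carries mass $1/|F_n|\to 0$. With $A=\{t_n:n\in\NN\}$ you would only get $\sum_n\|P_A(x_n)\|_1=\sum_n|x_n(t_n)|$, which can converge, so no contradiction with $(\neg c_0)$ follows. To get a lower bound you must keep a sub-antichain $E_n\subseteq F_n$ carrying a fixed fraction of the mass, and then the real problem appears: making $\bigcup_n E_n$ an antichain requires controlling comparabilities \emph{between} different blocks, and ``$H$ has no infinite chain'' does not obviously let a K\"onig/Ramsey thinning do this while preserving a fixed fraction of each $\|x_n\|_1$ (large portions of $F_m$ can sit above large portions of $F_n$). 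You never supply this argument, so the hard implication is not proved.

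The paper avoids block-sequence combinatorics entirely. It views $(H,\subseteq)$ as a well-founded tree, inducts on its rank, and at each stage decomposes $H\setminus\{\0\}$ into the subtrees $H_i$ above the first-level nodes; since $\mc{A}\clrest(H_1\cup H_2\cup\cdots)=\{E_1\cup E_2\cup\cdots:E_i\in\mc{A}\clrest H_i\}$, the norm splits as $\|x\|=\sum_i\|P_{H_i}(x)\|$, and a short lemma shows that such an ``$\ell_1$-sum'' of spaces with $\mrm{FIN}=\mrm{EXH}$ again satisfies $\mrm{FIN}=\mrm{EXH}$, i.e.\ contains no $c_0$. If you want to salvage your route you would essentially have to rediscover this transfinite decomposition; as written, the single-node selection and the unproved incomparability thinning leave the main implication open.
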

\begin{proof} To prove the non-trivial direction, we show that if $H\subseteq 2^{<\NN}$ is infinite and does not contain infinite chains, then $X_{\mc{A}\upharpoonright H}$ does not contain $c_0$. We can assume that the empty sequence belongs to $H$, consider the well-founded tree $(H,\subseteq)$, and define the usual rank function $\rho:H\to\mrm{On}$ on this tree, that is, $\rho$ is $0$ on the terminal nodes, and if we already know $H_{\rho<\al}=\{t\in H:\rho(t)<\al\}$, then define $\rho(s)=\al$ if $s\notin H_{\rho<\al}$ and all immediate successors of $s$ (in $H$) belong to $H_{\rho<\al}$.  This recursion gives us a full function on $H$.

We prove by induction on $\rho(\0)$ that $\mrm{EXH}(\mc{A}\clrest H)=\mrm{FIN}(\mc{A}\clrest H)$. If $\rho(\0)=1$ then the first ($=$last) level $L$ of $H$ is an infinite antichain, hence $\mc{A}\clrest H=[L]^{<\infty}\cup\{\{\0\}\}$, and so $\mrm{EXH}(\mc{A}\clrest H)=\mrm{FIN}(\mc{A}\clrest H)$ is isomorphic to $\ell_1$. At stage $\al=\rho(\0)$, let $\{s_i:i=1,2,\dots\}$ (a finite or infinite set) be an enumeration of the first level of $H$, and let $H_i=\{t\in H:t$ extends $s_i\}$. As $\rho(s_i)<\al$, $\mrm{EXH}(\mc{A}\clrest H_i)=\mrm{FIN}(\mc{A}\clrest H_i)$ for every $i$. Also, we know that $\mc{A}\clrest (H\setminus\{\0\})=$
\[ \mc{A}\clrest \big(H_1\cup H_2\cup\dots\big)=\big\{E_1\cup E_2\cup\dots:E_i\in\mc{A}\clrest H_i\,\text{ for every }\,i\big\}.\]

\begin{nnclaim}
Given pairwise disjoint nonempty countable sets $\Omega_i$, $i\in\NN$ and hereditary covers by finite sets, $\mc{E}_i$ of $\Omega_i$, let 
\[ \mc{E}=\big\{E_1\cup E_2\cup\dots:E_i\in\mc{E}_i\,\text{ for every }\,i\big\}.\]
If $X_{\mc{E}_i}$ does not contain $c_0$ for any $i$, then nor does $X_\mc{E}$.
\end{nnclaim}
\begin{proof}[Proof of the Claim] Let $\Omega=\bigcup_{i=1}^\infty\Omega_i$, $x\in\mrm{FIN}(\mc{E})$, and fix an $\eps>0$. As $\|x\|_\mc{E}=\sum_{i=1}^\infty\|P_{\Omega_i}(x)\|_{\mc{E}_i}<\infty$, we can pick an $n$ such that $\sum_{i=n}^\infty\|P_{\Omega_i}(x)\|<\eps/2$. Also, as $P_{\Omega_i}(x)\in\mrm{FIN}(\mc{E}_i)=\mrm{EXH}(\mc{E}_i)$, we can pick finite sets $B_i\subseteq\Omega_i$, $i=1,2,\dots,n$ such that $\|P_{\Omega_i\setminus B_i}(x)\|<\eps/2n$. Now, if $B=\bigcup_{i=1}^nB_i$ then $\|P_{\Omega\setminus B}(x)\|= \sum_{i=1}^n \|P_{\Omega_i\setminus B_i}(x)\|+\sum_{i=n}^\infty\|P_{\Omega_i}(x)\|<\eps$. It follows that $x\in\mrm{EXH}(\mc{E})$.
\end{proof}

Applying this Claim to $\mc{E}_i=\mc{A}\clrest H_i$, we obtain that $X_{\mc{A}\upharpoonright (H\setminus \{\0\})}$ does not contain $c_0$, hence nor does $X_{\mc{A}\upharpoonright H}$, i.e. $H\in\mc{H}(\mc{A},\neg c_0)$. 

\smallskip
Regarding complexity, fix an embedding $e:\NN^{<\NN}\to 2^{<\NN}$ (w.r.t. extensions of nodes), then $\{T\subseteq \NN^{<\NN}:T$ is a tree on $\NN\}\to\mc{P}(2^{<\NN})$, $T\mapsto e[T]$ is a continuous reduction of the $\mbf{\Pi}^1_1$-complete set of well-founded trees to $\mc{H}(\mc{A},\neg c_0)$.
\end{proof}

At the other end, regarding $\mc{H}(\mc{A},\neg\ell_1)$, we have the following result, fundamentally due to Kunen (see \cite[Lemma 2.1]{howtodrive}). 
\begin{prop}\label{A-no-l1}
\begin{align*}
\mc{H}(\mc{A},\neg\ell_1)&=\big\{H\subseteq 2^{<\NN}:H\,\text{ does not contain infinite antichains}\,\big\}\\
&=\big\{H\subseteq 2^{<\NN}:H\,\text{ can be covered by finitely many chains}\big\}\\
&=\big\{H\subseteq 2^{<\NN}:\sup\big\{|A|:A\subseteq H\,\text{ is an antichain}\big\}<\infty\big\}
\end{align*} 
is the $F_\sigma$ ideal generated by all branches in $2^{<\NN}$. 
\end{prop}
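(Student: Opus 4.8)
The plan is as follows. The first equality is a direct instance of the description of $\mc{H}(\mc{F},\neg\ell_1)$ recalled above: taking $\mc{F}=\mc{A}$ over the underlying set $2^{<\NN}$, the set $\overline{\mc{A}}\cap[2^{<\NN}]^\infty$ is exactly the family of infinite antichains, so $\mc{H}(\mc{A},\neg\ell_1)=\mc{P}(2^{<\NN})\setminus\{H:H\supseteq B\text{ for some infinite antichain }B\}$, which is the first displayed family. For the remaining two equalities I would close a short circle of inclusions. If $H=C_1\cup\cdots\cup C_k$ with each $C_i$ a chain and $A\subseteq H$ is an antichain, then $|A\cap C_i|\le1$ for each $i$, so $|A|\le k$; thus $\{H:H\text{ is a finite union of chains}\}\subseteq\{H:\sup\{|A|:A\subseteq H\text{ antichain}\}<\infty\}\subseteq\{H:H\text{ has no infinite antichain}\}$. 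It remains to prove the reverse inclusion, that a subset of $2^{<\NN}$ with no infinite antichain is a finite union of chains — this is (essentially) Kunen's Lemma, \cite[Lemma~2.1]{howtodrive}.

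For that inclusion I would argue directly. First replace $H$ by its downward closure $\widehat H=\{s\in2^{<\NN}:s\subseteq t\text{ for some }t\in H\}$; this changes neither hypothesis nor conclusion, since an infinite antichain of $\widehat H$ lifts to one of $H$ (pick, for each of its nodes, an extension in $H$; two such extensions lying below a common node would make the original nodes comparable), and a decomposition of $\widehat H$ into finitely many chains restricts to one of $H$. So assume $H$ is a subtree of $2^{<\NN}$, and call $s\in H$ a \emph{splitting node} if $s^\frown0,s^\frown1\in H$. Suppose $H$ had infinitely many splitting nodes. Their downward closure is then an infinite subtree of $2^{<\NN}$, hence by K\"onig's lemma contains a branch $b\in2^\NN$; and $b$ must pass through infinitely many splitting nodes of $H$, for otherwise, above the largest level at which $b$ splits, every node of $b$ has a unique immediate successor in $H$, which forces every splitting node extending a sufficiently long initial segment of $b$ to lie on $b$ itself — contradicting that there is no splitting node of $b$ above that level. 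Picking, at each splitting node $s$ on $b$, a node of $H$ extending the child of $s$ that is not on $b$ produces an infinite antichain of $H$ (such nodes, attached at distinct levels of $b$, are pairwise incomparable), a contradiction. Hence a subtree with no infinite antichain has only finitely many splitting nodes; above the level of its topmost splitting node it then has no splitting nodes at all, so is a disjoint union of finitely many chains, and together with its finite initial part $H$ is a finite union of chains.

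Finally, every chain in $2^{<\NN}$ is contained in a branch (take the $\subseteq$-union of its nodes — either a finite string, lying under some branch, or an element of $2^\NN$), so ``finite union of chains'' coincides with ``covered by finitely many branches'', i.e. $\mc{H}(\mc{A},\neg\ell_1)$ is precisely the ideal on $2^{<\NN}$ generated by its branches. This ideal is $F_\sigma$: it equals $\bigcup_{k\in\NN}\mc{H}_k$ where $\mc{H}_k=\{H:H\text{ contains no antichain of size }k+1\}$, an increasing sequence of sets, and each $\mc{H}_k$ is closed because its complement is the union, over all $(k+1)$-element antichains $F\subseteq2^{<\NN}$, of the basic clopen sets $\{H:F\subseteq H\}$. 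The one genuinely nontrivial point is the splitting-node argument of the middle paragraph — in particular, the verification that an infinite subtree of $2^{<\NN}$ with infinitely many splitting nodes contains an infinite antichain; everything else is bookkeeping with the description of $\mc{H}(\mc{F},\neg\ell_1)$ and elementary combinatorics of the binary tree.
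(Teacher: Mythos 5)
Your proof is correct, and the overall architecture (a cycle of inclusions among the three combinatorial descriptions, with the only substantial step being ``no infinite antichain implies a finite union of chains'') matches the paper's. The interesting difference is in how that substantial step is proved. The paper argues by contraposition on the \emph{chain} side: assuming $H$ (viewed as a tree) cannot be covered by finitely many chains, it recursively selects $h_1\subseteq h_2\subseteq\cdots$ so that the set of extensions of each $h_n$ still fails to be covered by finitely many chains (a pigeonhole over the finitely many immediate successors), and then peels off nodes $t_n$ branching away from this chain to form an infinite antichain. You instead pass to the downward closure, isolate the \emph{splitting nodes}, apply K\"onig's lemma to their downward closure to find a branch through infinitely many of them, and read off the antichain from the off-branch children; finitely many splitting nodes then immediately yields a decomposition into finitely many chains. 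The two arguments are close in spirit (both locate a branch along which some ``badness'' persists), but yours makes the combinatorial mechanism fully explicit and self-contained, including the reduction to downward-closed sets and the verification that the constructed nodes are pairwise incomparable --- details the paper leaves somewhat compressed. You are also more explicit than the paper on two routine points: the first equality (which you get directly from the displayed description of $\mc{H}(\mc{F},\neg\ell_1)$ via $\overline{\mc{A}}\cap[2^{<\NN}]^\infty=\{$infinite antichains$\}$, rather than by two separate Banach-space inclusions) and the $F_\sigma$ claim, for which you exhibit the closed pieces $\mc{H}_k$ explicitly. No gaps.
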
 
\begin{proof}

Let us denote the families on the right by $\mc{H}_1$, $\mc{H}_2$, and $\mc{H}_3$. Obviosuly, $\mc{H}_2$ is the ideal generated by all branches and $\mc{H}_3$ is $F_\sigma$. Also, $\mc{H}_2\subseteq\mc{H}_3\subseteq\mc{H}_1$ clearly holds. We need to show that $\mc{H}(\mc{A},\neg\ell_1)\subseteq \mc{H}_1\subseteq\mc{H}_2\subseteq\mc{H}(\mc{A},\neg\ell_1)$. The first inclusion is trivial. To see the last one, if $H$ can be covered by $n$ many chains, then $\mc{A}\clrest H\subseteq [H]^{\leq n}$, therefore, $(e_n)_{n\in H}$ is equivalent to the basis of $c_0$.

\smallskip
$\mc{H}_1\subseteq\mc{H}_2$: Assuming that $H$ cannot be covered by finitely many chains, we show that $H$ contains an infinite antichain. 
Let us consider $H$ a tree again (adding $\0$ if necessary). We can assume that its levels are finite. By recursion we can pick $\0=h_1\subseteq h_2\subseteq \dots$ from $H$, $h_n$ is from the $n$th level such that $\{t\in H:t$ extends $h_n\}$ cannot be covered by finitely many chains. Now consider the chain $\{h_n:n=1,2,\dots\}$, then for each $n$ we can pick a $t_n\in H$ extending $h_n$ not covered by this chain. It follows that $\{t_n:n=1,2,\dots\}\subseteq H$ is an infinite antichain.
\end{proof}

If $\mc{C}=\{F\subseteq 2^{<\NN}:F$ is a finite chain$\}$ from Example \ref{exa-chains}, then, applying Proposition \ref{A-no-l1}, it follows easily that 
\[ \mc{H}(\mc{C},\neg c_0)=\mc{H}(\mc{A},\neg\ell_1)=\big\{H\subseteq 2^{<\NN}:X_{\mc{C}\upharpoonright H}\simeq\ell_1(H)\big\}.\] 
Also, $\mc{H}(\mc{C},\neg\ell_1)=\mc{H}(\mc{A},\neg c_0)$ follows from Proposition \ref{A-no-c0} and the fact that if there are copies of $\ell_1$ in $X_\mc{F}$, then there are trivial ones. 

\section{Further questions}\label{quesec}

Consider $\mc{F}\in\mrm{FHC}$. Obviously, $\mrm{conv}(W(\max(\overline{\mc{F}})))\supseteq W(\overline{\mc{F}})$, therefore, regarding convex and linear hulls of $\mrm{Ext}(B(X^*_\mc{F}))=W(\max(\overline{\mc{F}}))$, we can work with $W(\overline{\mc{F}})$ instead. The first question is motivated by the fact (see \cite[Corollary 4.6]{ExtremeKevin}) that, assuming $\mc{F}$ is compact, $X^*_\mc{F}$ satisfies the $\lam$-property (i.e. the CSRP). 

\begin{que} 
Does $\overline{\mrm{span}}(W(\overline{\mc{F}}))=X^*_\mc{F}$, or even $\overline{\mrm{conv}}(W(\overline{\mc{F}}))=B(X^*_\mc{F})$, always hold? If so, then we may go further: Does $X^*_\mc{F}$ always satisfy the $\lam$-property?  
\end{que} 

In general, $c_0$-saturation of a Banach space does not imply $\ell_1$-saturation of its dual (see \cite{Leung}). However, the question is still open for the case of combinatorial Banach spaces, that is, for combinatorial spaces generated by
compact families. Note that every compact family $\mc{F}\in\mrm{FHC}$ admits a quasinorm $\|\bullet\|^\mc{F}$ on $c_{00}$ with the completion of $(c_{00},\|\bullet\|^\mc{F})$ being $\ell_1$-saturated and generating the dual space $X_\mc{F}^*$ as its Banach envelope (\cite{quasiJachimek}). 

\begin{que}
Assume $X_\mc{F}$ is $c_0$-saturated, i.e. $\mc{F}$ is compact. Is $X_\mc{F}^*$ $\ell_1$-saturated? 
\end{que}

There are obvious questions regarding missing implications from Section \ref{towards}, also, there is a very natural variant of ($\neg c_0$) we have not mentioned yet. 

\begin{que} 
Assume that $X_\mc{F}$ does not contain $c_0$, i.e. ($\neg c_0$) holds:
\[\tag{$\neg c_0$} \forall\text{ $\mc{F}$-supp. norm. bl. basic }\,(x_n)\,\text{ in }\,X_\mc{F}\;\sup_{A\in\overline{\mc{F}}}\sum_{n=1}^\infty\|P_A(x_n)\|_\mc{F}=\infty.\]
Does this imply that the formally stronger 
\[\tag{$\Sigma$} \forall\text{ $\mc{F}$-supp. norm. bl. basic }\,(x_n)\,\text{ in }\,X_\mc{F}\;\exists\,A\in\overline{\mc{F}}\;\,\sum_{n=1}^\infty\|P_A(x_n)\|_1=\infty\]
also hold? What can we say about its uniform version, 
\[\tag{$U_\Sigma$} \forall\text{ pairwise disjoint }(F_n)\text{ in }\mc{F}\setminus\{\0\}\;\exists\,A\in\overline{\mc{F}}\;\,\sum_{n=1}^\infty\frac{|A\cap F_n|}{|F_n|}=\infty,\]
e.g. does ($U_{\neg c_0}$) imply ($U_\Sigma$)?
\end{que} 

\begin{que} In the realm of combinatorial spaces,
(a) does the Schur property (i.e. ($S$)) imply ($S^*$), or (b) does $(U_{\neg c_0})$ imply nowhere compactness?
\end{que}

We already mentioned that everywhere perfectness feels odd mostly because it is not invariant under the equivalence of families (that is, of the generated norms). We know that our properties do not imply everywhere perfectness, not even ($S^*$), the
strongest among them. But one may wonder if the following is true:

\begin{que}
Assume $\mc{F}$ satisfies ($S^*$). Is $\mc{F}$ equivalent to an everywhere perfect family?
\end{que}

Also, the family $\mrm{iFh}(\mc{Q})$ in Subsection \ref{farah-families-1} may suggest that although the spaces induced by everywhere perfect families may not be $\ell_1$-saturated, perhaps they can be decomposed into an '$\ell_1$-saturated part' and
a '$c_0$-saturated part' - on the level of the family or on the level of the space. E.g. we may ask the
following question.

\begin{que} Suppose that $\mc{F}$ is an everywhere perfect family. Do there exist an $\ell_1$-saturated Banach space $X$ and a $c_0$-saturated Banach space $Y$ such that $X_\mc{F}$ is isomorphic to $X \oplus Y$?
\end{que}

We finish with questions concerning the ideals considered in Section \ref{J ideals}.

\begin{que} It is easy to see that if $H\subseteq 2^{<\NN}$ then the minimal number of antichains covering $H$ is equal to the supremum of the lengths of chains through $H$. (This is basically Mirsky's theorem / dual Dilworth theorem for $H\subseteq 2^{<\NN}$.) It follows that
\[ \mc{H}=\big\{H\subseteq 2^{<\NN}:H\,\text{ can be covered by finitely many antichains}\big\}\] is an $F_\sigma$ ideal, and it is strictly smaller than $\mc{H}(\mc{A},\neg c_0)$. Is $\mc{H}$ of the form $\mc{H}(\mc{F},\neg c_0)$ for some $\mc{F}$? 
\end{que} 

\begin{que} Can we characterize which ideals exactly are of the form $\mc{H}(\mc{F},\neg c_0)$ (or $\mc{H}(\mc{F},\neg\ell_1)$, etc)?
\end{que}

\bibliographystyle{siam}
\bibliography{bib-norm}
\end{document}